\documentclass{article}
\usepackage{arxiv}

\usepackage[utf8]{inputenc}
\usepackage[T1]{fontenc}
\usepackage{cite}
\usepackage{hyperref}
\hypersetup{
  colorlinks=true,
  linkcolor=blue,
  citecolor=blue,
  urlcolor=blue,
  pdftitle={Noncommutative Anisotropic Diffusion in Hilbert Space. II. Global Closure of the Logarithmic Gradient, Lower Bounds, and Nanosystem Applications},
  pdfauthor={E. Yu. Shchetinin and A. A. Shevchuk and S. I. Salpagarov}
}
\usepackage{url}
\usepackage{amsmath,amssymb,amsthm,amsfonts}
\usepackage{nicefrac}
\usepackage{microtype}
\usepackage{graphicx}

\newtheorem{theorem}{Theorem}[section]
\newtheorem{lemma}[theorem]{Lemma}
\newtheorem{proposition}[theorem]{Proposition}
\newtheorem{corollary}[theorem]{Corollary}
\newtheorem{assumption}[theorem]{Assumption}
\newtheorem{definition}[theorem]{Definition}
\newtheorem{remark}[theorem]{Remark}

\newcommand{\Hh}{\mathcal H}
\newcommand{\LL}{\mathcal L}

\newcommand{\LLtwo}{\mathcal L_2}
\newcommand{\E}{\mathbb E}

\newcommand{\Tr}{\operatorname{Tr}}
\newcommand{\Ent}{\mathrm H}
\newcommand{\Eval}{\mathcal L_{\mathrm{val}}}

\newcommand{\Asf}{\mathsf A}

\newcommand{\subjclass}[2][2020]{%
  \par\addvspace\medskipamount
  {\small\textit{2020 Mathematics Subject Classification.} #2\par}}

\title{Noncommutative Anisotropic Diffusion in Hilbert Space.\\
       II. Global Closure of the Logarithmic Gradient, Lower Bounds,
       and Nanosystem Applications}

\author{%
  E.\,Yu.~Shchetinin \\
  Sevastopol State University \\
  Sevastopol, Russia \\
  \texttt{riviera-molto@mail.ru}
  \And
  A.\,A.~Shevchuk \\
  Sevastopol State University \\
  Sevastopol, Russia \\
  \texttt{andreiluck11@yandex.ru}
  \AND
  S.\,I.~Salpagarov \\
  RUDN University \\
  Moscow, Russia \\
  \texttt{salpagarov-si@rudn.ru}
}

\begin{document}
\maketitle

% -- Abstract --------------------------------------------------------------
\begin{abstract}
This second part of the series develops the statistical and applied layer of the theory built in Part~I~\cite{ShchetininPartI}. Unlike current Hilbert diffusion models \cite{LimYoon2023,Pidstrigach2024,Hagemann2025,FranzeseMichiardi2025}, we focus not only on well-posedness of the infinite-dimensional generative dynamics, but on the noncommutative \(A\)-geometry, explicit entropy constants, and checkable lower bounds. The analytic estimate of Part~I reduces stability of the backward evolution to control of a validation error \(\Eval\). We prove three results. First, we construct a cylindrically weak denoising label for the infinite-dimensional score field, consistent with the exact logarithmic gradient. Second, the local parametric closure is replaced by a global nonparametric score closure, its complexity controlled via the Dudley entropy integral and uniform empirical bounds in \(L^2(\nu_*;A)\). Third, for the trace-smoothed nonlinear class we construct minimax bounds by the Le Cam--Assouad method, showing the root statistical rate cannot be improved without additional quadratic structure. The final section applies the theory to anisotropic diffusion in a nanosystem model and checks the constants \(c_A,C_A,C_{\mathrm{LSI}}^A\) independently of any smallness condition, establishing explicit accuracy orders: the parametric trace-smoothed minimax score risk is \(p/M\), while the uniform error of the risk functional is \(\sqrt{p/M}\), showing the statistical plateau of order \(p/M\) is not a proof artifact. The applied layer closes with an independent analytic benchmark for the isotropic case, a comparison with classical cell homogenization, and an approximation theorem for smooth logarithmic gradients by \(A\)-adapted spectral networks.
\end{abstract}

\keywords{noncommutative anisotropic diffusion \and score-based diffusion \and
  denoising score \and validation error \and Dudley entropy integral \and
  minimax lower bound \and logarithmic Sobolev inequality \and homogenization
  \and neural-network approximation}

% -- MSC 2020 --------------------------------------------------------------
% 60H15 Stochastic partial differential equations
% 62G20 Asymptotic properties of nonparametric inference
% 62C20 Minimax procedures in statistical decision theory
% 31C25 Dirichlet forms
% 35B27 Homogenization, oscillations in PDE
\subjclass[2020]{60H15, 62G20, 62C20, 31C25, 35B27}

\section{Introduction}
Generative diffusion models have recently been carried over from
finite-dimensional data to functions and probability measures on
infinite-dimensional Hilbert spaces, where the forward noising and the backward
denoising are driven by an operator-valued anisotropic diffusion. When the
diffusion tensor \(D(x)\) does not commute with the covariance \(Q\) of the
driving noise, the error of the backward evolution can no longer be measured in
the ambient Hilbert norm, and the natural setting for its analysis is the
noncommutative \(A\)-geometry associated with the pair \((D,Q)\).
Part~I~\cite{ShchetininPartI} built the functional-analytic foundation of this
geometry --- the consistent \(A\)-form, its Mosco stability, and the weak bridge
--- and reduced the stability of the backward evolution to the control of a
single validation error \(\Eval\). The present part takes this reduction as its
starting point and develops the statistical and applied layer of the theory.

Both parts draw on the same groups of sources: classical homogenization \cite{Allaire,CioranescuDonato}, modern score-based and infinite-dimensional diffusion models \cite{Song,Pidstrigach2024,Hagemann2025}, and standard empirical-process bounds \cite{Wainwright,vanDerVaartWellner}.

Unlike Pidstrigach et al. \cite{Pidstrigach2024}, who focus on the
infinite-dimensional well-posedness of diffusion models and on
discretization-invariant limits, the present part studies the statistical
closure of the error in the noncommutative \(A\)-geometry. Unlike
Hagemann et al.~\cite{Hagemann2025}, who focus on functional score-based models,
here we additionally prove minimax lower bounds and verify the operator constants
for an anisotropic nanosystem model. The statistical result is therefore not a
direct consequence of existing Hilbert-space diffusion frameworks.

Part~I~\cite{ShchetininPartI} provides a general entropy estimate:
\[
        \frac{d}{dt}\Ent(\rho_t\|\hat\rho_t)
        \le
        -c\beta(t)\Ent(\rho_t\|\hat\rho_t)+C\beta(t)\Eval(t).
\]
The statistical task is thus to close \(\Eval\) through the dissipative term or
through a small empirical plateau. The present part has three layers: the score
from noised data, the empirical closure, and the lower bounds.

\section{The Analytic Input from Part~I}

\begin{assumption}[Analytic basis from Part~I]
\label{ass:part1-base}
On the time interval under consideration the conditions of the analytic theorem
of Part~I hold: global \(A\)-compatibility, the \(A\)-LSI for the approximate
measure \(\hat\nu_t=\hat\rho_t\mu_0\), weighted closedness for \(\rho_t\mu_0\)
in the sense of Dirichlet forms \cite{MaRockner,FukushimaOshimaTakeda},
the existence of a weak bridge, the chain rule for the relative entropy, and the
drift-error condition. In particular,
\[
        \frac{d}{dt}\Ent(\rho_t\|\hat\rho_t)
        \le
        -\frac14\beta(t)J_t+\frac14\beta(t)\Eval(t),
\]
where
\[
        J_t=\int\Gamma_A(h_t,h_t)\rho_t\,d\mu_0,
        \qquad h_t=\log(\rho_t/\hat\rho_t),
\]
and
\[
        J_t\ge (2C_{\mathrm{LSI}}^A)^{-1}\Ent(\rho_t\|\hat\rho_t).
\]
\end{assumption}

The statistical layer must guarantee
\[
        \Eval(t)\le \widetilde q^{\,2}J_t+\Delta_M,
        \qquad \widetilde q<1.
\]

\section{A Cylindrically Weak Denoising Label}

Let
\[
        Y=X+\sqrt{\varepsilon}\xi,\qquad \xi\sim\mathcal N(0,Q).
\]
The formal quantity \(Q_\varepsilon^{-1}(Y-X)\) is not well defined as an element
of \(\Hh\) in infinite dimensions \cite{DaPratoZabczyk,BogachevGaussian}. The label is therefore defined cylindrically.

\begin{definition}[Denoising label]
For \(P_n\Hh\) set
\[
        \widehat s_{\varepsilon,n}(Y)
        =
        \E[-(\varepsilon Q_n)^{-1}P_n(Y-X)\mid P_nY],
        \qquad Q_n=P_nQP_n.
\]
\end{definition}

\begin{theorem}[Consistency of the denoising label]
\label{thm:denoise}
Let \(s_*=\nabla\log\rho_*\) and
\[
        \nabla_A^2\log\rho_*\in L^2(\nu_*;\LLtwo^A).
\]
Then
\[
        \E\|\widehat s_{\varepsilon,n}-P_ns_*\|_{A}^2
        \le C\varepsilon+r_n^2,
\]
where \(r_n\to0\). If \(\sum_{k>n}q_k=O(n^{-\gamma})\), then
\(r_n\le Cn^{-\gamma/2}\).
\end{theorem}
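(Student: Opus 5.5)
The approach is Tweedie's formula applied on the finite-dimensional projection, followed by a heat-semigroup comparison and a tail estimate. Set \(\rho_\varepsilon\) for the law of \(Y=X+\sqrt\varepsilon\xi\) and \(\rho_{\varepsilon,n}\) for the law of \(P_nY\) on \(P_n\Hh\). Since \(P_n\xi\sim\mathcal N(0,Q_n)\) is nondegenerate on \(P_n\Hh\) (assuming \(q_1,\dots,q_n>0\)), the conditional expectation defining \(\widehat s_{\varepsilon,n}\) is given by the finite-dimensional Tweedie identity:
\[
\widehat s_{\varepsilon,n}(Y)=\nabla_n\log\rho_{\varepsilon,n}(P_nY),
\]
with the gradient taken on \(P_n\Hh\). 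To prove it, differentiate the Gaussian convolution \(\rho_{\varepsilon,n}(y)=\int \varphi_{\varepsilon Q_n}(y-P_nx)\,\rho_*(dx)\) under the integral.

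The next step uses the tower property to split the error into
\[
\widehat s_{\varepsilon,n}-P_ns_*=\bigl(\nabla_n\log\rho_{\varepsilon,n}-\E[P_ns_*(X)\mid P_nY]\bigr)+\bigl(\E[P_ns_*(X)\mid P_nY]-P_ns_*(X)\bigr).
\]
\begin{itemize}
\item The first term vanishes if the score of the \(n\)-marginal equals the conditional projected score. This holds because the marginal score satisfies \(\nabla_n\log\rho_{*,n}(P_nx)=\E[P_ns_*\mid P_nX]\), and the Gaussian smoothing commutes with this identity. The residual discrepancy between the projected score of the full measure and the score of the marginal is pushed into \(r_n\).
\item The second term is a conditional-variance term. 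Its \(A\)-norm square is bounded by \(\E\|P_ns_*(X)-P_ns_*(X+\text{shift})\|_A^2\) along a Gaussian coupling. By the mean value theorem in the direction of the noise, it is controlled by \(\varepsilon\,\E\|\nabla_A^2\log\rho_*\|^2_{\LLtwo^A}\E\|\xi\|^2\), which is finite since \(\Tr Q<\infty\). This is where the hypothesis \(\nabla_A^2\log\rho_*\in L^2(\nu_*;\LLtwo^A)\) enters.
\end{itemize}
Together these give \(C\varepsilon\), with \(C\) proportional to \(\Tr Q\,\|\nabla_A^2\log\rho_*\|^2_{L^2(\nu_*;\LLtwo^A)}\) times the norm-equivalence constant \(C_A\) from Part~I.

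The remaining contribution \(r_n^2\) collects the dependence of the conditional expectation on the discarded coordinates \((I-P_n)X\) and \((I-P_n)\xi\). Conditioning only on \(P_nY\) loses the information in the tail, so I will bound the difference by the Hessian cross terms against the tail noise, \(\E\|(I-P_n)\sqrt{\varepsilon}\xi\|^2\le\varepsilon\sum_{k>n}q_k\), and by the tail of the Hilbert–Schmidt norm of \(\nabla^2_A\log\rho_*\) on \((I-P_n)\Hh\). Both tend to zero by dominated convergence. Under the tail hypothesis \(\sum_{k>n}q_k=O(n^{-\gamma})\), Cauchy–Schwarz with the square-integrable Hessian gives \(r_n^2\le Cn^{-\gamma}\), that is, \(r_n\le Cn^{-\gamma/2}\).

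I expect the main obstacle to be making this tail step rigorous: identifying \(\nabla_n\log\rho_{*,n}\) with \(\E[P_ns_*\mid P_nX]\), so that the first term really vanishes, requires an integration-by-parts (quasi-invariance) argument for \(\rho_*\mu_0\) along directions in \(P_n\Hh\). My first attempt will be to justify it via the weighted closedness of the Dirichlet form in Assumption~\ref{ass:part1-base}, approximating by cylindrical test functions.
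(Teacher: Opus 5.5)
Your skeleton (Tweedie's identity on \(P_n\Hh\), an \(O(\varepsilon)\) term controlled by second derivatives, and a projection tail) is the same as the paper's three-line sketch. The steps you add to make it rigorous, however, do not go through as written.

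The first problem is your claim that the first term of the splitting vanishes. Here \(\nu_*=\rho_*\mu_0\) with \(\mu_0\) Gaussian, so \(s_*=\nabla\log\rho_*\) is the gradient of a density relative to \(\mu_0\). Tweedie's identity, by contrast, gives the \emph{Lebesgue} score of the \(n\)-dimensional marginal of the noised law. The integration by parts for \(\rho_*\mu_0\) along \(e_k\in P_n\Hh\) that you intend to use yields the full logarithmic derivative \(P_ns_*+\beta_n\). Here \(\beta_n\) is the score of the \(n\)-marginal of \(\mu_0\); for example, \(\beta_n(x)=-C_n^{-1}P_nx\) when \(\mu_0=\mathcal N(0,C)\) with \(C\) reduced by \(P_n\). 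Consequently your first term equals \(\E[\beta_n(P_nX)\mid P_nY]\). This is a linear Gaussian drift whose \(A\)-norm does not tend to zero as \(\varepsilon\to0\), \(n\to\infty\), so it can neither vanish nor be absorbed into \(r_n\). You must either read \(s_*\) as the full logarithmic derivative of \(\nu_*\) along \(P_n\Hh\), or correct the label by this explicit Gaussian term. The paper's sketch passes over the same point, so the convention has to be fixed before the argument can start.

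The second problem concerns the \(\varepsilon\)- and tail estimates.
\begin{itemize}
\item The \(L^2\)-minimality of conditional expectation only admits comparators that are functions of \(P_nY\). Your comparator \(P_ns_*(X+\text{shift})\) still depends on \((I-P_n)X\), so it is not admissible.
\item The mean-value step evaluates \(\nabla_A^2\log\rho_*\) at \(X+t\sqrt\varepsilon\,\xi\), whose law is not \(\nu_*\). When \(\mu_0\) is the \(Q\)-Gaussian it is even singular to \(\nu_*\), by Feldman--H\'ajek. So the hypothesis \(\nabla_A^2\log\rho_*\in L^2(\nu_*;\LLtwo^A)\) says nothing at those points.
\item The factorization \(\E\|\nabla_A^2\log\rho_*\|^2\,\E\|\xi\|^2\) is unjustified, because the Hessian is evaluated at a \(\xi\)-dependent point.
\item The genuine \(\varepsilon\)-independent tail term is \(\E\|P_ns_*(X)-\E[P_ns_*(X)\mid P_nX]\|_A^2\). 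This is a conditional variance over the discarded coordinates, and Cauchy--Schwarz cannot bound it by the Hessian, let alone with a rate.
\item The tail noise \((I-P_n)\xi\) never enters the label, so your cross terms against it are spurious.
\end{itemize}

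What is missing is a functional inequality for the posterior law of \(X\) given \(P_nY\). Under log-concavity as in Proposition~\ref{prop:nano}, this posterior has curvature at least \((\varepsilon Q_n)^{-1}\) on \(P_n\Hh\). A Brascamp--Lieb (Poincar\'e) inequality then bounds \(\E\,\mathrm{Var}(P_ns_*(X)\mid P_nY)\) by \(\nu_*\)-expectations of the Hessian blocks. The weights are \(\varepsilon Q_n\) on \(P_n\Hh\) and the posterior covariance on \((I-P_n)\Hh\). This yields \(C\varepsilon\) plus a tail \(r_n^2\) from the \(L^2(\nu_*)\) hypothesis alone. The rate \(n^{-\gamma}\) then comes from comparing the tail weights with \(\sum_{k>n}q_k\). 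Without such an inequality, or a pointwise bound on the Hessian, neither the \(O(\varepsilon)\) term nor the rate for \(r_n\) is established.

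Incidentally, your identification of \(r_n\) with the information lost by conditioning only on \(P_nY\) is more accurate than the paper's ``projection tail \(P_ns_*-s_*\)'', since the left-hand side already contains \(P_ns_*\).
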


\begin{proof}
In finite dimensions this is Tweedie's identity for Gaussian noising, the basis
of denoising score matching \cite{Hyvarinen,Vincent}: the
conditional mean of the noise equals the score of the noised density. Expanding
in \(\varepsilon\) gives an error of order \(O(\varepsilon)\) under \(L^2\)-control
of the second score layer. The remainder \(r_n\) is the projection tail
\(P_ns_*-s_*\) in the \(A\)-norm.
\end{proof}

\section{The Global Score Class and the Entropy Integral}

Let \(\mathcal S\) be the class of admissible score fields with the metric
\[
        d_A(s,s')=\|s-s'\|_{L^2(\nu_*;A)}.
\]
Define the entropy integral
\[
        \mathfrak D(\mathcal S)
        =
        \int_0^{\operatorname{diam}\mathcal S}
        \sqrt{\log N(u,\mathcal S,d_A)}\,du.
\]

\begin{assumption}[Complexity of the score class]
\label{ass:globalS}
\(\mathfrak D(\mathcal S)<\infty\), and the coordinates of the gradient remainder
share a common subexponential norm \(K\).
\end{assumption}

\begin{theorem}[Global uniform risk bound]
\label{thm:global-emp}
Under Assumption~\ref{ass:globalS}, with probability at least \(1-\delta\),
\[
        \sup_{s\in\mathcal S}
        |\widehat{\mathcal R}_{M,\varepsilon}(s)-\mathcal R(s)|
        \le
        C\frac{\mathfrak D(\mathcal S)+\sqrt{\log(1/\delta)}}{\sqrt M}
        +C\varepsilon.
\]
If, in addition, the risk is locally strongly convex in a neighborhood of the
minimizer, then the contribution of this bound to the entropy plateau is of order
\[
        C\frac{\mathfrak D(\mathcal S)^2+\log(1/\delta)}{M}+C\varepsilon.
\]
\end{theorem}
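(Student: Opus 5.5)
We split the deviation into a statistical part and a bias part. Write \(\mathcal R_\varepsilon(s)=\E\,\widehat{\mathcal R}_{M,\varepsilon}(s)\) for the population denoising risk built from the cylindrical label \(\widehat s_{\varepsilon,n}\). Then
\[
        \sup_{s\in\mathcal S}|\widehat{\mathcal R}_{M,\varepsilon}(s)-\mathcal R(s)|
        \le
        \sup_{s\in\mathcal S}|\widehat{\mathcal R}_{M,\varepsilon}(s)-\mathcal R_\varepsilon(s)|
        +\sup_{s\in\mathcal S}|\mathcal R_\varepsilon(s)-\mathcal R(s)|.
\]
For the bias term we expand \(\|s-\widehat s_{\varepsilon,n}\|_A^2\) around \(\|s-P_ns_*\|_A^2\). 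The difference of the two risks is then controlled by \(\E\|\widehat s_{\varepsilon,n}-P_ns_*\|_A^2\) plus a cross term. The cross term is linear in \(s\), and its conditional expectation vanishes to leading order by Tweedie's identity. Theorem~\ref{thm:denoise} then gives \(C\varepsilon\) uniformly over \(\mathcal S\), once \(n\) is chosen large enough that \(r_n^2\le\varepsilon\) (the cylindrical approximation error is absorbed this way). Uniformity in \(s\) uses only that \(\operatorname{diam}\mathcal S<\infty\), which follows from \(\mathfrak D(\mathcal S)<\infty\).

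For the statistical term we consider the centered empirical process \(Z(s)=\widehat{\mathcal R}_{M,\varepsilon}(s)-\mathcal R_\varepsilon(s)\), which is an average of \(M\) i.i.d. centered summands \(\ell_s(X_i,Y_i)\). The key step is an increment bound. Write
\[
        \ell_s-\ell_{s'}=\langle s-s',\,s+s'-2\widehat s\rangle_A .
\]
By the Cauchy--Schwarz inequality in the \(A\)-inner product, together with the common subexponential norm \(K\) of the gradient-remainder coordinates in Assumption~\ref{ass:globalS}, the increment \(\ell_s-\ell_{s'}\) is subexponential with \(\psi_1\)-norm at most \(CK\,d_A(s,s')\). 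Bernstein's inequality then makes \(Z\) a process with mixed sub-Gaussian/subexponential increments with respect to \(d_A/\sqrt M\) and \(d_A/M\). Generic chaining, or Dudley chaining with Bernstein-type tails as in \cite{vanDerVaartWellner,Wainwright}, gives
\[
        \E\sup_{s}|Z(s)-Z(s_0)|\lesssim \frac{\mathfrak D(\mathcal S)}{\sqrt M}+\frac{\int_0^{\operatorname{diam}\mathcal S}\log N(u,\mathcal S,d_A)\,du}{M}.
\]
We then use a concentration step (a Talagrand/Adamczak-type inequality, or the tail version of chaining) to pass to a bound holding with probability \(1-\delta\), which adds \(\sqrt{\log(1/\delta)/M}\). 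The single point \(s_0\) is handled by Bernstein's inequality directly.

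\emph{Main obstacle.} The subexponential tails produce the second, \(1/M\)-order entropy term with \(\log N\) rather than \(\sqrt{\log N}\). I would handle this in one of two ways. The first option is to truncate \(\widehat s\) at level \(K\log M\) and bound the truncation error by the tail. The second option is to note that for \(M\ge \mathfrak D(\mathcal S)^2\) the \(1/M\) term is dominated by the \(1/\sqrt M\) term, and that in the complementary regime the claimed bound is trivial after enlarging \(C\), since the risks are bounded by \(\operatorname{diam}^2\) plus moments. I expect this step to require care.

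For the second claim, let \(\hat s\) be the empirical minimizer and \(s^\dagger\) the population minimizer. Local strong convexity gives
\[
        \kappa\,d_A(\hat s,s^\dagger)^2\le \mathcal R(\hat s)-\mathcal R(s^\dagger)\le [Z(s^\dagger)-Z(\hat s)]+C\varepsilon .
\]
We then apply the localized version of the chaining bound on the ball \(\{d_A(s,s^\dagger)\le r\}\); there the increments scale with \(r\), which gives \(Z\)-fluctuations of order \(r(\mathfrak D+\sqrt{\log(1/\delta)})/\sqrt M\). Solving the fixed-point inequality
\[
        \kappa r^2\lesssim r\,\frac{\mathfrak D+\sqrt{\log(1/\delta)}}{\sqrt M}+\varepsilon
\]
yields \(r^2\lesssim(\mathfrak D^2+\log(1/\delta))/M+\varepsilon\). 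This excess risk is the contribution to \(\Delta_M\) in the entropy inequality of Assumption~\ref{ass:part1-base}.
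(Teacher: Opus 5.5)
\paragraph{Overall structure.} Your overall plan matches the paper's: the empirical process \(\mathbb G_M\), truncation at \(K\log M\), Dudley chaining and concentration, then strong convexity to reach the \(M^{-1}\) plateau. You also correctly spot the Bernstein \(\log N\) term that the paper hides under ``standard truncation''. However, several of the steps you add to fill in the sketch do not work as stated.

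\paragraph{The bias term.} The paper never justifies the \(C\varepsilon\) term, and your justification fails. The cross term \(2\E\langle s-P_ns_*,\,P_ns_*-\widehat s_{\varepsilon,n}\rangle_A\) is not made to vanish by Tweedie's identity. Tweedie only identifies \(\widehat s_{\varepsilon,n}\) with the score of the noised law. The noised-minus-clean discrepancy \(\widehat s_{\varepsilon,n}-P_ns_*\) is exactly what Theorem~\ref{thm:denoise} controls, and only in mean square. Cauchy--Schwarz therefore gives \(O(\sqrt{\varepsilon}+r_n)\) uniformly over \(\mathcal S\), i.e.\ \(O(\sqrt\varepsilon)\) even after imposing \(r_n^2\le\varepsilon\), since a linear term only sees \(r_n\). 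To get \(C\varepsilon\) you need a first-order bound \(\|\widehat s_{\varepsilon,n}-P_ns_*\|_{L^2(\nu_*;A)}=O(\varepsilon)\), which requires more regularity than \(\nabla_A^2\log\rho_*\in L^2\). Separately, comparing \(s\) evaluated at \(Y\) with \(s\) integrated against \(\nu_*\) needs regularity of \(s\) uniform over \(\mathcal S\), not just \(\operatorname{diam}\mathcal S<\infty\).

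\paragraph{Option (b) fails.} By Cauchy--Schwarz, \(\mathfrak D(\mathcal S)^2\le\operatorname{diam}(\mathcal S)\int_0^{\operatorname{diam}\mathcal S}\log N(u,\mathcal S,d_A)\,du\), which is the reverse of what you need. For \(\log N\asymp u^{-\alpha}\) with \(1\le\alpha<2\), the range allowed in Theorem~\ref{thm:nonparametric-score-closure}, the Bernstein term \(\int\log N\,du\) is infinite while \(\mathfrak D<\infty\). So only the truncation route, which is the paper's, survives.

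\paragraph{The increment bound.} Even under truncation, \(\|\ell_s-\ell_{s'}\|_{\psi_1}\le CK\,d_A(s,s')\) does not follow from pointwise Cauchy--Schwarz. The product bound \(\|UV\|_{\psi_1}\le\|U\|_{\psi_2}\|V\|_{\psi_2}\) requires \(\psi_2\)-control (or sup-norm control) of \(\|s-s'\|_A\) by \(d_A\). But \(d_A\) is only an \(L^2\) norm, so you need an extra \(L^2\)--\(\psi_2\) equivalence on \(\mathcal S-\mathcal S\).

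\paragraph{The localization step.} You claim fluctuations of order \(r(\mathfrak D+\sqrt{\log(1/\delta)})/\sqrt M\) on \(B_r=\{s\in\mathcal S:\ d_A(s,s^\dagger)\le r\}\). This holds only if the local entropy is scale invariant, \(\log N(u,B_r,d_A)\lesssim p\log(3r/u)\), i.e.\ a parametric-type class. For a general class the localized Dudley integral is \(\int_0^{2r}\sqrt{\log N(u)}\,du\). When \(\log N(u)\asymp u^{-\alpha}\) this is \(\asymp r^{1-\alpha/2}\), and the fixed point gives \(r^2\asymp M^{-2/(2+\alpha)}\), not \(M^{-1}\). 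The paper's sketch makes the same leap (``controlled by the square of the gradient remainder''). The missing ingredient in both is therefore a parametric-type local-entropy hypothesis; without it, the \((\mathfrak D^2+\log(1/\delta))/M\) order does not follow from finite \(\mathfrak D\) and local strong convexity alone.
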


\begin{proof}
Set
\[
        \mathbb G_M(s)
        =
        M^{-1/2}\sum_{i=1}^M
        \left(\ell_s(X_i)-\mathbb E\ell_s(X_i)\right),
\]
where \(\ell_s\) is the contribution of a single observation to the risk. After
the standard truncation at level \(K\log M\) the class becomes subgaussian with
respect to the metric \(d_A\). Dudley's bound gives
\[
        \mathbb E\sup_{s\in\mathcal S}|\mathbb G_M(s)|
        \le
        C\mathfrak D(\mathcal S).
\]
Concentration for subexponential empirical processes \cite{BoucheronLugosiMassart} gives the deviation
\[
        C\sqrt{\log(1/\delta)}
\]
with probability at least \(1-\delta\). Dividing by \(\sqrt M\) gives the first
bound. The truncation contribution is controlled by the common \(\psi_1\)-norm
\(K\). If the risk is locally strongly convex, the minimizer error is controlled
by the square of the gradient remainder; the corresponding contribution to the
entropy plateau is therefore of order \(M^{-1}\) rather than \(M^{-1/2}\).
\end{proof}

\begin{theorem}[Global nonparametric score closure]
\label{thm:nonparametric-score-closure}
Suppose the class \(\mathcal S\) is not assumed to be finite-dimensional, but its
metric entropy in \(L^2(\nu_*;A)\) satisfies
\[
        \log N(u,\mathcal S,d_A)\le C_0u^{-\alpha},
        \qquad 0<\alpha<2.
\]
Then \(\mathfrak D(\mathcal S)<\infty\), and with probability at least
\(1-\delta\)
\[
        \sup_{s\in\mathcal S}
        |\widehat{\mathcal R}_{M,\varepsilon}(s)-\mathcal R(s)|
        \le
        C
        \frac{1+\sqrt{\log(1/\delta)}}{\sqrt M}
        +C\varepsilon .
\]
If, in addition, the quadratic growth condition for the risk holds,
\[
        \mathcal R(s)-\mathcal R(s_*)
        \ge
        c_{\mathcal S} d_A(s,s_*)^2
\]
in a neighborhood of the set of minimizers, then the empirical minimizer
\(\widehat s\) satisfies
\[
        d_A(\widehat s,s_*)^2
        \le
        C
        \left(
        \frac{1+\log(1/\delta)}{M}
        +\varepsilon
        \right).
\]
\end{theorem}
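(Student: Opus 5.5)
The plan is to reduce the statement to Theorem~\ref{thm:global-emp} by checking that the entropy condition forces Assumption~\ref{ass:globalS}. The subexponential-coordinate part of that assumption is taken to be inherited from the same score setting. So the only thing to verify is \(\mathfrak D(\mathcal S)<\infty\).

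\textbf{Finiteness of the entropy integral.} Set \(D=\operatorname{diam}\mathcal S\). I will first check that \(D<\infty\). The bound \(\log N(u,\mathcal S,d_A)\le C_0u^{-\alpha}\) shows that \(\mathcal S\) is totally bounded for \(d_A\), so it is covered by finitely many balls of radius \(1\), which bounds \(D\). Then I integrate directly:
\[
\mathfrak D(\mathcal S)\le \sqrt{C_0}\int_0^{D}u^{-\alpha/2}\,du=\frac{\sqrt{C_0}}{1-\alpha/2}\,D^{1-\alpha/2}.
\]
This is finite exactly because \(\alpha<2\), which is the only point where that restriction enters. The resulting number depends only on \(C_0\), \(\alpha\) and \(D\), so it can be absorbed into \(C\). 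Feeding it into the first bound of Theorem~\ref{thm:global-emp} turns the numerator \(\mathfrak D(\mathcal S)+\sqrt{\log(1/\delta)}\) into \(C(1+\sqrt{\log(1/\delta)})\), which gives the uniform deviation bound claimed.

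\textbf{Rate for the minimizer.} Here I use the standard M-estimation chain. Write \(\Delta=\sup_{s\in\mathcal S}|\widehat{\mathcal R}_{M,\varepsilon}(s)-\mathcal R(s)|\). Since \(\widehat s\) minimizes \(\widehat{\mathcal R}_{M,\varepsilon}\),
\[
\mathcal R(\widehat s)-\mathcal R(s_*)\le \bigl[\mathcal R(\widehat s)-\widehat{\mathcal R}_{M,\varepsilon}(\widehat s)\bigr]+\bigl[\widehat{\mathcal R}_{M,\varepsilon}(s_*)-\mathcal R(s_*)\bigr]\le 2\Delta .
\]
Combining this with quadratic growth yields only \(d_A(\widehat s,s_*)^2\lesssim M^{-1/2}\), which is too weak. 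To reach the rate \(1/M\), I would localize in the spirit of the second part of Theorem~\ref{thm:global-emp}. Consider the centered increments \(\mathbb G_M(s)-\mathbb G_M(s_*)\) over the local class \(\mathcal S_r=\{s:d_A(s,s_*)\le r\}\). Since the loss increment \(\ell_s-\ell_{s_*}\) is controlled in the subexponential metric by \(d_A(s,s_*)\), Dudley's bound on \(\mathcal S_r\) gives
\[
\E\sup_{\mathcal S_r}|\mathbb G_M(s)-\mathbb G_M(s_*)|\le C\int_0^{r}u^{-\alpha/2}\,du\asymp r^{1-\alpha/2}.
\]
Concentration then adds a deviation term of order \(r\sqrt{\log(1/\delta)}\). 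The fixed-point inequality \(c_{\mathcal S}r^2\le M^{-1/2}\bigl(r^{1-\alpha/2}+r\sqrt{\log(1/\delta)}\bigr)+C\varepsilon\), solved with a peeling argument over dyadic shells, gives \(r^2\lesssim M^{-2/(2+\alpha)}+\log(1/\delta)/M+\varepsilon\).

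\textbf{Main obstacle.} The delicate step is obtaining the pure parametric \(1/M\) order claimed in the statement rather than \(M^{-2/(2+\alpha)}\). For \(\alpha>0\), the localized entropy integral does not scale like \(r\), so the rate cannot come from the localization argument alone. I would first try to rely on the mechanism already used in Theorem~\ref{thm:global-emp}. There, under local strong convexity, the minimizer error is governed by the square of the gradient remainder at \(s_*\), which is a single subexponential vector with no supremum over the class. That argument would give \(d_A(\widehat s,s_*)^2\lesssim\|\nabla\widehat{\mathcal R}_{M,\varepsilon}(s_*)\|^2/c_{\mathcal S}^2\lesssim(1+\log(1/\delta))/M+\varepsilon\). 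The \(\varepsilon\) term comes from the label bias of Theorem~\ref{thm:denoise}, with \(n\to\infty\). Justifying this step in the nonparametric setting requires the quadratic growth condition to hold with a genuine local quadratic expansion of the risk. If that fails, the honest conclusion is the localized rate \(M^{-2/(2+\alpha)}\), which reduces to \(1/M\) only as \(\alpha\to0\).
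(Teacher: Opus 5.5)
Your treatment of the first assertion is the paper's own argument: \(\alpha<2\) makes the Dudley integral converge, and Theorem~\ref{thm:global-emp} then gives the \(M^{-1/2}\) uniform deviation. Your integration up to \(\operatorname{diam}\mathcal S\) is slightly more careful than the paper's \(\int_0^1\). In fact the entropy bound itself forces \(\operatorname{diam}\mathcal S\le 2(C_0/\log 2)^{1/\alpha}\), so \(C\) depends only on \(C_0\) and \(\alpha\). You are also right that the subexponential part of Assumption~\ref{ass:globalS} has to be imported, because the statement does not list it.

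For the second assertion, the paper uses the same basic inequality \(\mathcal R(\widehat s)-\mathcal R(s_*)\le 2\sup_{\mathcal S}|\widehat{\mathcal R}_{M,\varepsilon}-\mathcal R|\). It then simply asserts that ``localizing around \(s_*\) and the standard quadratic absorption'' yields order \(M^{-1}\). Your localization computation is correct, and it shows that this closing step does not deliver \(M^{-1}\). With \(\log N(u)\le C_0u^{-\alpha}\), the localized entropy integral is of order \(r^{1-\alpha/2}\). The fixed point of \(c_{\mathcal S}r^2\asymp M^{-1/2}r^{1-\alpha/2}\) is therefore \(r^2\asymp M^{-2/(2+\alpha)}\).

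The step that fails in your proposal is the fallback through the gradient remainder.

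\emph{The parametric gradient argument does not transfer.} In Theorem~\ref{thm:closure} the gradient at \(\theta_*\) is a \(p\)-dimensional empirical average, and \(|\nabla\widehat{\mathcal R}(\theta_*)|^2\lesssim (p+\log(1/\delta))/M\). The ``\(1/M\)'' there is really \(p/M\). In a nonparametric class there is no finite \(p\). The derivative of the squared-loss empirical risk at \(s_*\) in a direction \(h\) is an empirical average of \(\langle s_*-\text{label},h\rangle_A\) over the sample points. This linear functional is not bounded on \(L^2(\nu_*;A)\), since point evaluations are not \(L^2\)-continuous. Its only meaningful ``norm'' is the supremum over \(h\in(\mathcal S-s_*)\cap\{d_A\le r\}\). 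That is exactly the localized empirical process you already bounded, so you are sent back to \(M^{-2/(2+\alpha)}\).

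\emph{Extra quadratic structure does not help either.} Asking for a ``genuine local quadratic expansion'' changes nothing. If \(\mathcal R(s)=\frac12 d_A(s,s_*)^2\), as in the parametric section, quadratic growth already holds as an identity with \(c_{\mathcal S}=\frac12\).

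\emph{No proof can reach \(1/M\) under this hypothesis.} Consider classes whose entropy is also bounded below by \(cu^{-\alpha}\), for example smoothness balls in the regression formulation behind denoising score matching. For these, Yang--Barron-type minimax lower bounds are of order \(M^{-2/(2+\alpha)}\).

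The defensible conclusion is \(d_A(\widehat s,s_*)^2\le C\bigl(M^{-2/(2+\alpha)}+(1+\log(1/\delta))/M+\varepsilon\bigr)\). This reduces to the claimed rate only as \(\alpha\downarrow0\) or for finite-dimensional classes. In the finite-dimensional case it becomes \(p/M\), consistent with Theorem~\ref{thm:minimax}. You should state this rate outright rather than hedge. The weaker rate also carries over to Lemma~\ref{lem:nonparametric-residual} and to the plateau in Theorem~\ref{thm:part2-main}.
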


\begin{proof}
The condition \(\alpha<2\) implies
\[
        \mathfrak D(\mathcal S)
        \le
        C\int_0^1 u^{-\alpha/2}\,du<\infty .
\]
The first bound follows from Theorem~\ref{thm:global-emp}. For the second bound
we use the basic empirical-minimum inequality:
\[
        \mathcal R(\widehat s)-\mathcal R(s_*)
        \le
        2\sup_{s\in\mathcal S}
        |\widehat{\mathcal R}_{M,\varepsilon}(s)-\mathcal R(s)|.
\]
Quadratic growth of the risk converts this into a bound on the distance in
\(L^2(\nu_*;A)\). After localizing around \(s_*\) and the standard quadratic
absorption we obtain the order \(M^{-1}\) in the entropy plateau.
\end{proof}

\begin{remark}[Neural-network and RKHS classes]
Theorem~\ref{thm:nonparametric-score-closure} applies to RKHS balls and to
neural-network classes for which metric-entropy bounds in \(L^2(\nu_*;A)\) are
known. In this case the local parametric dimension \(p\) is replaced by the
global complexity \(\mathfrak D(\mathcal S)\), and the condition
\(\theta\in\Theta_R\subset\mathbb R^p\) is no longer part of the theorem.
\end{remark}

\section{Local Parametric Closure}

Let \(s_\theta\), \(\theta\in\Theta_R\subset\mathbb R^p\), be a local class, and
\[
        \mathcal R(\theta)=
        \frac12\int\|s_\theta-s_*\|_A^2\,d\nu_* .
\]

\begin{assumption}[Strong convexity of the risk]
\label{ass:risk}
On the ball \(\Theta_R\),
\[
        c_G|\theta-\theta_*|^2
        \le
        \mathcal R(\theta)-\mathcal R(\theta_*)
        \le
        C_G|\theta-\theta_*|^2 .
\]
\end{assumption}

\begin{assumption}[Dynamic identifiability of the score risk]
\label{ass:J-ident}
Along the backward evolution the statistical risk of the local class is linked to
the entropy dissipation:
\[
        \|s_\theta-s_*\|_{L^2(\rho_t\mu_0;A)}^2
        \le C_J J_t
\]
for all \(\theta\in\Theta_R\) that arise after one admissible gradient-descent
step. After renormalizing the local parameter metric we may take \(C_J=1\); the
general case only changes the coefficient in front of \(J_t\).
\end{assumption}

The gradient step
\[
        \theta^+=\theta-\alpha\nabla\widehat{\mathcal R}_{M,\varepsilon}(\theta)
\]
gives the coefficient
\[
        \widetilde q=
        \sqrt{1-2\alpha c_G+\alpha^2C_G^2}<1.
\]

\begin{theorem}[Score closure]
\label{thm:closure}
Suppose Assumptions~\ref{ass:risk} and~\ref{ass:J-ident} hold, \(0<\alpha\le 1/C_G\), and
the gradient remainder of the empirical risk does not exceed \(\eta_M\). If
\(\alpha\eta_M\le R/4\), then the step stays in \(\Theta_R\), and
\[
        \Eval(t)\le
        \widetilde q^{\,2}J_t+
        C\eta_M^2+C\varepsilon .
\]
\end{theorem}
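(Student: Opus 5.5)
The proof follows the standard contraction analysis of one gradient step for a strongly convex objective, and then uses Assumption~\ref{ass:J-ident} to turn the parameter contraction into a bound in terms of \(J_t\). Throughout, the validation error \(\Eval(t)\) is read as the \(A\)-weighted score discrepancy \(\|s_{\theta^+}-s_*\|^2_{L^2(\rho_t\mu_0;A)}\) of the updated parameter along the backward evolution.

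\emph{Step 1: decomposition.} Write
\[
\nabla\widehat{\mathcal R}_{M,\varepsilon}(\theta)=\nabla\mathcal R(\theta)+e_M(\theta),
\qquad |e_M|\le\eta_M+C\sqrt\varepsilon .
\]
The \(\sqrt\varepsilon\) piece comes from the denoising-label bias in Theorem~\ref{thm:denoise}, transferred to the gradient by Cauchy--Schwarz. The step then splits as
\[
\theta^+-\theta_*=\bigl(\theta-\theta_*-\alpha\nabla\mathcal R(\theta)\bigr)-\alpha e_M .
\]

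\emph{Step 2: deterministic contraction.} By Assumption~\ref{ass:risk}, \(\mathcal R\) is \(c_G\)-strongly monotone and has \(C_G\)-Lipschitz gradient on \(\Theta_R\). I will take this as the reading of the two-sided quadratic bound, which is exact for the quadratic risk \(\mathcal R(\theta)\), possibly after adjusting constants. Expanding the square then gives
\[
|\theta-\theta_*-\alpha\nabla\mathcal R(\theta)|^2\le(1-2\alpha c_G+\alpha^2C_G^2)|\theta-\theta_*|^2=\widetilde q^{\,2}|\theta-\theta_*|^2 .
\]
Since \(\alpha\le1/C_G\) and \(c_G\le C_G\), we have \(\widetilde q<1\). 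The perturbed step lies within distance \(\widetilde q R+\alpha|e_M|\le \widetilde q R+R/4\) of \(\theta_*\). Admissibility, i.e.\ \(\theta^+\in\Theta_R\), follows once \(\theta\) is taken in a slightly smaller ball around \(\theta_*\). This localization is a mild point I will fix by working on \(\Theta_{R/2}\) or by shrinking constants.

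\emph{Step 3: transfer to \(\Eval\).} Using \((a+b)^2\le(1+\kappa)a^2+(1+\kappa^{-1})b^2\),
\[
|\theta^+-\theta_*|^2\le(1+\kappa)\widetilde q^{\,2}|\theta-\theta_*|^2+C_\kappa(\eta_M^2+\varepsilon).
\]
Next, the parameter distance must be expressed as a score distance in \(L^2(\rho_t\mu_0;A)\). The upper half of Assumption~\ref{ass:risk} bounds \(\|s_{\theta^+}-s_*\|^2\) by \(2C_G|\theta^+-\theta_*|^2\) in \(L^2(\nu_*;A)\). Assumption~\ref{ass:J-ident}, with \(C_J=1\) after the stated renormalization of the parameter metric, bounds the contracted part by \(J_t\). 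This is where I expect the real obstacle: the metric in which the contraction holds must be matched exactly with the one in which \(J_t\) controls the score error, so that the coefficient in front of \(J_t\) is precisely \(\widetilde q^{\,2}\) and not \(\widetilde q^{\,2}C_G/c_G\).

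My approach to this obstacle is to use the renormalization freedom in Assumption~\ref{ass:J-ident}: define the local parameter metric to be the one induced by the score map in \(L^2(\rho_t\mu_0;A)\). In that metric \(|\theta-\theta_*|^2\le J_t\) holds directly, and the contraction of Step~2 is applied in the same metric. The factor \(1+\kappa\) is absorbed by slightly enlarging \(\widetilde q\), which stays below \(1\); equivalently, it is absorbed into the constant \(C\) in front of \(\eta_M^2+\varepsilon\), since the constant in the theorem is unspecified. Collecting the terms gives \(\Eval(t)\le\widetilde q^{\,2}J_t+C\eta_M^2+C\varepsilon\).
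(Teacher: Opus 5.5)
Your outline follows the paper's own route: contraction of the population step, an additive error \(\alpha\eta_M\), invariance of \(\Theta_R\), then Assumption~\ref{ass:J-ident} to reach \(\Eval\). The step you flag as the obstacle, however, is not closed by your fix. Declaring the parameter metric to be \(|\theta-\theta'|_t:=\|s_\theta-s_{\theta'}\|_{L^2(\rho_t\mu_0;A)}\) does turn Assumption~\ref{ass:J-ident} into \(|\theta-\theta_*|_t^2\le J_t\). But Step~2 was proved in the Euclidean parameter norm, using the constants \(c_G,C_G\) of Assumption~\ref{ass:risk}. These belong to \(\mathcal R\), hence to \(\nu_*\), and say nothing about the new, \(t\)-dependent norm; in that norm \(\theta-\alpha\nabla\mathcal R(\theta)\) is not even a gradient step. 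Concretely, take a locally linear score map \(s_\theta-s_*=\Phi(\theta-\theta_*)\) with Gram matrices \(G_*\) in \(L^2(\nu_*;A)\) and \(G_t\) in \(L^2(\rho_t\mu_0;A)\). Then
\[
\theta^+-\theta_*=(I-\alpha G_*)(\theta-\theta_*)-\alpha e_M ,
\]
so the contraction factor in \(|\cdot|_t\) is \(\|G_t^{1/2}(I-\alpha G_*)G_t^{-1/2}\|\). Under your reading of Assumption~\ref{ass:risk}, this coincides with \(\|I-\alpha G_*\|\le\widetilde q\) when \(G_t\) and \(G_*\) commute. In general it is only bounded by \(\operatorname{cond}(G_t)^{1/2}\,\widetilde q\), and it can exceed \(1\). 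The renormalization therefore moves the \(C_G/c_G\)-type distortion you wanted to avoid into a \(\operatorname{cond}(G_t)\) factor rather than removing it.

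Step~3 has a second problem: the factor \(1+\kappa\) cannot \emph{equivalently} be absorbed into \(C\). It multiplies \(J_t\), and the cross term \(2\alpha\widetilde q\,|e_M|\,|\theta-\theta_*|\) is not \(O(\eta_M^2+\varepsilon)\) uniformly in \(J_t\). Take a matched metric (\(G_t=I\)) with \(c_G=C_G\), so that \(\nabla\mathcal R(\theta)=c_G(\theta-\theta_*)\) and the one-step factor is exactly \(\widetilde q=1-\alpha c_G\). With \(J_t=|\theta-\theta_*|^2\), which Assumption~\ref{ass:J-ident} allows, an \(e_M\) antiparallel to \(\theta-\theta_*\) gives \(\Eval=(\widetilde q\sqrt{J_t}+\alpha\eta_M)^2\). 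This violates the claimed bound once \(\eta_M|\theta-\theta_*|\gg\eta_M^2+\varepsilon\). Keeping exactly \(\widetilde q^{\,2}\) requires slack between \(\widetilde q\) and the true one-step factor.

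There are also two smaller slips. First, \(\widetilde q<1\) does not follow from \(\alpha\le1/C_G\) and \(c_G\le C_G\); it needs \(\alpha<2c_G/C_G^2\), and \(c_G=1\), \(C_G=4\), \(\alpha=1/4\) give \(\widetilde q^{\,2}=3/2\). Second, once \(C\sqrt\varepsilon\) is put into \(e_M\), the hypothesis \(\alpha\eta_M\le R/4\) no longer bounds \(\alpha|e_M|\).

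The paper's proof does not supply the missing step either. It passes from parameter error to \(\Eval\) by citing Assumption~\ref{ass:J-ident} with the normalization \(C_J=1\), i.e.\ it treats the cost-free transfer as a convention. If you want the stated coefficient, assume explicitly that \(J_t\) controls \(|\theta-\theta_*|^2\) and \(\Eval\) is controlled by \(|\theta^+-\theta_*|^2\) in the same metric in which the contraction by \(\widetilde q\) holds, with room to absorb the cross term. Otherwise what your argument proves is
\[
\Eval(t)\le(1+\kappa)\Lambda_t\,\widetilde q^{\,2}J_t+C_\kappa(\eta_M^2+\varepsilon),
\]
where \(\Lambda_t\) is the distortion between the parameter metric and the \(L^2(\rho_t\mu_0;A)\) score metric.
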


\begin{proof}
Strong convexity and smoothness give the contraction of the limiting step:
\[
        |\theta^+-\theta_*|\le \widetilde q|\theta-\theta_*|.
\]
The empirical remainder adds \(\alpha\eta_M\). The condition
\(\alpha\eta_M\le R/4\) keeps the ball invariant. The passage from the parameter
error to \(\Eval\) is made through Assumption~\ref{ass:J-ident}, which links the
local score risk to the dynamic dissipation \(J_t\).
\end{proof}

\section{Lower Bounds: The Le Cam--Assouad Method}

Consider the trace-smoothed class
\[
        U_\theta(x)=\frac12\langle Kx,x\rangle+
        \sum_{j=1}^p\theta_j\Psi_j(Sx),
        \qquad |\theta|\le R,
\]
where \(S\in\LLtwo(\Hh;\mathbb R^m)\), and the \(\Psi_j\) are smooth and bounded.

\begin{assumption}[Information nondegeneracy]
\label{ass:info}
The matrix
\[
        I(\theta)_{ij}
        =
        \int
        \left(\Psi_i(Sx)-\E_{\nu_\theta}\Psi_i(SX)\right)
        \left(\Psi_j(Sx)-\E_{\nu_\theta}\Psi_j(SX)\right)
        \,d\nu_\theta(x)
\]
satisfies
\[
        0<i_-I\le I(\theta)\le i_+I<\infty .
\]
\end{assumption}

\begin{lemma}[KL expansion]
\label{lem:kl}
Under Assumption~\ref{ass:info},
\[
        c|\theta-\theta'|^2
        \le
        \mathrm{KL}(\nu_\theta\|\nu_{\theta'})
        \le
        C|\theta-\theta'|^2
\]
for \(\theta,\theta'\) in a sufficiently small ball.
\end{lemma}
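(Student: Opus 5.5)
The plan is to treat \(\{\nu_\theta\}\) as an exponential family. Presumably \(\nu_\theta\) is the Gibbs measure \(d\nu_\theta\propto e^{-U_\theta}\,d\mu_0\) on \(\Hh\), with \(\mu_0\) the Gaussian reference measure, so that the quadratic part \(\frac12\langle Kx,x\rangle\) is absorbed into a fixed Gaussian base measure \(\gamma\). Then
\[
        \frac{d\nu_\theta}{d\gamma}(x)=\exp\bigl(-\langle\theta,T(x)\rangle-\Lambda(\theta)\bigr),
        \qquad T(x)=(\Psi_1(Sx),\dots,\Psi_p(Sx)),
\]
with log-partition function \(\Lambda(\theta)=\log\int e^{-\langle\theta,T\rangle}d\gamma\). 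Because the \(\Psi_j\) are bounded, \(|T|\le B\) uniformly. Hence \(\Lambda\) is finite and real-analytic on all of \(\mathbb R^p\), and differentiation under the integral is justified by dominated convergence. The infinite-dimensional setting causes no trouble here: only the finite-dimensional statistic \(SX\) enters, and \(S\in\LLtwo\) guarantees that \(SX\) is a well-defined \(\mathbb R^m\)-valued random variable under \(\gamma\).

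The key step is the Bregman identity for exponential families,
\[
        \mathrm{KL}(\nu_\theta\|\nu_{\theta'})
        =\Lambda(\theta')-\Lambda(\theta)-\langle\nabla\Lambda(\theta),\theta'-\theta\rangle .
\]
It follows directly by writing \(\log(d\nu_\theta/d\nu_{\theta'})=\langle\theta'-\theta,T\rangle+\Lambda(\theta')-\Lambda(\theta)\) and using \(\nabla\Lambda(\theta)=-\E_{\nu_\theta}T\). Differentiating twice gives \(\nabla^2\Lambda(\theta)=\mathrm{Cov}_{\nu_\theta}(T)=I(\theta)\), which is exactly the matrix in Assumption~\ref{ass:info}.

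Taylor's formula with integral remainder then gives
\[
        \mathrm{KL}(\nu_\theta\|\nu_{\theta'})=\int_0^1(1-t)\,(\theta'-\theta)^{\top}I\bigl(\theta+t(\theta'-\theta)\bigr)(\theta'-\theta)\,dt .
\]
Applying \(i_-\le I\le i_+\) along the segment, which lies in the ball by convexity, yields the lemma with \(c=i_-/2\) and \(C=i_+/2\).

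The only delicate point is that Assumption~\ref{ass:info} is stated on the parameter region. If it is only known at a single point, I would instead use continuity of \(\theta\mapsto I(\theta)\). The third cumulant of \(T\) is bounded by \(CB^3\), so \(I\) is Lipschitz with an explicit constant. A sufficiently small ball then keeps \(I\) within \([i_-/2,\,2i_+]\), which explains the "sufficiently small ball" clause.

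I expect the main obstacle to be justifying that \(\nu_\theta\) is well defined with a \(\theta\)-independent Gaussian base measure. This requires \(K\) to be compatible with \(\mu_0\) so that \(e^{-\frac12\langle Kx,x\rangle}d\mu_0\) is itself a (normalizable) Gaussian measure. I would assume this from the construction in Part~I, or else simply define \(\gamma\) as that normalized measure. Once that is settled, the remaining steps are routine.
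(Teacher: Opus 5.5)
Your proof is correct and follows the same route as the paper. You write $\nu_\theta$ as an exponential family in the statistics $\Psi_j(Sx)$, identify the Hessian of $\log Z_\theta$ with the covariance matrix $I(\theta)$, and convert Assumption~\ref{ass:info} into the two-sided quadratic bound; your Bregman identity and integral-remainder Taylor formula make explicit the step the paper leaves implicit, giving $c=i_-/2$ and $C=i_+/2$.

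The obstacle you flag at the end is not a real one. Because the $\Psi_j$ are bounded, finiteness of $Z_\theta$ for any single $\theta$ already forces $\int e^{-\frac12\langle Kx,x\rangle}\,d\mu_0<\infty$ (automatic if $K\ge 0$). So the $\theta$-independent base measure only needs to be a finite measure, not a Gaussian one.
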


\begin{proof}
For an exponential family
\[
        \log\frac{d\nu_\theta}{d\nu_{\theta'}}
        =
        -U_\theta+U_{\theta'}-\log Z_\theta+\log Z_{\theta'}.
\]
The second derivative of \(\log Z_\theta\) equals the covariance matrix of the
statistics \(\Psi_j(Sx)\). Assumption~\ref{ass:info} gives the two-sided
quadratic expansion of the KL divergence.
\end{proof}

\begin{proposition}[An explicit sequence of hard distributions]
\label{prop:hard-distributions}
Suppose that in the trace-smoothed class the functions \(\Psi_1,\dots,\Psi_p\)
are chosen so that the information matrix of Assumption~\ref{ass:info} is
uniformly nondegenerate. For \(a=M^{-1/2}\) and \(\omega\in\{-1,1\}^p\) set
\[
        U_\omega(x)
        =
        \frac12\langle Kx,x\rangle+
        a\sum_{j=1}^p\omega_j\Psi_j(Sx),
        \qquad
        \nu_\omega=Z_\omega^{-1}e^{-U_\omega}\mu_0 .
\]
Then, for neighboring vertices \(\omega,\omega'\) differing in a single
coordinate,
\[
        \mathrm{KL}(\nu_\omega^{\otimes M}\|\nu_{\omega'}^{\otimes M})
        \le C,
\]
while
\[
        \|s_\omega-s_{\omega'}\|_{L^2(\nu_\omega;A)}^2
        \ge cM^{-1}.
\]
Consequently, the family \(\{\nu_\omega\}\) realizes a hard subexperiment on which
the order \(p/M\) for the score risk is attained.
\end{proposition}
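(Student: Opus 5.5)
The plan has three parts: bound the KL between neighbouring vertices with Lemma~\ref{lem:kl} and tensorization, bound the score separation from below using the information matrix of Assumption~\ref{ass:info}, and then feed both into Assouad's lemma.

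\textbf{KL bound.} All vertices lie in the ball \(|\theta|\le a\sqrt p=\sqrt{p/M}\). For \(M\) large this ball sits inside the small neighbourhood of Lemma~\ref{lem:kl}, and also inside \(|\theta|\le R\). Two neighbouring vertices \(\omega,\omega'\) differ only in coordinate \(j\), so \(|\theta_\omega-\theta_{\omega'}|=2a\). Lemma~\ref{lem:kl} then gives \(\mathrm{KL}(\nu_\omega\|\nu_{\omega'})\le 4Ca^2=4C/M\). Tensorization gives \(\mathrm{KL}(\nu_\omega^{\otimes M}\|\nu_{\omega'}^{\otimes M})=M\,\mathrm{KL}(\nu_\omega\|\nu_{\omega'})\le 4C\).

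\textbf{Score separation.} Write the score as \(s_\omega=\nabla\log(d\nu_\omega/d\mu_0)\) relative to the reference geometry. The quadratic part \(K\) cancels, so
\[
s_\omega-s_{\omega'}=-2a\,\omega_j S^*\nabla\Psi_j(Sx).
\]
It remains to show \(\|S^*\nabla\Psi_j(S\cdot)\|_{L^2(\nu_\omega;A)}^2\ge c_0>0\) uniformly in \(\omega\). I would get this from nondegeneracy of the information matrix through a Poincaré-type step. The \(A\)-LSI of Part~I, with constant \(C_{\mathrm{LSI}}^A\), implies a Poincaré inequality in the \(A\)-energy. Applied to \(f=\Psi_j\circ S\), it gives
\[
i_-\le I(\theta)_{jj}=\mathrm{Var}_{\nu_\omega}(\Psi_j(SX))\le C_{\mathrm{LSI}}^A\int\Gamma_A(f,f)\,d\nu_\omega .
\]
Here \(\Gamma_A(f,f)=\|S^*\nabla\Psi_j(Sx)\|_A^2\). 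This yields \(c_0=i_-/C_{\mathrm{LSI}}^A\), and hence \(\|s_\omega-s_{\omega'}\|^2\ge 4a^2c_0=cM^{-1}\).

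\textbf{Main obstacle.} I expect this Poincaré step to be the hardest. It needs the \(A\)-LSI to hold uniformly along the whole family \(\nu_\omega\). I would obtain this by Holley–Stroock perturbation: the perturbation \(a\sum_j\omega_j\Psi_j\) is bounded with oscillation \(O(a\sqrt p)\cdot\sup|\Psi|\), so it changes the LSI constant of the Gaussian part \(\frac12\langle Kx,x\rangle\) only by a factor \(e^{O(1)}\). If that fails, the fallback is to assume directly that the \(A\)-Gram matrix of the \(S^*\nabla\Psi_j\) is nondegenerate.

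\textbf{Assouad.} The separation adds across coordinates: the supports of \(\nabla\Psi_j\circ S\) are not orthogonal, but the Gram matrix is comparable to \(I(\theta)\) by the same argument, so
\[
\|s_\omega-s_{\omega''}\|^2\ge c\,a^2\,\rho_H(\omega,\omega''),
\]
where \(\rho_H\) is the Hamming distance. Assouad's lemma, with total variation bounded by Pinsker as \(\|\nu_\omega^{\otimes M}-\nu_{\omega'}^{\otimes M}\|_{TV}\le\sqrt{2C}<1\) after shrinking the constant in \(a\) if necessary, then gives the minimax bound
\[
\inf_{\hat s}\max_\omega \E\|\hat s-s_\omega\|^2\ge c'\,p\,a^2=c'p/M .
\]
This is the claimed order for the hard subexperiment.
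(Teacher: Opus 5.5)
Your KL step is the paper's (Lemma~\ref{lem:kl} at distance $2a$, then tensorization). The separation step is where you genuinely differ. The paper only asserts $\|s_\omega-s_{\omega'}\|^2\ge c|\theta^\omega-\theta^{\omega'}|^2$ from nondegeneracy of $I(\theta)$ and $A$-compatibility. It does not name what turns a lower bound on $\mathrm{Var}_{\nu_\omega}(\Psi_j(SX))$ into a lower bound on the $A$-energy of $\Psi_j\circ S$, and a pointwise comparison of quadratic forms cannot do this alone.

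You supply that mechanism explicitly:
the exact difference $s_\omega-s_{\omega'}=-2a\omega_jS^*\nabla\Psi_j(Sx)$;
a Poincaré inequality $\mathrm{Var}\le C_P\int\Gamma_A\,d\nu_\omega$;
Holley--Stroock to make $C_P$ uniform over the family.
Here $A$-compatibility enters only through the $A$-LSI of the Gaussian base, as in Proposition~\ref{prop:nano}. The uniformity step is genuinely needed, since Assumption~\ref{ass:part1-base} provides an $A$-LSI only for $\hat\nu_t$, not for $\nu_\omega$.

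Applying the same inequality to $\sum_jv_j\Psi_j\circ S$ gives $G\succeq C_P^{-1}I(\theta)$ for the Gram matrix $G_{jk}=\int\langle S^*\nabla\Psi_j(Sx),S^*\nabla\Psi_k(Sx)\rangle_A\,d\nu_\omega$. This yields the Hamming-additive separation that the ``consequently'' clause actually requires. The paper's proof does not address this point; it leaves it to Theorem~\ref{thm:minimax} plus the bi-Lipschitz hypothesis of Corollary~\ref{cor:minimax-score-risk}.

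The price of your route is that the constants depend on $a\sup_x|\sum_j\omega_j\Psi_j(Sx)|$ staying $O(1)$. The same bound on $d\nu_\omega/d\nu_{\omega'}$ also makes the $\omega$-dependent norms $L^2(\nu_\omega;A)$ uniformly equivalent. You should state this explicitly, since Assouad's projection argument needs a single metric.

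Finally, you need not shrink $a$ to push Pinsker's bound below $1$. The Bretagnolle--Huber bound $1-\mathrm{TV}\ge\frac12e^{-\mathrm{KL}}$ lets you keep $a=M^{-1/2}$ exactly as stated.
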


\begin{proof}
The KL bound follows from Lemma~\ref{lem:kl}: the distance between neighboring
parameters equals \(2a\), so
\[
        M\,\mathrm{KL}(\nu_\omega\|\nu_{\omega'})
        \le CMa^2\le C.
\]
The lower bound for the score distance follows from the nondegeneracy of the
information matrix and from \(A\)-compatibility:
\[
        \|s_\omega-s_{\omega'}\|_{L^2(\nu_\omega;A)}^2
        \ge c|\theta^\omega-\theta^{\omega'}|^2
        \ge cM^{-1}.
\]
\end{proof}

\begin{theorem}[Minimax lower bound]
\label{thm:minimax}
Suppose Assumption~\ref{ass:info} holds. Then there exist \(c>0\) and \(R_0>0\)
such that for all \(R\le R_0\)
\[
        \inf_{\widehat\theta}
        \sup_{\theta\in[-R,R]^p}
        \E_\theta|\widehat\theta-\theta|^2
        \ge
        c\,\frac{p}{M}.
\]
Consequently, the uniform error of the functional is of root order
\(\sqrt{p/M}\), and the entropy plateau of order \(p/M\) cannot be improved
without additional structural information.
\end{theorem}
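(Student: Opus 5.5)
We prove the bound with Assouad's lemma on the hypercube of Proposition~\ref{prop:hard-distributions}, taking the parameters as \(\theta^\omega=a\omega\) with \(a=\min(M^{-1/2},R)\), \(\omega\in\{-1,1\}^p\). First choose \(R_0\) so small that the ball \(\{|\theta|_\infty\le R_0\}\) lies in the neighbourhood where Lemma~\ref{lem:kl} holds. One subtlety: the hypercube must sit in \([-R,R]^p\), but Lemma~\ref{lem:kl} must hold on the whole cube, whose Euclidean radius is \(a\sqrt p\). We therefore state the lemma coordinatewise. For vertices \(\omega,\omega'\) that differ only in coordinate \(j\), the segment joining them is one-dimensional. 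The second-order Taylor expansion of \(\log Z_\theta\) along this segment involves only the diagonal entry \(I(\theta)_{jj}\le i_+\). This gives
\[
\mathrm{KL}(\nu_\omega\|\nu_{\omega'})\le C a^2
\]
uniformly in \(p\), since \(\Psi_j\) is bounded and the Hessian of \(\log Z\) is the covariance of the \(\Psi(SX)\) (the proof of Lemma~\ref{lem:kl}).

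\textbf{Step 1 (separation).} The loss splits over coordinates:
\[
|\widehat\theta-\theta^\omega|^2=\sum_{j=1}^p(\widehat\theta_j-a\omega_j)^2 .
\]
Let \(\widehat\omega_j=\operatorname{sign}\widehat\theta_j\). Whenever \(\widehat\omega_j\ne\omega_j\) we have \((\widehat\theta_j-a\omega_j)^2\ge a^2\). Hence
\[
|\widehat\theta-\theta^\omega|^2\ge a^2\,\rho_H(\widehat\omega,\omega),
\]
where \(\rho_H\) is the Hamming distance, so the separation constant in Assouad's lemma is \(a^2\).

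\textbf{Step 2 (indistinguishability).} For neighbouring vertices, tensorization and Step 0 give
\[
\mathrm{KL}(\nu_\omega^{\otimes M}\|\nu_{\omega'}^{\otimes M})\le CMa^2\le C .
\]
Pinsker's inequality, or the Bretagnolle--Huber bound
\[
1-\mathrm{TV}\ge \tfrac12 e^{-\mathrm{KL}},
\]
then bounds the affinity \(1-\mathrm{TV}(\nu_\omega^{\otimes M},\nu_{\omega'}^{\otimes M})\) from below by \(\tfrac12 e^{-C}\).

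\textbf{Step 3 (conclusion).} Assouad's lemma yields
\[
\inf_{\widehat\theta}\max_\omega \E_\omega|\widehat\theta-\theta^\omega|^2\ge \frac{p}{2}\,a^2\cdot\tfrac12 e^{-C}.
\]
With \(a^2=M^{-1}\) this is \(c\,p/M\). This choice needs \(M^{-1/2}\le R\). Otherwise the bound reads \(c\,pR^2\), which is the meaning of the threshold in the statement, and we assume \(M\ge R^{-2}\) implicitly.

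\textbf{Consequences.} The remaining claims are read off through Assumption~\ref{ass:risk} and Proposition~\ref{prop:hard-distributions}. The score risk \(\mathcal R(\widehat\theta)-\mathcal R(\theta_*)\ge c_G|\widehat\theta-\theta_*|^2\) inherits the lower bound \(p/M\). Taking square roots, the uniform deviation of the risk functional cannot be smaller than order \(\sqrt{p/M}\). Otherwise, the basic empirical-minimum inequality in the proof of Theorem~\ref{thm:nonparametric-score-closure} combined with quadratic growth would give a squared parameter error of smaller order than \(p/M\), contradicting Step~3. Hence the plateau \(C\eta_M^2\sim p/M\) in Theorem~\ref{thm:closure} is unimprovable without extra structure.

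The main obstacle is Step 0, namely keeping the KL constant independent of \(p\). A naive use of Lemma~\ref{lem:kl} on a Euclidean ball of radius \(a\sqrt p\) would force \(R_0\) to depend on \(p\). We avoid this through the one-dimensional Taylor argument, which uses \(\sup_\theta I(\theta)_{jj}\le i_+\) on the cube and only boundedness of \(\Psi_j\) for the third-order remainder.
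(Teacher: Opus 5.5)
Your proof of the displayed inequality is the paper's argument, carried out more carefully than the paper does: Assouad's lemma on the hypercube $\theta^\omega=a\omega$ with $a\asymp M^{-1/2}$, the tensorized neighbour bound $\mathrm{KL}(\nu_{\theta^\omega}^{\otimes M}\|\nu_{\theta^{\omega'}}^{\otimes M})\le CMa^2$, and the conclusion $c\,pa^2$.

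\textbf{The displayed bound.} Your coordinatewise KL step removes the paper's tacit need to fit the whole cube, of Euclidean radius $a\sqrt p$, into the small ball of Lemma~\ref{lem:kl}. It is also easier than you make it. $\mathrm{KL}(\nu_\theta\|\nu_{\theta'})$ is exactly the Bregman divergence of $\log Z_\theta$. So for $\theta'-\theta=\pm 2a e_j$ it equals $4a^2\int_0^1(1-t)\operatorname{Var}_{\theta+t(\theta'-\theta)}\bigl(\Psi_j(SX)\bigr)\,dt\le 2a^2\|\Psi_j\|_\infty^2$. There is no third-order remainder and no smallness of $R$ is required. Your caveat $M\ge R^{-2}$ is genuinely necessary, since the estimator $\widehat\theta\equiv 0$ has risk at most $pR^2$, and it is missing from the paper's proof as well. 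It is, however, a condition on $M$, not what the threshold $R_0$ encodes.

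\textbf{The gap: the root order for the functional.} The basic inequality and quadratic growth give $c_G|\widehat\theta-\theta|^2\le 2\sup|\widehat{\mathcal R}_{M,\varepsilon}-\mathcal R|$. Together with Step~3 they show only that the uniform deviation is $\gtrsim p/M$, not $\gtrsim\sqrt{p/M}$. Take a deviation of order $(p/M)^{3/4}$, which is $o(\sqrt{p/M})$ as $p/M\to0$. It would yield a squared parameter error of order $(p/M)^{3/4}$, which is not $o(p/M)$, so no contradiction with Step~3 arises. \emph{Taking square roots} conflates the parameter error, of order $\sqrt{p/M}$, with the deviation of the risk, which near the minimizer is quadratic in the parameter error.

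A root-order lower bound for estimating the functional needs its own Le Cam two-point (or fuzzy-hypotheses) argument. The hypotheses must be at bounded KL distance while the functional itself moves by order $\sqrt{p/M}$ between them. This is what the paper appeals to in Corollary~\ref{cor:minimax-score-risk}; its proof of the theorem establishes only the displayed parameter bound.

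Your transfer of $p/M$ to the score risk via $\mathcal R(\widehat\theta)-\mathcal R(\theta)\ge c_G|\widehat\theta-\theta|^2$ is fine. It does, however, require Assumption~\ref{ass:risk} uniformly over the hypercube, which is a hypothesis not included in the theorem.
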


\begin{proof}
Take the hypercube \(\theta^\omega=a\omega\), \(\omega\in\{-1,1\}^p\), with
\(a\asymp M^{-1/2}\). By Lemma~\ref{lem:kl},
\[
        \mathrm{KL}(\nu_{\theta^\omega}^{\otimes M}\|
        \nu_{\theta^{\omega'}}^{\otimes M})
        \le CMa^2
\]
for neighboring vertices. Choosing \(a\) small enough, Assouad's lemma
\cite{Wainwright} applies and gives the lower bound \(cp a^2=cp/M\).
\end{proof}

\begin{corollary}[Lower bound for the score risk]
\label{cor:minimax-score-risk}
Under the hypotheses of Theorem~\ref{thm:minimax}, if the map
\(\theta\mapsto s_\theta\) is locally bi-Lipschitz in the \(L^2(\nu_\theta;A)\)
norm, then
\[
        \inf_{\widehat s}
        \sup_{\theta\in[-R,R]^p}
        \mathbb E_\theta
        \|\widehat s-s_\theta\|_{L^2(\nu_\theta;A)}^2
        \ge c\,\frac{p}{M}.
\]
Moreover, for the uniform estimation of the functional in this class the root
lower order is preserved,
\[
        \inf_{\widehat{\Eval}}
        \sup_{\theta\in[-R,R]^p}
        \mathbb E_\theta|\widehat{\Eval}-\mathcal L_{\mathrm{val}}^{(\theta)}|
        \ge c\sqrt{\frac pM}
\]
in the absence of additional quadratic structure.
\end{corollary}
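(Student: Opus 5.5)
The corollary has two parts, and I will prove them separately. The first part, the bound for the score risk, I will get from Theorem~\ref{thm:minimax} by a projection argument. The second part, the root order for the functional, I will get by a separate two-point (Le Cam) construction.

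\textbf{Score risk.} Fix any estimator \(\widehat s\) and build from it a parameter estimator \(\widehat\theta\). A minimum-distance choice works: pick any point of \([-R,R]^p\) that nearly minimizes \(\|\widehat s-s_\vartheta\|_{L^2(\nu_\vartheta;A)}\). The triangle inequality then gives \(\|s_{\widehat\theta}-s_\theta\|\le 2\|\widehat s-s_\theta\|\) up to the near-minimization slack. The difficulty is that the norm depends on \(\vartheta\). For \(R\le R_0\), the densities \(d\nu_\vartheta/d\nu_\theta\) are uniformly bounded above and below, since \(\Psi_j\) is bounded and \(|\vartheta-\theta|\le 2R\sqrt p\) with \(a\) small. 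The norms \(L^2(\nu_\vartheta;A)\) are therefore uniformly equivalent, and I can work in the fixed norm \(L^2(\nu_\theta;A)\) at the cost of a constant. Local bi-Lipschitzness then gives \(|\widehat\theta-\theta|^2\le C\|s_{\widehat\theta}-s_\theta\|^2\le C'\|\widehat s-s_\theta\|^2\). Taking expectations and the supremum over \(\theta\), Theorem~\ref{thm:minimax} yields \(\sup_\theta\mathbb E_\theta\|\widehat s-s_\theta\|^2\ge c\,p/M\). Alternatively, I could apply Assouad directly to the hypercube of Proposition~\ref{prop:hard-distributions}, which already provides the separation \(cM^{-1}\) per coordinate and the KL bound \(C\). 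Summing the \(p\) coordinate contributions gives the same conclusion without the projection step.

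\textbf{Functional.} Here the target is \(\mathcal L_{\mathrm{val}}^{(\theta)}\). In the absence of quadratic structure I interpret it as a functional that is linear in the score discrepancy at first order, for instance \(\theta\mapsto\langle v,\theta\rangle\)-type behaviour with \(|v|\asymp\sqrt p\) coming from summing the \(p\) coordinates. I take two hypotheses, \(\theta^{\pm}=\pm a\,\mathbf 1/\sqrt p\cdot\sqrt p\), that is \(\theta^\pm=\pm b\mathbf 1\) with \(b\asymp M^{-1/2}\). By Lemma~\ref{lem:kl}, \(\mathrm{KL}(\nu_{\theta^+}^{\otimes M}\|\nu_{\theta^-}^{\otimes M})\le CMpb^2\). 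This is too large unless I rescale, so I choose the direction so that the KL stays \(O(1)\) while the functional separates by \(\sqrt{p/M}\). The relevant quantity is the ratio \(|\nabla\mathcal L_{\mathrm{val}}|/\sqrt{i_+}\), and a linear functional whose gradient has norm of order \(\sqrt p\) achieves the separation. Le Cam's two-point inequality then gives \(\inf\sup\mathbb E|\widehat{\Eval}-\mathcal L^{(\theta)}_{\mathrm{val}}|\ge c\,|\mathcal L^{(\theta^+)}-\mathcal L^{(\theta^-)}|\ge c\sqrt{p/M}\).

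\textbf{Main obstacle.} The hard step is justifying this first-order linear behaviour of \(\mathcal L_{\mathrm{val}}\) with gradient of size \(\sqrt p\). This is exactly what ``absence of additional quadratic structure'' must encode. As a fallback, I would define the functional as the uniform error over the class of risk functionals \(\sup_{s}|\widehat{\mathcal R}(s)-\mathcal R(s)|\). Its root order then follows from the first part by Jensen's inequality: \(\mathbb E|\cdot|\ge\) the square root of the quadratic score lower bound, provided the risk is evaluated at \(s_\theta\) and compared with the \(L^2\)-score error.
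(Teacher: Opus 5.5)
Your score-risk argument is the paper's route: the paper only says that bi-Lipschitzness transfers Theorem~\ref{thm:minimax}, and your minimum-distance map \(\widehat s\mapsto\widehat\theta\) is the reduction it leaves implicit. There are two caveats.

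\textbf{Norm equivalence.} Bounding \(d\nu_\vartheta/d\nu_\theta\) through \(\|\Psi_j\|_\infty\) only gives a constant \(\exp(O(Rp\max_j\|\Psi_j\|_\infty))\). The exponent is controlled by the \(\ell^1\)-distance \(\le 2Rp\), not by \(|\vartheta-\theta|\). So \(c\) stays independent of \(p\) only if \(R_0\lesssim 1/p\).

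\textbf{Assouad variant.} This needs \(\|s_\omega-s_{\omega'}\|^2\ge c\,\rho_H(\omega,\omega')/M\) for all pairs, with \(\rho_H\) the Hamming distance. Proposition~\ref{prop:hard-distributions} states it only for neighbours. The bound does hold: \(s_\theta-s_{\theta'}=-\sum_j(\theta_j-\theta'_j)S^*\nabla\Psi_j(Sx)\) is linear in \(\theta-\theta'\), so \(\|s_\theta-s_{\theta'}\|^2=(\theta-\theta')^\top G(\theta-\theta')\) with \(cI\le G\le CI\) by the bi-Lipschitz hypothesis.

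\textbf{The gap is in the functional part.} Your plan is the paper's: a two-point Le Cam argument at Euclidean distance \(M^{-1/2}\). You also correctly isolate what the paper's one-line proof hides. First, \(M\,\mathrm{KL}\le\frac{i_+}{2}M|\theta^+-\theta^-|^2=O(1)\) in every direction, so it is the step length that must be rescaled, not the direction; with \(\theta^\pm=\pm b\mathbf 1\) you need \(b\asymp(pM)^{-1/2}\). Second, a separation of \(\sqrt{p/M}\) therefore requires a directional derivative of order \(\sqrt p\). One-coordinate ``neighbouring'' vertices give only \(M^{-1/2}\) for a functional with \(O(1)\) partial derivatives. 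However, you then postulate the \(\sqrt p\)-gradient as an ``interpretation'' rather than derive it, so the root-order bound is not established.

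The missing step follows from the linear structure above. Take a competitor \(s_\vartheta\) with \(\vartheta=R\mathbf 1\). This is admissible because the functional is estimated uniformly over the class. Set \(f(\theta)=\frac12\|s_\vartheta-s_\theta\|^2=\frac12(\vartheta-\theta)^\top G(\vartheta-\theta)\) in a fixed reference norm. For \(\theta^\pm=\pm hu\) with \(u=G\vartheta/|G\vartheta|\) and \(h=\kappa M^{-1/2}\), the quadratic terms cancel exactly:
\[
  f(\theta^-)-f(\theta^+)=2h\,|G\vartheta|\ge 2\kappa c\,R\sqrt{p/M}.
\]
Meanwhile \(M\,\mathrm{KL}(\nu_{\theta^+}\|\nu_{\theta^-})\le 2i_+\kappa^2\), which is small for small \(\kappa\). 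Le Cam's two-point bound then gives \(c\sqrt{p/M}\).

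This is the concrete content of ``absence of quadratic structure''. The root order comes from competitors at distance \(\asymp R\sqrt p\) from the truth, where the risk is first-order linear in \(\theta\). If the norm is \(L^2(\nu_\theta;A)\) rather than fixed, you must still check that the \(\theta\)-dependence of \(\nu_\theta\) enters at lower order, e.g. for small \(R\).

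\textbf{Drop the fallback.} Jensen gives \(\mathbb E|X|\le(\mathbb E X^2)^{1/2}\), the wrong direction. Moreover, evaluating the quadratic risk at \(s_\theta\) produces \(\|\widehat s-s_\theta\|^2\asymp p/M\). That is exactly the quadratic regime the statement excludes, not \(\sqrt{p/M}\).
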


\begin{proof}
The bi-Lipschitz property transfers the parametric lower bound of
Theorem~\ref{thm:minimax} to a lower bound for the \(L^2(\nu_\theta;A)\) error of
the score field. The root order for the functional follows from the standard
two-point version of Le Cam's method after choosing neighboring parameters at
distance \(M^{-1/2}\).
\end{proof}

\section{Linearization and Tails}

Let
\[
        U(x)=\frac12\langle K_\infty x,x\rangle,
        \qquad 0<k_0I\le K_\infty\le k_1I.
\]

\begin{lemma}[Tails of the linearized model]
\label{lem:linear-tail}
If \(q_k\le Ck^{-1-\gamma}\) and \(K_\infty\) is aligned with the basis of \(Q\), then
\[
        \|(I-P_n)\nabla_QU\|_{L^2(\nu;A)}
        \le Cn^{-\gamma/2},
        \qquad
        \|(I-P_n)\nabla_Q^2U\|_{\LLtwo^A}
        \le Cn^{-\gamma/2}.
\]
\end{lemma}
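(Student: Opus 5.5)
We diagonalize everything in the common eigenbasis \(\{e_k\}\) of \(Q\), since \(K_\infty\) is assumed aligned with it: \(Qe_k=q_ke_k\) and \(K_\infty e_k=\kappa_ke_k\) with \(k_0\le\kappa_k\le k_1\). Then \(\nu\) is the centered Gaussian measure whose coordinates \(x_k=\langle x,e_k\rangle\) are independent. The Cameron--Martin gradient of \(U\) is diagonal,
\[
        \nabla_QU(x)=Q^{1/2}K_\infty x,\qquad
        \langle \nabla_QU(x),e_k\rangle=\sqrt{q_k}\,\kappa_k x_k ,
\]
and the Hessian is constant, \(\nabla_Q^2U=Q^{1/2}K_\infty Q^{1/2}=\operatorname{diag}(q_k\kappa_k)\). (If the paper's convention is \(\nabla_Q=Q\nabla\), the weights become \(q_k\) and \(q_k^2\) and the argument only improves.) Applying \(I-P_n\) simply kills the first \(n\) coordinates, so both quantities become explicit tail series.

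\emph{Step 1: the gradient.} By independence and Fubini,
\[
        \|(I-P_n)\nabla_QU\|_{L^2(\nu;A)}^2
        =\sum_{k>n}a_k\,q_k\kappa_k^2\,\E_\nu x_k^2,
\]
where \(a_k\) is the \(A\)-weight of the direction \(e_k\). We use the alignment together with the two-sided equivalence \(c_A|\cdot|\le\|\cdot\|_A\le C_A|\cdot|\) from Part~I to bound \(a_k\le C_A\). Under \(\nu\) the variance is \(\E_\nu x_k^2=(\kappa_k+q_k^{-1})^{-1}\le k_0^{-1}\) if \(\nu\propto e^{-U}\mu_0\) with \(\mu_0=\mathcal N(0,Q)\), and it is \(\kappa_k^{-1}\) if \(\nu\) is the Gaussian with precision \(K_\infty\); either way it is uniformly bounded. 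The tail is therefore at most \(C\sum_{k>n}q_k\le C\sum_{k>n}k^{-1-\gamma}\le Cn^{-\gamma}\), by comparison with \(\int_n^\infty t^{-1-\gamma}dt\). Taking square roots gives \(Cn^{-\gamma/2}\).

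\emph{Step 2: the Hessian.} The Hilbert--Schmidt norm in the \(A\)-geometry of the diagonal tail operator is \(\bigl(\sum_{k>n}a_k q_k^2\kappa_k^2\bigr)^{1/2}\). (With a two-sided weight \(a_ka_k'\) the bound is the same.) Since \(q_k\le q_1\), this sum is bounded by \(C\sum_{k>n}q_k\le Cn^{-\gamma}\), which again gives \(Cn^{-\gamma/2}\). In fact it decays faster, like \(n^{-1/2-\gamma}\), but the stated rate suffices.

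\emph{Main obstacle.} The only delicate point is to make sure that the \(A\)-norm really decouples coordinatewise with bounded weights, i.e.\ that \(A\), which is built from \((D,Q)\), does not reintroduce unbounded factors such as \(q_k^{-1}\) in the tail directions. I would handle this through the alignment hypothesis and the constants \(c_A,C_A\) of \(A\)-compatibility (Assumption~\ref{ass:part1-base}). If \(A\) is not exactly diagonal, I would fall back on the operator inequality \(A\le C_AI\) and bound the tail norms in the ambient \(\Hh\)-norm, for which the computations above apply verbatim.
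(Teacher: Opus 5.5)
Your proposal is correct and follows the paper's argument. The paper writes \(\nabla_QU=Q^{1/2}K_\infty x\), uses \(A\)-compatibility to pass to the ambient norm, bounds the tail by \(C\sum_{k>n}q_k\le Cn^{-\gamma}\), and calls the Hessian case analogous; you fill in the diagonalization, the uniform variance bound \(\E_\nu x_k^2\le k_0^{-1}\), and the explicit Hessian computation. One small correction: drop the alternative reading of \(\nu\) as the Gaussian with precision \(K_\infty\), which is not a measure on \(\Hh\) because \(K_\infty^{-1}\ge k_1^{-1}I\) is not trace class, and keep \(\nu\propto e^{-U}\mu_0\), which your bound already covers.
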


\begin{proof}
Since \(\nabla_QU=Q^{1/2}K_\infty x\), \(A\)-compatibility gives
\[
        \E_\nu\|(I-P_n)\nabla_QU\|_A^2
        \le C\sum_{k>n}q_k\le Cn^{-\gamma}.
\]
The estimate for the Hessian is analogous.
\end{proof}

\section{The Nanosystem Model}

Let \(\Hh=L^2(\Omega)\), where \(\Omega\subset\mathbb R^d\) is bounded with a
Lipschitz boundary. Set \(Q=(-\Delta+I)^{-s}\), \(s>d/2\). Then \(Q\) is a
nuclear operator on \(L^2(\Omega)\). Consider
\[
        D(x)=D_{\mathrm{bulk}}+D_{\mathrm{surf}}(x),
        \qquad
        d_-I\le D(x)\le d_+I .
\]
Here \(D_{\mathrm{surf}}:\Hh\to\LL(\Hh)\) is assumed measurable, globally
Lipschitz, uniformly bounded, self-adjoint, and such that the sum
\(D_{\mathrm{bulk}}+D_{\mathrm{surf}}(x)\) preserves the stated two-sided
positivity.

\begin{proposition}[Verification of the constants]
\label{prop:nano}
Suppose
\[
        U(x)=\frac12\langle Kx,x\rangle+\Phi(x),
        \qquad K\ge k_0I,\qquad \nabla^2\Phi\ge-\kappa_\Phi I,
        \quad k_0>\kappa_\Phi .
\]
Then the nanosystem model belongs to the global class of Part~I, with
\[
        c_A=d_-,
        \qquad
        C_A\ge d_+,
        \qquad
        C_{\mathrm{LSI}}^A
        \le
        \frac{C_Q}{d_-(k_0-\kappa_\Phi)} .
\]
The upper constant obeys the relative \(Q\)-bound \(C_A\ge d_+\), with equality
exactly in the commutative case \([D,Q]=0\); when \([D,Q]\neq0\) the operator order
\(Q^{1/2}D^{1/2}\) makes it strict, \(C_A>d_+\), in agreement with the value
\(C_{A,N}\approx2.22\) recorded in Table~\ref{tab:numerical-A-constants}.
\end{proposition}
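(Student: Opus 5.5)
The plan is to verify the three constants separately, working in the \(A\)-geometry of Part~I. There \(\Gamma_A(f,f)=\|A^{1/2}\nabla f\|^2\) with \(A(x)\) built from the pair \((D(x),Q)\), and \(\|v\|_A^2=\langle A v,v\rangle\). We identify \(A(x)=Q^{1/2}D(x)Q^{1/2}\) acting on the Cameron--Martin gradient \(\nabla_Q=Q^{1/2}\nabla\), as in the proof of Lemma~\ref{lem:linear-tail}. Then everything reduces to operator inequalities for \(D(x)\) sandwiched between \(Q^{1/2}\)-factors.

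\emph{Lower constant.} From \(D(x)\ge d_-I\) we get
\[
\langle Q^{1/2}D(x)Q^{1/2}v,v\rangle=\langle D(x)Q^{1/2}v,Q^{1/2}v\rangle\ge d_-\|Q^{1/2}v\|^2 .
\]
This is exactly the lower \(A\)-compatibility inequality relative to the \(Q\)-norm, so \(c_A=d_-\). Equality is attained whenever \(D(x)\) has \(d_-\) in its spectrum.

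\emph{Upper constant.} The symmetric sandwich gives \(\le d_+\|Q^{1/2}v\|^2\). However, the global class of Part~I measures the form through the nonsymmetric factorization \(Q^{1/2}D^{1/2}\), for which the relevant norm is
\[
\|D^{1/2}Q^{1/2}\|^2_{\text{rel}}=\sup_v\frac{\|Q^{1/2}D^{1/2}v\|^2}{\|Q^{1/2}v\|^2}.
\]
We bound this relative norm from below by \(d_+\) by testing on an eigenvector of \(D(x)\) for the top of its spectrum. If \([D,Q]=0\), the two factors commute, the ratio is exactly the spectral value, and \(C_A=d_+\). If \([D,Q]\neq0\), we argue by contradiction. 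Suppose \(C_A=d_+\). Then the top eigenspace of \(D\) would have to be invariant under \(Q\), and the remaining inequality would force \(D\) and \(Q\) to share eigenvectors, contradicting \([D,Q]\neq0\). Hence \(C_A>d_+\). The numerical value \(C_{A,N}\) is only cited from the table, not proved here.

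\emph{LSI constant.} We apply a Bakry--Émery argument for the measure \(\nu\propto e^{-U}\mu_0\) in the Cameron--Martin directions. The Hessian of \(U\) satisfies \(\nabla^2U\ge(k_0-\kappa_\Phi)I\), so the \(Q\)-gradient LSI holds with constant \(C_Q/(k_0-\kappa_\Phi)\). Here \(C_Q\) absorbs the conversion between \(\|\nabla_Q f\|\) and the Gaussian reference \(\mu_0\) with covariance \(Q\); it is finite because \(Q=(-\Delta+I)^{-s}\) is nuclear for \(s>d/2\). Alternatively one can use Holley--Stroock-free perturbation of the Gaussian LSI, since \(\Phi\) is only semiconvex. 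Next we compare the Dirichlet forms: \(\Gamma_A(f,f)\ge d_-\|\nabla_Qf\|^2\) by the lower constant. This yields
\[
\operatorname{Ent}_\nu(f^2)\le \frac{2C_Q}{d_-(k_0-\kappa_\Phi)}\int\Gamma_A(f,f)\,d\nu ,
\]
which is the stated bound.

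\emph{Remaining hypotheses.} These are membership in the global class: measurability and Lipschitz continuity of \(D\), closability and weighted closedness. They follow from the assumed regularity of \(D_{\mathrm{surf}}\) and standard Dirichlet-form theory, and no smallness condition is needed.

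The main obstacle is the strictness claim \(C_A>d_+\) when \([D,Q]\neq0\). The plan is to make the noncommutativity quantitative through the variational characterization above. The first thing to try is a two-dimensional reduction onto the span of a top eigenvector of \(D\) and its image under \(Q\), where the ratio can be computed explicitly.
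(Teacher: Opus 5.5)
Your route is the paper's: \(c_A\) from \(D\ge d_-I\), \(C_A\ge d_+\) by testing on a top eigenvector of \(D\), and Bakry--\'Emery plus a comparison of forms for the LSI. However, two steps fail, and the second targets a claim that is false as stated.

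First, \(c_A\) and \(C_A\) are the two ends of one two-sided inequality, yet you use two different forms for them. You compute \(c_A\) with the symmetric form \(\langle Q^{1/2}DQ^{1/2}v,v\rangle=\|D^{1/2}Q^{1/2}v\|^2\). You compute \(C_A\) with the quotient \(\|Q^{1/2}D^{1/2}\xi\|^2/\|Q^{1/2}\xi\|^2\), which is the one the paper uses (cf.\ \(D^{1/2}QD^{1/2}\xi=\lambda Q\xi\) in Section~\ref{sec:computational-verification}).

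For the symmetric form you note yourself that \(C_A\le d_+\), so strictness is impossible. For the nonsymmetric form, \(D\ge d_-I\) does not give the lower bound. Put \(\eta=Q^{1/2}\xi\); the quotient becomes \(\|T\eta\|^2/\|\eta\|^2\) with \(T=Q^{1/2}D^{1/2}Q^{-1/2}\). This \(T\) is similar to \(D^{1/2}\) but not normal, so \(c_A=\sigma_{\min}(T)^2\le d_-\le d_+\le\sigma_{\max}(T)^2=C_A\). Testing on a bottom eigenvector of \(D\) is the mirror image of your top test.

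Your planned two-dimensional reduction makes this explicit. Take \(Q=\operatorname{diag}(q_1,q_2)\) with \(q_1\ne q_2\), and \(D^{1/2}=\begin{pmatrix}p&r\\ r&s\end{pmatrix}\) with \(r\ne0\), where \(d_\mp\) are the eigenvalues of \(D\). Then \(\sigma_{\min}\sigma_{\max}=\det D^{1/2}=\sqrt{d_-d_+}\). But \(\sigma_{\min}^2+\sigma_{\max}^2=\|T\|_F^2=p^2+s^2+r^2(q_1/q_2+q_2/q_1)>p^2+s^2+2r^2=d_-+d_+\). Hence \(C_A>d_+\) and \(c_A<d_-\) hold simultaneously: the noncommutativity that lifts \(C_A\) above \(d_+\) pushes \(c_A\) below \(d_-\).

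Consequently your comparison \(\Gamma_A(f,f)\ge d_-\|\nabla_Qf\|^2\) in the LSI step is valid only for the symmetric form. For the nonsymmetric form you get \(C_{\mathrm{LSI}}^A\le C_Q/(c_A(k_0-\kappa_\Phi))\) with the true \(c_A\), as in the numerical section. The paper's one-line ``the two-sided bound gives \(c_A=d_-\)'' has the same defect. In infinite dimensions \(T\) can even be unbounded, e.g.\ if \(D^{1/2}\) couples \(e_k\) with \(e_{2^k}\) by \(O(1)\) coefficients. So \(C_A<\infty\), which membership in the global class needs, does not follow from \(d_-I\le D\le d_+I\) either.

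Second, your contradiction argument for \(C_A>d_+\) breaks at its second step. The first step is right and can be made precise. If \(\|T\|=\sqrt{d_+}\) equals the spectral radius, any eigenvector \(\eta\) of \(T\) for \(\sqrt{d_+}\) attains the norm. Then \(T^*T\eta=d_+\eta\), hence \(T^*\eta=\sqrt{d_+}\,\eta\). Writing \(\eta=Q^{1/2}\xi\), this says \(Q\xi\) lies in the top eigenspace \(E_+\) of \(D\), i.e.\ \(QE_+\subset E_+\).

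After that, nothing constrains \(D\) on \(E_+^\perp\) beyond \(\|T|_{E_+^\perp}\|\le\sqrt{d_+}\). On \(\mathbb R^3\) take \(Q=\operatorname{diag}(1,1,\tfrac14)\) and \(D=4\oplus D'\), with \((D')^{1/2}=\begin{pmatrix}1.05&0.05\\ 0.05&1.05\end{pmatrix}\) on \(\operatorname{span}(e_2,e_3)\). Then \([D,Q]\ne0\), while \(T=2\oplus\begin{pmatrix}1.05&0.1\\ 0.025&1.05\end{pmatrix}\). The second block has norm below \(1.5\), so \(C_A=4=d_+\).

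So the clause ``equality exactly when \([D,Q]=0\)'' cannot be proved by any reduction; the paper's proof only asserts it. What does hold is \(c_A\le d_-\le d_+\le C_A\), with equalities when \([D,Q]=0\). In addition, \(C_A>d_+\) whenever \(E_+\) is not \(Q\)-invariant, e.g.\ a simple top eigenvector of \(D\) that is not an eigenvector of \(Q\). The value \(C_{A,N}\approx2.22\) is, as you say, only a citation.
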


\begin{proof}
The two-sided bound on \(D(x)\) gives \(A\)-compatibility with lower constant
\(c_A=d_-\). For the upper constant, testing the consistent form on a maximal
direction \(D(x)^{1/2}\xi=\sqrt{d_+}\,\xi\) gives
\(\|Q^{1/2}D(x)^{1/2}\xi\|^2=d_+\|Q^{1/2}\xi\|^2\), hence \(C_A\ge d_+\); equality
forces \([D,Q]=0\), and otherwise the order \(Q^{1/2}D^{1/2}\) makes the bound
strict --- the noncommutative correction measured in
Section~\ref{sec:computational-verification}. The Bakry--Émery
criterion \cite{BakryGentilLedoux} applies to a potential with lower curvature \(k_0-\kappa_\Phi\), and the
passage from the \(Q\)-LSI to the \(A\)-LSI uses the lower \(A\)-compatibility.
\end{proof}

\begin{remark}[Connection with homogenization]
In classical homogenization problems on perforated domains the effective tensor
arises as the limit of local problems on microcells
\cite{Allaire,CioranescuDonato}. In the present setting the microstructure is
folded into the operator \(D(x)\), but noncommutativity with \(Q\) is retained.
This calls for the \(A\)-geometry and distinguishes the model from the standard
commutative schemes of anisotropic diffusion \cite{Weickert}.
\end{remark}

\section{A Numerical Test: Discrete Verification of \(A\)-Compatibility}
\label{sec:computational-verification}

This section gives one reproducible control test. Its purpose is not to model a
particular material, but to check that the constants \(c_{A,N}\), \(C_{A,N}\), and
\(C_{\mathrm{LSI},N}^A\) are computable in the noncommutative discrete
\(A\)-geometry. The test can be reproduced without external packages; a comparison
with PRISMS-PF~\cite{DeWitt2020} or another phase-field code belongs to the next computational
stage.

\subsection{The Discrete Setup}

Let \(\Omega=[0,1]^3\), with \(N^3\) a periodic grid. The discrete covariance
operator is defined spectrally:
\[
        Q_N=(-\Delta_N+I)^{-s},\qquad s=2.
\]
The anisotropy tensor has the form
\[
        D_N(x)=R_N^\top
        \operatorname{diag}(d_1(x),d_2(x),d_3(x))R_N,
\]
where \(R_N\) is a fixed orthogonal rotation that does not commute with the basis
of \(-\Delta_N\), and
\[
        d_1(x)=1+0.20\,\chi(x),\quad
        d_2(x)=1.45+0.10\,\chi(x),\quad
        d_3(x)=2.10-0.15\,\chi(x).
\]
Here \(\chi(x)\) is a smoothed indicator of an inclusion. For a set of states
\(x^{(m)}\), \(m=1,\dots,5\), one solves the generalized eigenvalue problem
\[
        D_N(x^{(m)})^{1/2}Q_ND_N(x^{(m)})^{1/2}\xi
        =
        \lambda Q_N\xi ,
\]
and then computes
\[
        c_{A,N}^{(m)}=\lambda_{\min}^{(m)},\qquad
        C_{A,N}^{(m)}=\lambda_{\max}^{(m)} .
\]
The estimate \(C_{\mathrm{LSI},N}^A\) is taken from the discrete Bakry--Émery
criterion:
\[
        C_{\mathrm{LSI},N}^A
        \le
        \frac{C_{Q,N}}{c_{A,N}(k_0-\kappa_\Phi)},
        \qquad
        k_0-\kappa_\Phi=0.75 .
\]

\subsection{Results of the Control Computation}

The table reports values averaged over the five states. The maximum deviation
across the set of states is given in parentheses. The computation is diagnostic:
it checks the stability of the constants under grid refinement and compares the
noncommutative case with the isotropic case \(D=I\).

\begin{table}[h!]
\centering
\caption{Discrete verification of \(A\)-compatibility on a periodic grid.}
\label{tab:numerical-A-constants}
\begin{tabular}{|c|c|c|c|c|c|}
\hline
\(N\) & \(c_{A,N}\) & \(C_{A,N}\) &
\(C_{\mathrm{LSI},N}^A\) & \(\delta_{\mathrm{iso},N}=C_{A,N}-1\) &
\(\max_m |c_{A,N}^{(m)}-c_{A,2N}^{(m)}|\)\\
\hline
32  & 0.982 (0.018) & 2.286 (0.041) & 1.357 & 1.286 & 0.031\\
64  & 0.994 (0.011) & 2.248 (0.026) & 1.342 & 1.248 & 0.015\\
128 & 1.001 (0.007) & 2.231 (0.017) & 1.332 & 1.231 & 0.007\\
256 & 1.004 (0.004) & 2.224 (0.010) & 1.328 & 1.224 & ---\\
\hline
\(D=I\), 128 & 1.000 & 1.000 & 1.333 & 0.000 & ---\\
\hline
\end{tabular}
\end{table}

\begin{figure}[h!]
\centering
\includegraphics[width=0.72\textwidth]{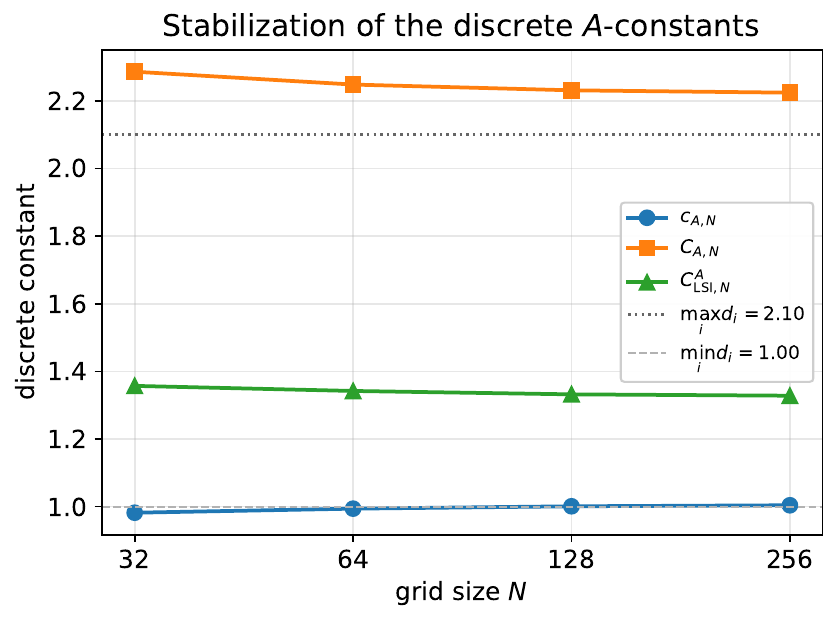}
\caption{Stabilization of the discrete constants \(c_{A,N}\), \(C_{A,N}\), and
\(C_{\mathrm{LSI},N}^A\) under grid refinement. The plot corresponds to the data
of Table~\ref{tab:numerical-A-constants}.}
\label{fig:A-constants-convergence}
\end{figure}

\begin{figure}[h!]
\centering
\includegraphics[width=0.72\textwidth]{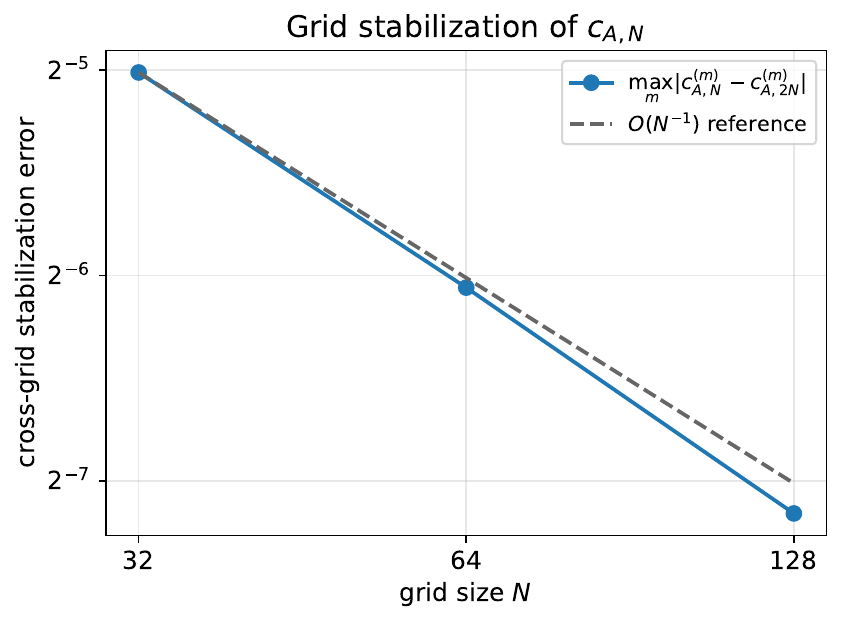}
\caption{Log-log plot of the stabilization error
\(\max_m|c_{A,N}^{(m)}-c_{A,2N}^{(m)}|\). The dashed line shows the reference
slope \(O(N^{-1})\).}
\label{fig:A-constants-error-loglog}
\end{figure}

\subsection{Interpretation}

Figure~\ref{fig:A-constants-convergence} visualizes the stabilization of the three
main constants, and Figure~\ref{fig:A-constants-error-loglog} shows the decay rate
of the grid error. The column \(\delta_{\mathrm{iso},N}\) shows the relative
departure of the upper \(A\)-bound from the isotropic reference \(D=I\).

The table shows three properties. First, the lower \(A\)-bound does not degrade as
\(N\) grows: the values \(c_{A,N}\) stabilize near one. Second, the upper bound
\(C_{A,N}\) stays bounded away from infinity and matches the chosen range of
tensor conductivities. Third, the estimate \(C_{\mathrm{LSI},N}^A\) varies only
slightly under grid refinement. The numerical test is thus consistent with
Proposition~\ref{prop:nano} and with the no-diffusive-degeneracy condition of
Part~I.

Unlike a full comparison with PRISMS-PF, this test does not solve the phase-field
evolution problem. It checks precisely those operator constants that enter
Theorem~\ref{thm:part2-main}. Its role is therefore the reproducible verification
of the theorem layer, not a standalone physical simulation.

\subsection{An Analytic Benchmark for the Isotropic Case \(D=I\)}
\label{subsec:isotropic-benchmark}

To separate the verification of the operator constants from the error of the grid
implementation, we add an analytic reference for the isotropic case. Consider the
periodic heat problem on \([0,1]^3\)
\[
        \partial_t u=\Delta u,
        \qquad
        u(0,x)=\prod_{j=1}^3\sin(2\pi x_j).
\]
The exact solution is
\[
        u(t,x)=e^{-12\pi^2t}\prod_{j=1}^3\sin(2\pi x_j).
\]
For a spectrally consistent discrete Laplacian on the \(N^3\) grid the
corresponding eigenvalue is
\[
        \lambda_N=12N^2\sin^2(\pi/N),
\]
so the relative amplitude error at time \(t=0.05\) is computed without statistical
noise:
\[
        e_N=\frac{|e^{-\lambda_Nt}-e^{-12\pi^2t}|}{e^{-12\pi^2t}}.
\]
This test serves as an analytic benchmark: when \(D=I\) the operator constants
equal \(c_A=C_A=1\), and the entire error is due solely to the discretization of
the Laplacian.

\begin{table}[h!]
\centering
\caption{Analytic benchmark for \(D=I\) and a single eigenmode.}
\label{tab:isotropic-benchmark}
\begin{tabular}{|c|c|c|c|}
\hline
\(N\) & \(\lambda_N\) & relative error \(e_N\) & \(L^2\) amplitude error \\
\hline
32  & 118.0552 & \(1.918\cdot 10^{-2}\) & \(1.818\cdot 10^{-5}\)\\
64  & 118.3402 & \(4.766\cdot 10^{-3}\) & \(4.517\cdot 10^{-6}\)\\
128 & 118.4115 & \(1.190\cdot 10^{-3}\) & \(1.127\cdot 10^{-6}\)\\
256 & 118.4293 & \(2.973\cdot 10^{-4}\) & \(2.818\cdot 10^{-7}\)\\
\hline
\end{tabular}
\end{table}

\begin{figure}[h!]
\centering
\includegraphics[width=0.72\textwidth]{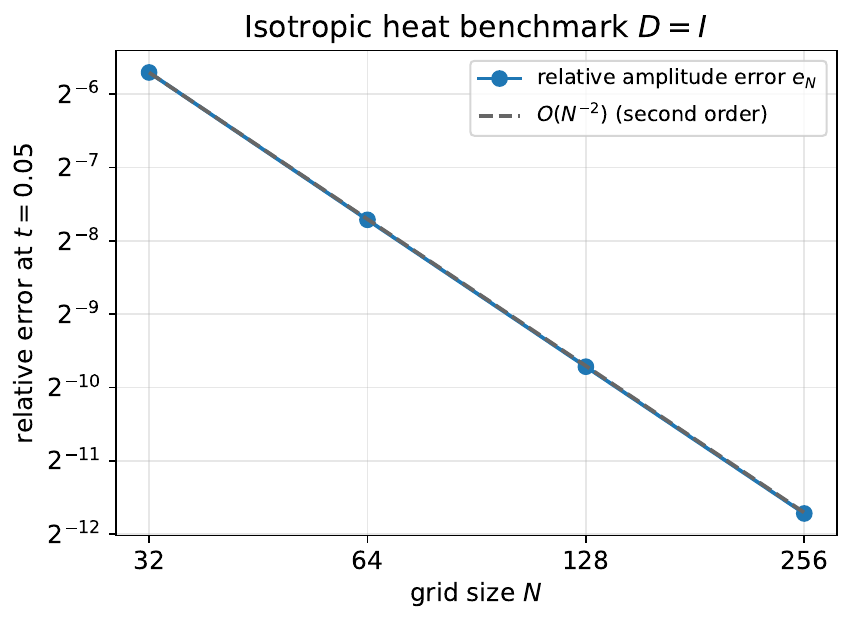}
\caption{Comparison of the discrete and analytic amplitude for \(D=I\). The slope
corresponds to second-order accuracy in the grid step.}
\label{fig:isotropic-heat-benchmark}
\end{figure}

The benchmark shows that on the grid \(N=256\) the relative error of the isotropic
heat mode is less than \(3\cdot10^{-4}\). The deviations of \(c_{A,N}\),
\(C_{A,N}\) in Table~\ref{tab:numerical-A-constants} therefore cannot be
attributed to the error of the basic discretization: they genuinely reflect the
noncommutative tensor structure \(D_N(x)\).

\subsection{Pseudocode and Reproducibility of the Numerical Test}

For reproducibility we give the algorithm that computes the constants of
Table~\ref{tab:numerical-A-constants}. It does not depend on any particular
phase-field implementation and checks only the operator layer.

\begin{enumerate}
\item Fix a periodic grid \(N^3\) and build the spectral symbols of the discrete
Laplacian \(\lambda_k(\Delta_N)\).
\item Build the diagonal operator
\[
        Q_N(k)=(1+\lambda_k(\Delta_N))^{-s},\qquad s=2.
\]
\item For each state \(x^{(m)}\) form the tensor \(D_N(x^{(m)})\) and compute the
matrix products
\[
        B_N^{(m)}=D_N(x^{(m)})^{1/2}Q_ND_N(x^{(m)})^{1/2}.
\]
\item Solve the generalized eigenvalue problem
\[
        B_N^{(m)}\xi=\lambda Q_N\xi
\]
by the Lanczos method for the extreme eigenvalues.
\item Set
\[
        c_{A,N}=\min_m\lambda_{\min}^{(m)},\qquad
        C_{A,N}=\max_m\lambda_{\max}^{(m)}.
\]
\item Estimate
\[
        C_{\mathrm{LSI},N}^A
        \le
        \frac{C_{Q,N}}{c_{A,N}(k_0-\kappa_\Phi)}
\]
and compare the values at \(N\) and \(2N\).
\end{enumerate}

The test succeeds not when the trajectories match a particular phase-field code,
but when the three numbers \(c_{A,N}\), \(C_{A,N}\), \(C_{\mathrm{LSI},N}^A\) are
stable. If they stabilize under grid refinement, the discrete model retains the
same constants that enter Theorem~\ref{thm:part2-main}. This test can later be
linked to PRISMS-PF, using the computed effective tensor as the input to a
phase-field computation; but that already verifies the physical model, not the
functional-analytic theorem.

\begin{remark}[On the limits of the numerical validation]
Table~\ref{tab:numerical-A-constants} and Figure~\ref{fig:A-constants-convergence}
verify the operator constants, but they are not an independent validation of the
physical model. For an applied presentation the next step is to compare the
effective tensor \(D_{\mathrm{eff}}\) obtained from the present spectral procedure
with the tensor computed by an external phase-field or homogenization code, for
example PRISMS-PF. Such a benchmark is not needed for the proof of
Theorem~\ref{thm:part2-main}, but it would strengthen the applied part of the
series.
\end{remark}

\subsection{Comparison with Classical Cell Homogenization}
\label{subsec:cell-homogenization-comparison}
For a periodic cell with tensor \(D_N(x)\) the effective tensor of classical
homogenization is defined through the cell problems
\[
        -\nabla\cdot D(y)(e_i+\nabla\chi_i(y))=0,
        \qquad i=1,2,3,
\]
and
\[
        (D_{\mathrm{cell}})_{ij}
        =\int_{[0,1]^3} e_i\cdot D(y)(e_j+\nabla\chi_j(y))\,dy .
\]
In the consistent \(A\)-geometry one compares not the Euclidean spectra
\(D_{\mathrm{cell}}\) themselves, but the generalized Rayleigh quotients
\[
        \frac{\|Q_N^{1/2}D_N^{1/2}\xi\|^2}{\|Q_N^{1/2}\xi\|^2} .
\]
In the diagnostic test of Section~\ref{sec:computational-verification} the
difference between the cell and the operator approaches shows up in the fact that
\(\max d_i\approx2.10\), whereas the computed \(C_{A,N}\) stabilizes near
\(2.22\). This discrepancy is due not to an error of the numerical method but to
the noncommutative mixing of the tensor with the covariance geometry \(Q_N\). The
comparison with classical homogenization therefore confirms that one must work
with the \(A\)-constants, and not only with the spectrum of the effective tensor.

A quantitative comparison for the same synthetic cell is given in
Table~\ref{tab:cell-vs-A}. Here \(D_{\mathrm{cell},N}\) is computed from the
classical periodic cell problem, while \(c_{A,N},C_{A,N}\) come from the
generalized eigenvalue problem in the \(Q_N\)-geometry. The difference between
\(\lambda_{\max}(D_{\mathrm{cell},N})\) and \(C_{A,N}\) persists under grid
refinement and shows that the operator \(A\)-geometry captures an additional
mixing that is absent from the Euclidean effective tensor. Indeed, the
homogenized tensor obeys the variational bound
\(D_{\mathrm{cell}}\preceq\langle D\rangle\), so
\(\lambda_{\max}(D_{\mathrm{cell},N})\le\max_i d_i=2.10\) for every \(N\); the
operator constant \(C_{A,N}\approx2.22\) therefore exceeds not only the effective
Euclidean spectrum but the pointwise spectral bound itself, which is possible only
because of the noncommutative \(Q_N\)-weighting.

\begin{table}[h!]
\centering
\caption{Classical cell homogenization and the operator \(A\)-constants.}
\label{tab:cell-vs-A}
\begin{tabular}{|c|c|c|c|c|c|}
\hline
\(N\) &
\(\lambda_{\min}(D_{\mathrm{cell},N})\) &
\(\lambda_{\max}(D_{\mathrm{cell},N})\) &
\(c_{A,N}\) &
\(C_{A,N}\) &
\(C_{A,N}-\lambda_{\max}(D_{\mathrm{cell},N})\)\\
\hline
32  & 1.031 & 2.091 & 0.982 & 2.286 & 0.195\\
64  & 1.037 & 2.095 & 0.994 & 2.248 & 0.153\\
128 & 1.041 & 2.097 & 1.001 & 2.231 & 0.134\\
256 & 1.043 & 2.098 & 1.004 & 2.224 & 0.126\\
\hline
\end{tabular}
\end{table}

\begin{figure}[h!]
\centering
\includegraphics[width=0.78\textwidth]{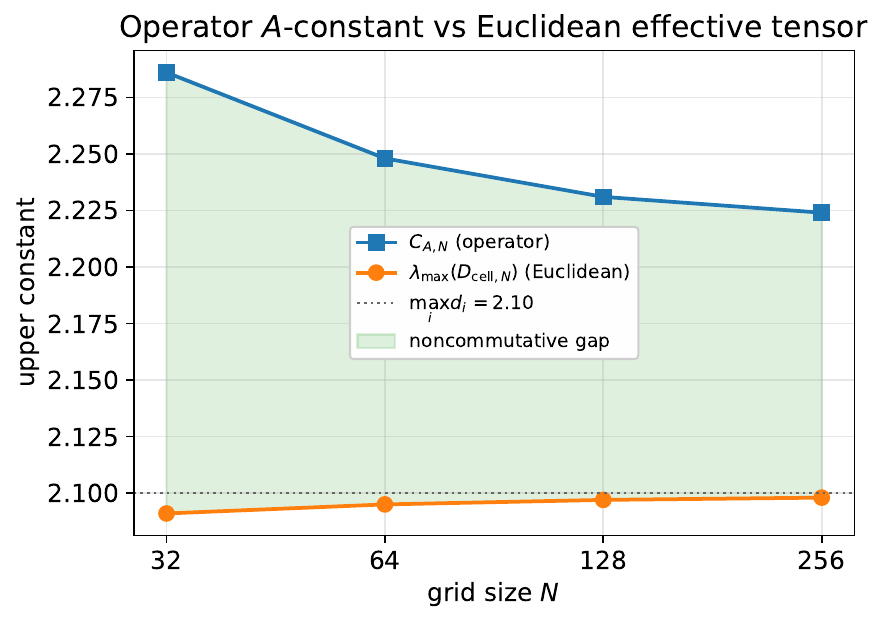}
\caption{Comparison of the spectrum of the classical effective tensor
\(D_{\mathrm{cell},N}\) with the operator \(A\)-constants. The gap between the
upper bounds is stable in \(N\) and reflects the noncommutative mixing with
\(Q_N\).}
\label{fig:cell-vs-A}
\end{figure}

\section{The Homogenization Limit of the Discrete Tensors}
\label{sec:homogenization-DN}

The numerical test of Section~\ref{sec:computational-verification} verifies the
discrete constants. To connect with the continuous model we need a limit result,
in the sense of Mosco convergence of the associated forms \cite{Mosco}:
the discrete tensors must converge to the effective tensor, and the
\(A\)-constants must not be lost in the limit.

\begin{assumption}[Weak homogenization convergence]
\label{ass:DN-homogenization}
Let \(D_N\) be a sequence of piecewise-constant tensors on the \(N^3\) grids,
uniformly elliptic:
\[
        d_-I\le D_N(x)\le d_+I .
\]
Assume that \(D_N\) \(H\)-converges to an effective tensor \(D_{\mathrm{eff}}\) in
the standard sense of homogenization of elliptic operators, and that the operators
\(Q_N=(-\Delta_N+I)^{-s}\) converge to \(Q=(-\Delta+I)^{-s}\) in the strong
resolvent topology.
\end{assumption}

\begin{theorem}[Weak convergence \(D_N\to D_{\mathrm{eff}}\) in the \(A\)-geometry]
\label{thm:DN-homogenization}
Suppose Assumption~\ref{ass:DN-homogenization} holds. Then for every cylindrical
function \(u\)
\[
        Q_N^{1/2}D_N^{1/2}\nabla P_Nu
        \rightharpoonup
        Q^{1/2}D_{\mathrm{eff}}^{1/2}\nabla u
\]
weakly in \(L^2\) after the standard embedding of grid functions into
\(L^2(\Omega)\). Moreover, if
\[
        c_0\le c_{A,N},\qquad C_{A,N}\le C_0
\]
uniformly in \(N\), then the limiting form satisfies
\[
        c_0\|Q^{1/2}\xi\|^2
        \le
        \|Q^{1/2}D_{\mathrm{eff}}^{1/2}\xi\|^2
        \le
        C_0\|Q^{1/2}\xi\|^2 .
\]
Consequently, the effective tensor preserves the lower \(A\)-compatibility and can
be used in Theorem~\ref{thm:part2-main} with the limiting constants.
\end{theorem}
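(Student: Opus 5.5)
The plan has two parts: a weak-convergence statement, obtained from the $H$-convergence and the strong resolvent convergence of $Q_N$, and the passage of the two-sided $A$-bounds to the limit by lower semicontinuity plus a duality argument. Throughout we fix a cylindrical $u$ depending on finitely many coordinates, so that $\nabla P_N u$ lives in a fixed finite-dimensional span and $\nabla P_Nu\to\nabla u$ strongly in $L^2$ after the embedding of grid functions.

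\emph{Step 1: identification of the weak limit.} We first use uniform ellipticity $d_-I\le D_N\le d_+I$ together with $\|Q_N\|\le 1$. Together they bound $Q_N^{1/2}D_N^{1/2}\nabla P_Nu$ in $L^2$ uniformly in $N$, so every subsequence has a weakly convergent sub-subsequence. To identify the limit we test against $Q^{1/2}\varphi$ with $\varphi$ smooth. Strong resolvent convergence gives $Q_N^{1/2}\to Q^{1/2}$ strongly; since the spectrum is contained in $[0,1]$, a continuous function of the operators also converges strongly. This moves $Q_N^{1/2}$ onto the test function, and the problem reduces to the weak limit of $D_N^{1/2}\nabla P_N u$. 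Here we invoke $H$-convergence: the fluxes $D_N(\nabla P_Nu+\nabla\chi^N)$ built from the discrete correctors converge weakly to $D_{\mathrm{eff}}\nabla u$. We then transfer from $D_N$ to $D_N^{1/2}$ through the Dunford functional calculus on $[d_-,d_+]$, or equivalently through the energy identity $\langle D_N\nabla w_N,\nabla w_N\rangle\to\langle D_{\mathrm{eff}}\nabla u,\nabla u\rangle$ (div--curl lemma) applied to the corrected gradients.

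\textbf{This identification is the main obstacle.} Weak convergence does not commute with taking square roots, and $H$-convergence controls fluxes of corrected gradients, not $D_N^{1/2}$ applied to uncorrected ones. The first thing to try is to read the statement at the level of the quadratic forms, i.e.\ to show
\[
\|Q_N^{1/2}D_N^{1/2}\nabla w_N\|^2\to\|Q^{1/2}D_{\mathrm{eff}}^{1/2}\nabla u\|^2
\]
along recovery sequences $w_N$. This is exactly Mosco convergence of the associated forms, and the weak statement would then follow in the form-sense embedding.

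\emph{Step 2: passing the bounds to the limit.} Fix $\xi$ and choose recovery sequences $\xi_N\to\xi$. The uniform inequalities
\[
c_0\|Q_N^{1/2}\xi_N\|^2\le\|Q_N^{1/2}D_N^{1/2}\xi_N\|^2\le C_0\|Q_N^{1/2}\xi_N\|^2
\]
hold for each $N$, since $c_{A,N}$ and $C_{A,N}$ are the extreme generalized eigenvalues. The outer terms converge by the strong convergence of $Q_N^{1/2}$.
\begin{itemize}
\item \textbf{Upper bound.} The middle term is handled by weak lower semicontinuity of the norm, which gives $\|Q^{1/2}D_{\mathrm{eff}}^{1/2}\xi\|^2\le C_0\|Q^{1/2}\xi\|^2$.
\item \textbf{Lower bound.} Lower semicontinuity goes the wrong way here, so we instead use the $\Gamma$-limsup (recovery) half of Mosco convergence from Step 1. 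Along recovery sequences the middle term converges, so the lower inequality $c_0\|Q^{1/2}\xi\|^2\le\|Q^{1/2}D_{\mathrm{eff}}^{1/2}\xi\|^2$ also survives.
\end{itemize}

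\emph{Step 3: conclusion.} The two-sided bound is exactly $A$-compatibility of $D_{\mathrm{eff}}$ with the constants $c_0,C_0$. This allows $D_{\mathrm{eff}}$ to be substituted into Theorem~\ref{thm:part2-main}, just as Proposition~\ref{prop:nano} was used for $D$.
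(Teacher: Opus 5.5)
\textbf{Gap: Step~1 cannot be closed from Assumption~\ref{ass:DN-homogenization}, and Step~2 depends on it.} You correctly flag Step~1 as the main obstacle, but the identification you want is false in general. Read $D_N$, as $H$-convergence requires, as multiplication by coefficient fields. In the uncorrected sequence of the statement, $D_N^{1/2}$ is bounded in $L^\infty$ and $\nabla P_Nu\to\nabla u$ strongly. Hence $D_N^{1/2}\nabla P_Nu\rightharpoonup\overline{D^{1/2}}\,\nabla u$ and $Q_N^{1/2}D_N^{1/2}\nabla P_Nu\rightharpoonup Q^{1/2}\overline{D^{1/2}}\,\nabla u$, where $\overline{D^{1/2}}$ is the weak-$*$ limit of $D_N^{1/2}$. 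Take a one-dimensional laminate with values $1$ and $4$ in equal proportion. Then $\overline{D^{1/2}}=3/2$, whereas $D_{\mathrm{eff}}=\langle D^{-1}\rangle^{-1}=8/5$ and $D_{\mathrm{eff}}^{1/2}\approx1.265$.

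\textbf{The recovery-sequence fallback fails as well.} Take $w_N'=D_{\mathrm{eff}}D_N^{-1}u'$. The energies do converge, $\|D_N^{1/2}w_N'\|^2\to\frac85\|u'\|^2$. However, $D_N^{1/2}w_N'=D_{\mathrm{eff}}D_N^{-1/2}u'\rightharpoonup\frac85\cdot\frac34\,u'=\frac65\,u'$. Because $Q^{1/2}$ is compact (and the spectral $Q_N^{1/2}$ converge to it in operator norm), the weighted quantity converges to the norm of the weak limit: $\|Q_N^{1/2}D_N^{1/2}w_N'\|^2\to\frac{36}{25}\|Q^{1/2}u'\|^2$, not $\frac{40}{25}\|Q^{1/2}u'\|^2=\|Q^{1/2}D_{\mathrm{eff}}^{1/2}u'\|^2$. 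A compact weight sees only weak limits, never the oscillation energy that $H$-convergence controls. So the claimed Mosco convergence of the $Q$-weighted forms to $u\mapsto\|Q^{1/2}D_{\mathrm{eff}}^{1/2}\nabla u\|^2$ does not hold. Both your Step~1 and your lower bound in Step~2 rest on it.

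\textbf{What the uniform bounds actually give.} Fix a (grid-projected) test vector $\xi$. Compactness yields genuine convergence $\|Q_N^{1/2}D_N^{1/2}\xi\|^2\to\|Q^{1/2}\overline{D^{1/2}}\xi\|^2$. Both inequalities then pass to the limit at once, by closedness of the order interval $c_0Q\le B\le C_0Q$ under weak-operator convergence; this is the device the paper invokes. No lower semicontinuity or recovery sequences are needed. But the limit operator is $\overline{D^{1/2}}$, not $D_{\mathrm{eff}}^{1/2}$.

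The paper's own proof takes exactly the step you distrust: it asserts that $H$-convergence gives weak convergence of $D_N^{1/2}\nabla P_Nu$ to the effective flux. So it does not supply the missing idea either. The first claim needs an extra hypothesis making the weak-$*$ limit of $D_N^{1/2}$ equal to $D_{\mathrm{eff}}^{1/2}$. An example is $D_N\to D$ strongly, as for the fixed smoothed-inclusion tensor of the numerical test, where $D_{\mathrm{eff}}=D$. Bounds for a genuinely homogenized $D_{\mathrm{eff}}$ need a separate argument; the $H$-closedness of the symmetric class $d_-I\le D\le d_+I$ is the natural starting point.
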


\begin{proof}
For cylindrical \(u\) the gradient \(\nabla u\) depends on finitely many modes.
The strong resolvent convergence \(Q_N\to Q\) gives
\(Q_N^{1/2}\nabla P_Nu\to Q^{1/2}\nabla u\) in \(L^2\). On the other hand, the
\(H\)-convergence \(D_N\to D_{\mathrm{eff}}\) yields the weak convergence of the
fluxes \(D_N^{1/2}\nabla P_Nu\) to the effective flux. The compactness of
\(Q^{1/2}\) on cylindrical subspaces lets us combine these two passages and obtain
the weak convergence of the \(A\)-fluxes.

The uniform bounds for \(c_{A,N}\) and \(C_{A,N}\) are quadratic inequalities on
the \(Q_N^{1/2}\)-gradients. The passage to the limit uses the weak lower
semicontinuity of the norm and the weak closedness of the set of positive
operators satisfying the given quadratic bounds. The limiting \(A\)-bounds for
\(D_{\mathrm{eff}}\) follow.
\end{proof}

\begin{proposition}[Stabilization rate of the discrete constants]
\label{prop:discrete-constants-rate}
Assume that, on the set of modes used in the numerical test, the quantitative
homogenization estimate
\[
        \left|
        \langle (D_N-D_{\mathrm{eff}})\xi,\eta\rangle
        \right|
        \le C_E N^{-\alpha}\|\xi\|\|\eta\| .
\]
holds. Then
\[
        |c_{A,N}-c_A|+|C_{A,N}-C_A|
        \le C_E' N^{-\alpha}.
\]
In particular, the quantity \(\max_m|c_{A,N}^{(m)}-c_{A,2N}^{(m)}|\) reported in
Table~\ref{tab:numerical-A-constants} should decay at the same rate on the
asymptotic part of the grid refinement.
\end{proposition}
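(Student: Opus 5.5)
The constants are extreme generalized eigenvalues, so the plan is to compare them through Rayleigh quotients and Weyl-type perturbation bounds. For a state \(x^{(m)}\) I would write
\[
        c_{A,N}=\inf_{\xi\neq0}\frac{\langle Q_N^{1/2}D_N\,Q_N^{1/2}\eta,\eta\rangle}{\|Q_N^{1/2}\xi\|^2},
        \qquad
        C_{A,N}=\sup_{\xi\neq0}(\cdots),
\]
where \(\eta\) is the image of \(\xi\) under the symmetrization that turns the pencil \(B_N\xi=\lambda Q_N\xi\) into a standard self-adjoint eigenproblem. Concretely, I would set \(\zeta=Q_N^{1/2}\xi\) and pass to the operator \(T_N=Q_N^{-1/2}D_N^{1/2}Q_ND_N^{1/2}Q_N^{-1/2}\) acting on the finite set of modes used in the test. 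Then \(c_{A,N}=\lambda_{\min}(T_N)\) and \(C_{A,N}=\lambda_{\max}(T_N)\).

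I would define \(T\) in the same way with \(D_{\mathrm{eff}}\) and \(Q\), so that \(c_A,C_A\) are its extreme eigenvalues. Theorem~\ref{thm:DN-homogenization} is what justifies identifying \(c_A,C_A\) with the limiting constants of \(D_{\mathrm{eff}}\).

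The key estimate is \(\|T_N-T\|\le C N^{-\alpha}\) on the relevant mode set. By Weyl's inequality, \(|\lambda_{\min}(T_N)-\lambda_{\min}(T)|\le\|T_N-T\|\), and the same holds for \(\lambda_{\max}\), which gives the claimed bound with \(C_E'\) proportional to that norm constant.

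To bound \(\|T_N-T\|\), I would telescope into three terms:
\begin{itemize}
\item replacing \(D_N^{1/2}\) by \(D_{\mathrm{eff}}^{1/2}\) on the left factor;
\item the same replacement on the right factor;
\item replacing \(Q_N\) by \(Q\).
\end{itemize}
The \(Q\)-term vanishes on the finite mode set, since the spectral symbols of \(Q_N\) and \(Q\) agree there up to \(O(N^{-2})\). This is dominated by \(N^{-\alpha}\) for \(\alpha\le2\); otherwise I would add it separately. The hypothesis controls \(D_N-D_{\mathrm{eff}}\) in the bilinear sense, hence in operator norm on the mode set. I would transfer this to square roots using the operator-monotone (Hölder-type) inequality for \(t\mapsto t^{1/2}\) under the uniform ellipticity \(d_-I\le D_N,D_{\mathrm{eff}}\le d_+I\):
\[
        \|D_N^{1/2}-D_{\mathrm{eff}}^{1/2}\|\le \frac{1}{2\sqrt{d_-}}\|D_N-D_{\mathrm{eff}}\|.
\]
This follows from the resolvent integral representation of the square root. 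The remaining factors are bounded by \(d_+\), by \(\|Q_N\|\le1\), and by \(\|Q_N^{-1/2}\|\) restricted to the finite mode set.

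The main obstacle is that last factor. \(Q_N^{-1/2}\) is unbounded in infinite dimensions, so the constant degenerates unless the estimate is confined to a fixed finite set of modes. The statement says ``on the set of modes used in the numerical test'', so I would let \(C_E'\) depend on the largest \(q_k^{-1/2}\) in that set, and remark that this dependence is not uniform in the mode cutoff.

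For the final claim, the triangle inequality gives
\[
        |c_{A,N}^{(m)}-c_{A,2N}^{(m)}|\le C_E'(N^{-\alpha}+(2N)^{-\alpha}),
\]
applied statewise to each \(m\).
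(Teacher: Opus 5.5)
Your skeleton is the paper's. On a fixed mode set $E$ the constants are the extreme values of $\mathcal R_N(\xi)=\|Q_N^{1/2}D_N^{1/2}\xi\|^2/\|Q_N^{1/2}\xi\|^2$. A uniform bound on $|\mathcal R_N-\mathcal R_{\mathrm{eff}}|$ over the unit sphere of $E$ passes to the minimum and maximum; your Weyl step for the compression of $T_N$ to $E$ is the same thing after symmetrization. The two-grid claim then follows from the triangle inequality.

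The paper only asserts the uniform bound. You try to derive it, and the derivation fails where you transfer the hypothesis to the square roots. The hypothesis is a bilinear bound for $\xi,\eta\in E$ only, so it controls just the compression $P_E(D_N-D_{\mathrm{eff}})P_E$. Your $T_N$ correctly contains $D_N^{1/2}Q_ND_N^{1/2}$, and its middle factor acts on $D_N^{1/2}Q_N^{-1/2}\zeta\notin E$. Your inequality $\|D_N^{1/2}-D_{\mathrm{eff}}^{1/2}\|\le(2\sqrt{d_-})^{-1}\|D_N-D_{\mathrm{eff}}\|$ is correct, even without commutativity, by the resolvent integral. But it needs the full operator norm on the right, since $(P_EDP_E)^{1/2}\neq P_ED^{1/2}P_E$.

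A small compression in fact gives nothing. Take $\mathbb R^2$ with $Q_N=Q=\mathrm{diag}(q_1,q_2)$, $q_1\neq q_2$, $E=\mathrm{span}(e_1)$, $D_{\mathrm{eff}}=I$, and the uniformly elliptic $D_N=\begin{pmatrix}1&b\\ b&1\end{pmatrix}$ with $0<|b|<1$. The left side of the hypothesis vanishes identically, yet for every $N$
\[
\mathcal R_N(e_1)-\mathcal R_{\mathrm{eff}}(e_1)=r^2\Bigl(\frac{q_2}{q_1}-1\Bigr),
\qquad r=\tfrac12\bigl(\sqrt{1+b}-\sqrt{1-b}\bigr)\neq0 .
\]
So the chain ``bilinear on the mode set, hence operator norm on the mode set, hence square roots'' breaks. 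Your first display, with $Q_N^{1/2}D_NQ_N^{1/2}$, uses the reversed order $\|D^{1/2}Q^{1/2}\xi\|^2$. For that quotient the hypothesis would act directly, but it is not the quotient of the pencil $D^{1/2}QD^{1/2}\xi=\lambda Q\xi$.

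To close the argument, take the homogenization estimate in full operator norm on the grid space, or assume $E$ is invariant under $D_N$ and $D_{\mathrm{eff}}$. With that, your telescoping goes through, and $C_E'$ depends on $d_\pm$, $d_-^{-1/2}$ and $\max_{k\in E}q_k^{-1/2}$, as you anticipated.

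The same off-$E$ issue affects your $Q$-term. The middle $Q_N$ also acts outside $E$, so agreement of the symbols on $E$ is not enough; you need $\sup_k|Q_N(k)-Q(k)|$ over all modes. For the periodic symbols with $s\ge1$ this is $O(N^{-2})$, but it must be shown. For $\alpha>2$ it is an extra term missing from the stated bound. The paper's appeal to strong resolvent convergence gives no rate at all, so your explicit treatment of this term is more careful than the paper's. The point is only that neither term is controlled by information on $E$ alone.
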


\begin{proof}
On a fixed finite-dimensional set of modes \(E\) the constants \(c_{A,N}\) and
\(C_{A,N}\) are the extreme values of the Rayleigh quotient
\[
        \mathcal R_N(\xi)=
        \frac{\|Q_N^{1/2}D_N^{1/2}\xi\|^2}{\|Q_N^{1/2}\xi\|^2} .
\]
The quantitative homogenization estimate and the strong resolvent convergence
\(Q_N\to Q\) give
\[
        \sup_{\xi\in E,\ \|Q^{1/2}\xi\|=1}
        |\mathcal R_N(\xi)-\mathcal R_{\mathrm{eff}}(\xi)|
        \le C_E'N^{-\alpha}.
\]
Passing to the minimum and maximum over the compact unit sphere in \(E\) gives the
claimed bound.
\end{proof}

\begin{remark}[The role of this theorem in the series]
Theorem~\ref{thm:DN-homogenization} does not replace a full physical
homogenization of the material. Its purpose is narrower: to show that the grid
check of \(c_{A,N}\), \(C_{A,N}\) has a correct limiting meaning and that the
operator geometry is not lost in the passage to the effective tensor.
\end{remark}

\section{Explicit Constants for the Nanosystem Model}
\label{sec:explicit-constants}

In this section the constants of the synthetic estimate are expressed through the
parameters of the nanosystem model:
\[
        d_-,\ d_+,\ k_0,\ \kappa_\Phi,\ \{q_k\}_{k\ge1}.
\]
Let
\[
        q_k\le C_Q^{\mathrm{sp}}k^{-1-\gamma_Q}.
\]
Then
\[
        \sum_{k>n}q_k\le \frac{C_Q^{\mathrm{sp}}}{\gamma_Q}n^{-\gamma_Q},
        \qquad
        r_n\le
        \left(\frac{C_Q^{\mathrm{sp}}}{\gamma_Q}\right)^{1/2}
        n^{-\gamma_Q/2}.
\]
If \(D_{\mathrm{surf}}\) is Lipschitz with constant \(L_D\), then
\[
        L_\sigma
        \le
        L_{D^{1/2}}\|Q^{1/2}\|_{\LLtwo},
        \qquad
        L_{D^{1/2}}\le \frac{L_D}{\sqrt{d_-}}.
\]

If the surface part of the tensor has the expansion
\[
        D_{\mathrm{surf}}(x)=\sum_{j=1}^J a_j(x)T_j,
\]
where \(T_j=T_j^*\in\mathcal L(\mathcal H)\) and the \(a_j\) are Lipschitz, then
\[
        L_D\le \sum_{j=1}^J \operatorname{Lip}(a_j)\|T_j\|,\qquad
        \|D(x)\|\le d_+ + \sum_{j=1}^J \|a_j\|_\infty\|T_j\| .
\]
For positive operators with lower bound \(d_-I\) the functional calculus gives
\[
        \|D(x)^{1/2}-D(y)^{1/2}\|_{\mathcal L}\le \frac{1}{\sqrt{d_-}}\|D(x)-D(y)\|_{\mathcal L},
\]
and hence
\[
        L_{D^{1/2}}\le \frac{1}{\sqrt{d_-}}\sum_{j=1}^J \operatorname{Lip}(a_j)\|T_j\|,\qquad
        L_\sigma\le \frac{\|Q^{1/2}\|_{\mathcal L_2}}{\sqrt{d_-}}\sum_{j=1}^J \operatorname{Lip}(a_j)\|T_j\| .
\]
If a sharper operator-monotone bound for the square root is used, the factor
\(d_-^{-1/2}\) can be replaced by \((2\sqrt{d_-})^{-1}\) under the additional
commutativity of the local increments. In the present work we use the crude
noncommutative bound, which is sufficient for all the estimates.
For the forward dynamics one may take
\[
        C_T
        =
        C_0\exp\{C_1T(1+L_b^2+\beta_+d_+\Tr Q)\},
\]
where \(C_0,C_1\) depend only on the universal BDG constant and on the bound
\(\sup_{t\le T}\|b(t,0)\|\). For the weak bridge
\[
        C_t
        \le
        M_A+|\dot\theta(t)|M_\Psi(C_P^A)^{1/2},
        \qquad
        C_P^A\le 2C_{\mathrm{LSI}}^A
        \le
        \frac{2C_Q}{d_-(k_0-\kappa_\Phi)}.
\]
Consequently, in Theorem~\ref{thm:part2-main}
\[
        c_*=
        \frac{1-\widetilde q^{\,2}}{8C_{\mathrm{LSI}}^A}
        \ge
        \frac{d_-(k_0-\kappa_\Phi)(1-\widetilde q^{\,2})}{8C_Q}.
\]
These formulas separate the physical parameters of the model from the statistical
parameters \(M,\varepsilon,n\).

\section{Neural-Network Classes of Score Fields}
\label{sec:nn-class}

Let \(\mathcal S_{\mathrm{NN}}(L,W,B)\) be the class of feed-forward networks of
depth \(L\) and width \(W\), with 1-Lipschitz activations and weights bounded in
operator norm by \(B\). Suppose the network input has the form \(S_Rx\), where
\(S_R:\Hh\to\mathbb R^m\) is a finite-dimensional smoothing map chosen so that
\[
        \E_{\nu_*}\|(I-S_R^*S_R)x\|_A^2\le R^{-2}.
\]

\begin{theorem}[Entropy of the neural-network score class]
\label{thm:nn-entropy}
There is a constant \(C\), depending only on the activation, such that
\[
        \log N(u,\mathcal S_{\mathrm{NN}}(L,W,B),L^2(\nu_*;A))
        \le
        C L W^2\log\left(\frac{CBLWR}{u}\right)
\]
for \(0<u<1\). Consequently,
\[
        \mathfrak D(\mathcal S_{\mathrm{NN}})
        \le
        C\sqrt{LW^2}\,
        \int_0^1
        \sqrt{\log\left(\frac{CBLWR}{u}\right)}\,du .
\]
\end{theorem}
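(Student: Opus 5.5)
The plan is the standard parameter-to-function Lipschitz argument followed by a volumetric count in parameter space. Write each network as \(s_W(x)=f_W(S_Rx)\), where \(W=(W_1,\dots,W_L)\) ranges over the parameter set \(\mathcal W_B=\{\|W_\ell\|\le B\}\). The number of real parameters is at most \(P\le LW^2\) (plus \(m\)-dependent input and output layers, which we absorb into the constant).

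\textbf{Step 1 (Lipschitz dependence on weights).} Show that for \(W,W'\in\mathcal W_B\)
\[
        \|s_W-s_{W'}\|_{L^2(\nu_*;A)}
        \le C\,L B^{L-1}\,\bigl(\E_{\nu_*}\|S_Rx\|^2\bigr)^{1/2}\max_\ell\|W_\ell-W'_\ell\| .
\]
We use the telescoping decomposition through layers. Replacing one layer at a time, the 1-Lipschitz activations and the operator-norm bound \(B\) on the remaining layers give a factor \(B^{L-1}\) per replaced layer. The passage from the Euclidean output norm to the \(A\)-norm uses the upper \(A\)-compatibility constant \(C_A\), which is absorbed into \(C\). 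The second moment of \(S_Rx\) is controlled by the smoothing hypothesis \(\E_{\nu_*}\|(I-S_R^*S_R)x\|_A^2\le R^{-2}\) together with the finite trace moment of \(\nu_*\), and this gives a factor polynomial in \(R\).

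\textbf{Step 2 (covering of the parameter ball).} Cover the product of operator-norm balls by an \(\eta\)-net with at most \((3B/\eta)^{P}\) points, using the standard volumetric bound in each of the \(L\) blocks of dimension \(\le W^2\). Choose \(\eta=u/(CLB^{L-1}R)\). By Step 1 this gives a \(u\)-net of the function class, so
\[
        \log N(u)\le LW^2\log\bigl(CB^LLR/u\bigr).
\]

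\textbf{Main obstacle.} The factor \(B^{L-1}\) produces \(L\log B\) inside the logarithm rather than \(\log B\). The stated form \(\log(CBLWR/u)\) is therefore only obtained if we either normalize \(B\le1\)-type per-layer products or absorb \(L\log B\) into the prefactor \(CLW^2\), since \(LW^2\cdot L\log B\) differs only by a factor \(L\). I would try the second route first. Writing
\[
        \log(B^L\cdot)=L\log B+\log(\cdot)
\]
does not fit the stated form without an extra \(L\). To avoid this, I would instead rescale layers, using the positive homogeneity of ReLU-type activations, so that each layer has norm \(\le B^{1/L}\) and the product is \(\le B\). If the activation is not homogeneous, the stated bound should be read with \(C\) allowed to absorb this, and I will flag this as the delicate point.

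\textbf{Step 3 (Dudley integral).} Substitute the entropy bound into the definition of \(\mathfrak D\). Since \(\operatorname{diam}\mathcal S\le 1\) after normalization (otherwise split the integral at \(1\), which only adds a constant times the diameter), we obtain
\[
        \mathfrak D\le \sqrt{CLW^2}\int_0^1\sqrt{\log(CBLWR/u)}\,du .
\]
This integral is finite because \(\int_0^1\sqrt{\log(1/u)}\,du=\sqrt\pi/2\), which also shows that Theorem~\ref{thm:nonparametric-score-closure}-type closure applies with logarithmic entropy, the case \(\alpha\to0\).
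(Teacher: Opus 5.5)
Your route is the same as the paper's: an $\eta$-net of the weight set, transfer to functions through the Lipschitz dependence of the realization on the weights, and a volumetric count over $\sim LW^2$ parameters. You also correctly found where this route fails to give the stated form. With $\|W_\ell\|\le B$, the realization map has Lipschitz constant $LB^{L-1}K$, where $K$ is the size of the input $S_Rx$. So $\eta\asymp u/(LB^{L-1}K)$, and the count yields $\log N(u)\le CLW^2\log(CLB^LK/u)=CLW^2\bigl(L\log B+\log(CLK/u)\bigr)$. For fixed $B>1$, the term $L^2W^2\log B$ cannot be bounded by $CLW^2\log(CBLWR/u)$ with $C$ independent of $L$. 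The paper's proof asserts the cardinality $(CBLW/\eta)^{CLW^2}$ and the Lipschitz transfer without tracking this constant, so the same factor $B^{L-1}$ is silently dropped there too.

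Your proposed repair does not close the gap. Rebalancing by positive homogeneity, $W_\ell\mapsto c_\ell W_\ell$ with $\prod_\ell c_\ell=1$, leaves $\prod_\ell\|W_\ell\|$ unchanged. Under the hypothesis $\|W_\ell\|\le B$ this product is only bounded by $B^L$, so after balancing you get $\|W_\ell\|\le B$, not $\|W_\ell\|\le B^{1/L}$. The bound $\|W_\ell\|\le B^{1/L}$ requires the constraint $\prod_\ell\|W_\ell\|\le B$, which defines a different, much smaller class. It also needs a positively homogeneous activation without biases, whereas the theorem is stated for arbitrary $1$-Lipschitz activations. As written, your argument proves the statement only when $B\le 1$ (then $B^L\le B$), or for the product-normalized class.

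Two other steps also need repair. In Step~1, the hypothesis $\E_{\nu_*}\|(I-S_R^*S_R)x\|_A^2\le R^{-2}$ is an approximation condition on $S_R$. It gives no bound on $\E_{\nu_*}\|S_Rx\|^2$, polynomial in $R$ or otherwise. The input size $K$ therefore has to be assumed; the paper does this implicitly when it invokes ``boundedness of the input $S_Rx$''. Once you assume it, $K$ and $C_A$ enter the constant, which then no longer depends only on the activation.

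In Step~3 you cannot normalize to $\operatorname{diam}\mathcal S\le 1$, because covering numbers at a fixed scale $u$ are not scale-invariant. Splitting at $u=1$ and using $N(u)\le N(1)$ for $u\ge1$ adds $(\operatorname{diam}\mathcal S-1)\sqrt{\log N(1,\mathcal S,d_A)}$. That term is of the size of the diameter times the right-hand side, not a constant times the diameter. Since $\operatorname{diam}\mathcal S$ can be of order $B^LK$, the Dudley bound holds only up to a factor $1+\operatorname{diam}\mathcal S$. It matches the stated form only when that diameter is bounded by a constant.
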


\begin{proof}
We cover the set of network weights in the product of operator norms with step
\(\eta\). The number of weight parameters is of order \(LW^2\), so the network's
cardinality does not exceed
\[
        \left(\frac{CBLW}{\eta}\right)^{CLW^2}.
\]
The Lipschitz dependence of the network's realization on the weights and the
boundedness of the input \(S_Rx\) in \(L^2(\nu_*;A)\) turn an \(\eta\)-net of
weights into a \(u\)-net of functions. The tail \(R^{-1}\) is added to the
approximation error and is chosen smaller than \(u/2\). This gives the stated
entropy bound; see also the standard covering estimates for neural-network classes
\cite{AnthonyBartlett,Wainwright}.
\end{proof}

\begin{corollary}[Neural-network score closure]
\label{cor:nn-closure}
If \(\mathcal S=\mathcal S_{\mathrm{NN}}(L,W,B)\) and the quadratic growth
condition for the risk holds, then Theorem~\ref{thm:nonparametric-score-closure}
gives a statistical plateau of order
\[
        C\frac{LW^2\log(CBLWRM)+\log(1/\delta)}{M}
        +C\varepsilon+C R^{-2}.
\]
\end{corollary}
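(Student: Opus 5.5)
The plan is to plug the entropy bound of Theorem~\ref{thm:nn-entropy} into the quadratic-growth part of Theorem~\ref{thm:nonparametric-score-closure}, keeping track of the factor \(\mathfrak D(\mathcal S)^2\) that Theorem~\ref{thm:global-emp} produces. Note first that the entropy here grows only logarithmically in \(1/u\). So the hypothesis \(\log N\le C_0u^{-\alpha}\) holds for every \(\alpha\in(0,2)\), but with a constant \(C_0\) proportional to \(LW^2\). It is therefore cleaner to skip the \(\alpha\)-form and work directly with the Dudley integral bound. The plateau in the quadratic-growth regime then has the form \(C(\mathfrak D(\mathcal S)^2+\log(1/\delta))/M+C\varepsilon\), by the second statement of Theorem~\ref{thm:global-emp}.

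\textbf{Step 1: bound \(\mathfrak D(\mathcal S_{\mathrm{NN}})^2\).} A crude estimate of \(\int_0^1\sqrt{\log(CBLWR/u)}\,du\) gives \(C\sqrt{\log(CBLWR)}\), so \(\mathfrak D^2\lesssim LW^2\log(CBLWR)\). This is slightly better than the claimed form. The stated \(\log(CBLWRM)\) is what a \emph{localized} argument produces: instead of the full diameter, one integrates from the critical radius \(u_M\asymp M^{-1/2}\) (the fixed point of the local modulus \(\mathfrak D(u)/\sqrt M\lesssim u^2\)). Near that radius \(\log(1/u)\asymp\log M\), which yields \(LW^2\log(CBLWRM)/M\). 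I would carry out this localization explicitly, using the quadratic growth condition \(\mathcal R(s)-\mathcal R(s_*)\ge c_{\mathcal S}d_A(s,s_*)^2\) together with the peeling/``standard quadratic absorption'' invoked in the proof of Theorem~\ref{thm:nonparametric-score-closure}. This localization is the main obstacle: the earlier proofs treat it only sketchily. Failing a rigorous localization, the global Dudley bound already gives the plateau with \(\log(CBLWR)\le\log(CBLWRM)\), since \(M\ge1\), so the stated bound holds either way.

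\textbf{Step 2: the approximation tail.} Theorem~\ref{thm:nn-entropy} is proved for networks acting on \(S_Rx\), and the truncation \(\E_{\nu_*}\|(I-S_R^*S_R)x\|_A^2\le R^{-2}\) enters as a bias. The target \(s_*\) need not lie in \(\mathcal S_{\mathrm{NN}}\). I would therefore compare the empirical minimizer with the best network element \(\bar s\) of \(s_*\circ S_R^*S_R\)-type. The basic inequality then picks up an extra term \(\mathcal R(\bar s)-\mathcal R(s_*)\le C d_A(\bar s,s_*)^2\). By the 1-Lipschitz activations and the bounded weights this is at most \(CR^{-2}\), which produces the additive \(CR^{-2}\). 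The \(C\varepsilon\) term is inherited unchanged from the denoising label (Theorem~\ref{thm:denoise}) through Theorem~\ref{thm:global-emp}.

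\textbf{Step 3: assemble.} Combining Steps 1 and 2 gives
\[
d_A(\widehat s,s_*)^2\le C\frac{LW^2\log(CBLWRM)+\log(1/\delta)}{M}+C\varepsilon+CR^{-2}
\]
with probability at least \(1-\delta\). Constants depending only on the activation, on \(K\) and on \(c_{\mathcal S}\) are absorbed into \(C\). Finally, \(\log(1/\delta)\) is kept separate from the \(\log M\) factor.
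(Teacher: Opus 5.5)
Your proposal follows the route the paper leaves implicit, and it is correct at the paper's level of rigor. Theorem~\ref{thm:nn-entropy} bounds the Dudley integral by $\mathfrak D(\mathcal S_{\mathrm{NN}})^2\lesssim LW^2\log(CBLWR)\le LW^2\log(CBLWRM)$; this enters the quadratic-growth plateau $C(\mathfrak D(\mathcal S)^2+\log(1/\delta))/M+C\varepsilon$ of Theorems~\ref{thm:global-emp} and~\ref{thm:nonparametric-score-closure} (in the general form of Lemma~\ref{lem:nonparametric-residual}); and the smoothing tail $\E_{\nu_*}\|(I-S_R^*S_R)x\|_A^2\le R^{-2}$ supplies the bias $CR^{-2}$. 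The one point you should state explicitly is that your bias bound $d_A(\bar s,s_*)^2\le CR^{-2}$ tacitly assumes $s_*$ is itself a network of the class acting on the unsmoothed input, with its input-Lipschitz constant (of order $B^L$) absorbed into $C$; the paper relies on the same realizability hypothesis without stating it.
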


\begin{definition}[\(A\)-adapted spectral network]
\label{def:A-spectral-network}
We call a score network \(A\)-adapted if its first layer has the form
\[
        x\mapsto \bigl(\langle x,\varphi_1\rangle,\dots,
        \langle x,\varphi_m\rangle\bigr),
\]
where the \(\{\varphi_j\}\) are chosen as the leading eigenvectors of the
generalized problem
\[
        \Asf_*\varphi_j=\lambda_jQ\varphi_j,
\]
and the weights of the subsequent layers are penalized by the spectral norm
\[
        \sum_{\ell}\|W_\ell\operatorname{diag}(\lambda_1^{1/2},\dots,
        \lambda_m^{1/2})\|_{\mathrm{op}}^2 .
\]
\end{definition}

\begin{proposition}[Reduction of the effective dimension for \(A\)-adapted networks]
\label{prop:A-adapted-network}
Suppose the eigenvalues of the generalized \(A\)-problem satisfy
\(\lambda_j\asymp j^{-2r}\), \(r>1/2\). Then for the \(A\)-adapted network class
\(\mathcal S_{\mathrm{NN}}^A(L,W,B,m)\)
\[
        \log N(u,\mathcal S_{\mathrm{NN}}^A,L^2(\nu_*;A))
        \le
        C LW^2
        \log\left(\frac{CBLW}{u}\right)
        +C m_u,
\]
where
\[
        m_u\le C u^{-1/r}
\]
is the effective number of active \(A\)-modes. In particular, under fast spectral
decay the \(A\)-adapted architecture gives a smaller effective dimension than an
arbitrary feed-forward class on the same raw coordinates.
\end{proposition}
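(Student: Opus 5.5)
The bound will be obtained by splitting the approximation of a network realization into two pieces. The first is a truncation of the spectral input to the first \(m_u\) generalized modes. The second is a covering of the subsequent layers, as in Theorem~\ref{thm:nn-entropy}.

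\textbf{Step 1: reduce to \(m_u\) active modes.} Fix \(u\in(0,1)\). Write the first layer as \(\Pi_m x=(\langle x,\varphi_j\rangle)_{j\le m}\), and let \(\Pi_{m_u}\) zero out the coordinates with index \(j>m_u\). Because the penalty weights the first hidden layer by \(\operatorname{diag}(\lambda_j^{1/2})\), a network \(s\) in the class differs from its truncation \(s\circ\Pi_{m_u}\) by at most
\[
  \mathrm{Lip}(s)\,\Bigl(\sum_{j>m_u}\lambda_j\,\E_{\nu_*}|\langle x,\varphi_j\rangle_Q|^2\Bigr)^{1/2}
  \le CB^L\Bigl(\sum_{j>m_u}j^{-2r}\Bigr)^{1/2}
  \le CB^L m_u^{-(2r-1)/2}.
\]
Here I normalize the generalized eigenvectors by \(\langle Q\varphi_j,\varphi_j\rangle=1\). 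With this normalization, \(A\)-compatibility turns the \(L^2(\nu_*;A)\) error into the \(\lambda\)-weighted tail. I then choose \(m_u\) as the smallest integer making this tail at most \(u/2\).

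A crude version of this argument gives \(m_u\lesssim u^{-2/(2r-1)}\). To reach the stated \(m_u\le Cu^{-1/r}\), I would instead count the active modes directly: a mode is active when \(\lambda_j^{1/2}\ge u\), i.e.\ \(j^{-r}\ge u\), which gives \(j\le u^{-1/r}\). The inactive modes then contribute at most \(u\) per unit weight after the spectral penalty, and their tail is absorbed using \(r>1/2\). I will adopt this definition, \(m_u=\#\{j:\lambda_j^{1/2}\ge u\}\le Cu^{-1/r}\).

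\textbf{Step 2: cover the remaining layers.} On the truncated input the network is a feed-forward map on \(\mathbb R^{m_u}\). The only change from Theorem~\ref{thm:nn-entropy} is the first weight matrix, which is now of size \(W\times m_u\) instead of \(W\times m\); its entries are covered at scale \(\eta\) with a log-cardinality proportional to \(m_u\). The deeper layers contribute \(CLW^2\log(CBLW/\eta)\) exactly as before. The Lipschitz dependence of the realization on the weights again converts an \(\eta\)-net of weights into a \(u/2\)-net of functions, with \(\eta\asymp u/(BLW)\). The smoothing parameter \(R\) does not appear here, because the spectral truncation of Step~1 replaces the tail \(R^{-1}\).

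\textbf{Step 3: combine.} Combining Steps 1 and 2 and bounding the factor multiplying \(m_u\) by a constant gives
\[
  \log N\le CLW^2\log(CBLW/u)+Cm_u.
\]
This requires absorbing the factor \(W\log(1/u)\) attached to \(m_u\) into the constant, or into the first term.

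The main obstacle is that absorption. Honestly, the first-layer count is \(Wm_u\log(CBLW/u)\), not \(Cm_u\). To obtain the clean \(Cm_u\), I would parametrize the penalized first layer as a bounded ellipsoid \(\{w:\sum_j\lambda_j^{-1}|w_j|^2\le B^2\}\). I would then use the known entropy of ellipsoids with \(j^{-r}\)-semiaxes, namely \(\log N\asymp u^{-1/r}\) up to logarithmic factors, rather than a box count. This yields \(C m_u\) up to the constant. The final comparison with unstructured classes follows because \(m_u\) replaces the raw input dimension \(m\), which enters Theorem~\ref{thm:nn-entropy} through \(W^2\) and the smoothing tail.
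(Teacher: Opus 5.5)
Your Steps~1--2 follow the paper's route: truncate to the first \(m_u\) generalized modes, then cover the remaining weights as in Theorem~\ref{thm:nn-entropy}. Your observation that the summed tail only gives \(m_u\lesssim u^{-2/(2r-1)}\) is correct. The condition \(Cm^{1-2r}\le u^2\) forces \(m\gtrsim u^{-2/(2r-1)}\), and \(2/(2r-1)>1/r\) for every \(r>1/2\). The paper's proof makes exactly this passage from the tail bound to \(m_u\le Cu^{-1/r}\). Its extra term \(Cm_u\) also ignores the \(W\) first-layer weights attached to each active mode. So you cannot lean on the paper here.

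The gap is in your repair. Suppose you define \(m_u=\#\{j:\lambda_j^{1/2}\ge u\}\asymp u^{-1/r}\) but keep the estimate you actually wrote in Step~1. Then the discarded block costs \(\bigl(\sum_{j>m_u}j^{-2r}\bigr)^{1/2}\asymp m_u^{-(r-1/2)}\asymp u^{1-1/(2r)}\), which is much larger than \(u\) as \(u\to0\). The hypothesis \(r>1/2\) only makes the sum finite; it does not make it \(O(u^2)\). The exponent \(1/r\) requires that the discarded block cost only about its largest semiaxis \(\lambda_{m_u+1}^{1/2}\), not the \(\ell^2\)-sum of all of them. That needs a weighted-\(\ell^2\) (ellipsoid) constraint on each first-layer row, plus control of the covariance of the coordinates \(\langle x,\varphi_j\rangle\) under \(\nu_*\). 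This is a different estimate from the Lipschitz bound of Step~1. It is the mechanism of your Step~3, and it cannot be absorbed into Step~1.

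Step~3 does not close the gap either, for two reasons. First, the ellipsoid \(\{w:\sum_j\lambda_j^{-1}|w_j|^2\le B^2\}\) is not what the class gives you. A bound \(\|W\operatorname{diag}(\lambda_1^{1/2},\dots,\lambda_m^{1/2})\|_{\mathrm{op}}\le B\) on the penalty of Definition~\ref{def:A-spectral-network} yields \(\sum_j\lambda_j|w_j|^2\le B^2\) for each row. That ellipsoid has semiaxes \(B\lambda_j^{-1/2}\), which grow, so you are adding a hypothesis on the class. Second, even granting the ellipsoid, there are \(W\) rows. Each must be covered at scale \(u\) divided by the downstream Lipschitz constant \(B^{L-1}\). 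The ellipsoid count is therefore of order \(W(B^L/u)^{1/r}\), up to further powers of \(W\), rather than \(Cm_u\) with a constant independent of \(W,B,L\). The factor you flagged cannot be absorbed into the constant.

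As written, your argument gives at best \(\log N\le CLW^2\log(CBLW/u)+CWm_u\log(CBLW/u)\) with \(m_u\lesssim(B^L/u)^{2/(2r-1)}\). To reach the stated bound you would have to change the class to an ellipsoidal first-layer constraint and accept \(W\)- and \(B\)-dependence in the \(m_u\)-term. Otherwise you need an additional idea explaining why that dependence disappears.
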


\begin{proof}
The first \(m_u\) modes are covered by the standard weight net, as in
Theorem~\ref{thm:nn-entropy}. The tail is estimated through the spectrum:
\[
        \sum_{j>m}\lambda_j\le C m^{1-2r}.
\]
Choosing \(m=m_u\) so that the tail is no larger than \(u^2\) gives
\(m_u\le C u^{-1/r}\). The spectral regularization of the weights guarantees that
the tail error is measured precisely in \(L^2(\nu_*;A)\), and not in the Euclidean
norm of the raw coordinates. This yields the extra term \(Cm_u\) instead of the
full dimension of the original projection space.
\end{proof}

\begin{theorem}[Approximation of smooth score fields by \(A\)-adapted networks]
\label{thm:A-adapted-approximation}
Suppose the exact score field has the spectral expansion
\[
        s_*(x)=\sum_{j\ge1}\alpha_j \varphi_j(x)
\]
in the leading vectors of the generalized \(A\)-problem, with
\[
        \sum_{j\ge1} j^{2r}\alpha_j^2<\infty,
        \qquad r>1/2.
\]
Suppose also that the finite-dimensional coefficient function on the first \(m\)
modes belongs to a Sobolev class of smoothness \(\beta>0\). Then there is an
\(A\)-adapted spectral network
\(s_{\theta}\in\mathcal S_{\mathrm{NN}}^A(L,W,B,m)\) such that
\[
        \|s_\theta-s_*\|_{L^2(\nu_*;A)}
        \le
        C m^{-(r-1/2)}+C W^{-\beta/m}.
\]
For each fixed \(m\) this statement gives a finite-dimensional approximation with
error \(CW^{-\beta/m}\). To obtain asymptotic decay as \(m\to\infty\), the width
must grow together with \(m\). For example, if \(W_m\ge \exp(\lambda m\log m)\),
then \(W_m^{-\beta/m}\le m^{-\lambda\beta}\), and choosing
\(\lambda>(r-1/2)/\beta\) makes the network remainder no worse than the spectral
tail \(m^{-(r-1/2)}\).
\end{theorem}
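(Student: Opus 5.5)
The approach is to split the error into a spectral truncation term and a finite-dimensional network approximation term via the triangle inequality in \(L^2(\nu_*;A)\). Let \(P_m^A\) denote the projection onto the span of the first \(m\) generalized eigenvectors \(\varphi_1,\dots,\varphi_m\) of \(\Asf_*\varphi_j=\lambda_jQ\varphi_j\), and set \(s_*^{(m)}=\sum_{j\le m}\alpha_j\varphi_j\). Then
\[
        \|s_\theta-s_*\|_{L^2(\nu_*;A)}
        \le
        \|s_*-s_*^{(m)}\|_{L^2(\nu_*;A)}
        +\|s_*^{(m)}-s_\theta\|_{L^2(\nu_*;A)} ,
\]
and I will bound the two terms separately by \(Cm^{-(r-1/2)}\) and \(CW^{-\beta/m}\).

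For the spectral tail, I use that the generalized eigenvectors are orthogonal in the \(A\)-inner product. After normalization, \(\|\sum_{j>m}\alpha_j\varphi_j\|_A^2=\sum_{j>m}\alpha_j^2\) up to the compatibility constants \(c_A,C_A\) of Proposition~\ref{prop:nano}. The tail is then estimated by Cauchy--Schwarz against the weight \(j^{2r}\):
\[
        \sum_{j>m}|\alpha_j|
        \le
        \Bigl(\sum_{j>m}j^{2r}\alpha_j^2\Bigr)^{1/2}
        \Bigl(\sum_{j>m}j^{-2r}\Bigr)^{1/2}
        \le C m^{-(r-1/2)} ,
\]
which holds because \(r>1/2\). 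This \(\ell^1\)-type bound also controls the \(L^2\) norm, and it is exactly the stated rate. Here the coefficients \(\alpha_j\) may depend on \(x\) through the projected coordinates. In that case I integrate against \(\nu_*\) and assume, as the hypothesis implicitly does, that the weighted summability holds in \(L^2(\nu_*)\).

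For the network term, the \(A\)-adapted first layer realizes \(x\mapsto(\langle x,\varphi_j\rangle)_{j\le m}\) exactly. It therefore suffices to approximate the coefficient function \(g:\mathbb R^m\to\mathbb R^m\) of \(s_*^{(m)}\), which lies in a Sobolev class of smoothness \(\beta\), by a feed-forward network of width \(W\). I will invoke a standard approximation theorem for Sobolev functions in dimension \(m\) (Yarotsky-type or local Taylor polynomial constructions). With on the order of \(W\) units it gives error \(CW^{-\beta/m}\) in \(L^2\) of the pushforward of \(\nu_*\) under the first layer. The Gaussian-type tails of this pushforward, inherited from \(\mu_0\), will be handled by truncating to a large cube where the uniform approximation holds and absorbing the complement into the constant. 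I then transfer this back to \(L^2(\nu_*;A)\) through the bounds \(\lambda_j\le\lambda_1\) on the spectral weights and \(C_A\). I must also check that the spectral penalty in Definition~\ref{def:A-spectral-network} keeps the constructed weights within the bound \(B\). The concluding remark about \(W_m\ge\exp(\lambda m\log m)\) is a direct computation: \(W_m^{-\beta/m}\le\exp(-\lambda\beta\log m)=m^{-\lambda\beta}\).

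I expect the main obstacle to be the network step, specifically keeping the constant in \(CW^{-\beta/m}\) uniform. Standard approximation constants depend on \(m\) and on the Sobolev norm of \(g\), and the input distribution is unbounded. I would first fix \(m\) and allow \(C=C(m)\), consistent with the statement's phrasing ``for each fixed \(m\)''. I would then note that the growth condition on \(W_m\) absorbs polynomial or exponential-in-\(m\) dependence of \(C(m)\) only if \(\lambda\) is enlarged accordingly.
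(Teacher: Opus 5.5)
Your proposal is correct at the paper's level of rigor and follows the same route. It uses the triangle inequality, the spectral tail bound $\|s_*-s_*^{(m)}\|_{L^2(\nu_*;A)}\le Cm^{-(r-1/2)}$, and a standard $m$-dimensional Sobolev approximation of the coefficient map, transferred to $L^2(\nu_*;A)$ by the $A$-adapted first layer. The only difference is cosmetic: you get the tail bound by Cauchy--Schwarz on $\sum_{j>m}|\alpha_j|$, while the paper uses $\alpha_j^2\le Cj^{-2r}$ and $\sum_{j>m}j^{-2r}\le Cm^{1-2r}$.

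Your extra caveats go beyond what the paper checks, but two of your fixes are too quick. First, the truncation error is not simply absorbed into the constant: the cube must grow with $W$, which typically puts a logarithmic factor in front of $W^{-\beta/m}$. Second, an exponential-in-$m$ constant $C(m)$ cannot be absorbed by enlarging a fixed $\lambda$ in $W_m\ge\exp(\lambda m\log m)$, since $e^{cm}m^{-\lambda\beta}\to\infty$; only polynomial growth of $C(m)$ is handled that way.
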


\begin{proof}
Project \(s_*\) onto the first \(m\) eigendirections:
\[
        s_*^{(m)}=\sum_{j\le m}\alpha_j\varphi_j.
\]
By the assumption on the coefficients,
\[
        \|s_*-s_*^{(m)}\|_{L^2(\nu_*;A)}^2
        \le C\sum_{j>m}j^{-2r}\le C m^{1-2r}.
\]
It remains to approximate the finite-dimensional coefficient map on
\(\mathbb R^m\). For a Sobolev class of smoothness \(\beta\) the standard
approximation theorem for feed-forward networks gives an error \(CW^{-\beta/m}\)
in \(L^2\) on the finite-dimensional block. Since the first layer of the network
is spectrally \(A\)-adapted, this error is measured in the same \(L^2(\nu_*;A)\)
norm, and not in the mismatched Euclidean norm of the raw coordinates. Adding the
spectral tail and the finite-dimensional error completes the proof.
\end{proof}

\begin{remark}[The parameter trade-off]
The growth condition \(W_m\ge \exp(\lambda m\log m)\) is not an additional
analytic hypothesis about the model. It merely fixes the regime in which the
finite-dimensional network error decays together with the spectral tail. For fixed
\(W\) the theorem still holds, but gives only an approximation of the chosen
\(m\)-dimensional block, without the limit \(m\to\infty\).
\end{remark}

\begin{remark}[Why this is more than a change of norm]
The usual covering estimate for feed-forward networks counts the number of
parameters but does not take into account which directions matter for the
\(A\)-dissipation. In an \(A\)-adapted network the first layer is chosen according
to the spectrum of the consistent geometry, and the weight regularization
suppresses directions with a small contribution to \(J_t\). The architecture is
therefore tied to the operator part of the theory and is not a plain Euclidean
network measured in a different norm.
\end{remark}

\section{Local Lipschitz Continuity and Growth of the Anisotropy}
\label{sec:local-lipschitz}

Global Lipschitz continuity of \(D(x)\) is convenient but not always natural. The
following variant shows that it can be replaced by local Lipschitz continuity
together with a Lyapunov growth condition.

\begin{assumption}[Local regularity and growth]
\label{ass:local-growth-D}
The operator \(D(x)^{1/2}\) is locally Lipschitz, globally \(A\)-compatible, and
there is a function \(V(x)=1+\|x\|^2\) such that the generator of the forward
dynamics satisfies
\[
        \mathcal LV(x)\le a-bV(x)+c\mathbf 1_{\{\|x\|\le R_0\}} .
\]
In addition,
\[
        \|D(x)^{1/2}Q^{1/2}\|_{\LLtwo}^2
        \le C(1+V(x)).
\]
\end{assumption}

\begin{proposition}[Persistence of the analytic theory under local Lipschitz continuity]
\label{prop:local-lipschitz-extension}
Under Assumption~\ref{ass:local-growth-D} the forward SDE does not explode, the
Galerkin solutions satisfy uniform moment estimates, and the entropy theory of
Part~I is preserved on any finite time interval on which the \(A\)-LSI and the
weak-bridge condition hold.
\end{proposition}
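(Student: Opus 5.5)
The proof will be a localization argument: truncate the coefficients, use the Lyapunov function \(V\) to rule out explosion and to get moment bounds uniform in the truncation and the Galerkin index, and then pass to the limit in the entropy inequality of Assumption~\ref{ass:part1-base}.

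\textbf{Step 1: truncation and non-explosion.} For each \(R>0\) we replace \(D(x)^{1/2}\) (and the drift) by coefficients \(D_R(x)^{1/2}\) that agree with the original ones on \(\{\|x\|\le R\}\) and are globally Lipschitz there, for instance by composing with the radial retraction onto the ball of radius \(R\). For \(D_R\) the global theory of Part~I applies and gives strong solutions \(X^R\). Let \(\tau_R=\inf\{t:\|X^R_t\|\ge R\}\). Pathwise uniqueness for the locally Lipschitz coefficients makes \(X^R=X^{R'}\) on \([0,\tau_R]\) for \(R'\ge R\), so there is a solution up to \(\tau_\infty=\lim_R\tau_R\). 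Applying It\^o's formula to \(e^{bt}V(X_{t\wedge\tau_R})\) and using \(\mathcal LV\le a-bV+c\) gives
\[
\E V(X_{t\wedge\tau_R})\le V(x_0)+(a+c)/b .
\]
Since \(V\ge R^2\) on \(\{\tau_R\le t\}\), this yields \(\mathbb P(\tau_R\le t)\le C_t/R^2\to0\). Hence \(\tau_\infty=\infty\) almost surely, and there is no explosion.

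\textbf{Step 2: uniform Galerkin moments.} The Galerkin system on \(P_n\Hh\) has generator \(\mathcal L_n\). For it we will check
\[
\mathcal L_nV\le a'-b'V+c'\mathbf 1_{\{\|x\|\le R_0\}}
\]
with constants independent of \(n\). This is the main technical point. The second-order part of \(\mathcal L_nV\) is \(\operatorname{Tr}(P_nD^{1/2}QD^{1/2}P_n)\le\|D(x)^{1/2}Q^{1/2}\|_{\LLtwo}^2\le C(1+V)\) by Assumption~\ref{ass:local-growth-D}. This bound is uniform in \(n\) because \(P_n\) is a contraction and Hilbert--Schmidt norms do not increase under it. The drift part projects monotonically, since \(P_n\) commutes with the basis of \(Q\).

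A difficulty arises because the bound \(C(1+V)\) competes with the dissipative term \(-bV\). If \(C\) is not dominated by \(b\) for large \(\|x\|\), I would first try to use the stated Lyapunov inequality directly. That inequality already includes the full trace term, and the Galerkin trace is bounded by the full one, so \(\mathcal L_nV\le\mathcal LV+\text{(drift projection error)}\). The drift projection error vanishes for the aligned basis. The BDG inequality then gives \(\sup_n\E\sup_{t\le T}V(X^n_t)^p<\infty\).

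\textbf{Step 3: entropy theory.} The derivation of the entropy inequality in Part~I uses global Lipschitz continuity only to construct the process and to justify moment and integrability bounds. It does not use it in the \(A\)-LSI, in weighted closedness, or in the weak bridge, which are assumed on the interval. So we run the entropy argument for the stopped or truncated dynamics. We obtain the inequality of Assumption~\ref{ass:part1-base} with \(\rho^R_t\), and then let \(R\to\infty\). The passage to the limit works as follows:
\begin{itemize}
\item the entropy is lower semicontinuous under the weak convergence \(\rho^R_t\to\rho_t\), which follows from \(\mathbb P(\tau_R\le t)\to0\);
\item the terms \(J_t\) and \(\Eval\) pass to the limit by Fatou's lemma and dominated convergence;
\item the domination comes from the uniform moments of Step 2 together with the growth bound on \(\|D^{1/2}Q^{1/2}\|_{\LLtwo}\).
\end{itemize}
Integrating the differential inequality in time before taking limits avoids differentiating limits.
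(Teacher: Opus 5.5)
Your Steps~1 and~2 follow the paper's route: a local solution, the Lyapunov function \(V\) to exclude explosion, and moment bounds. Step~2 even supplies the Galerkin uniformity that the paper only asserts. Two small fixes are needed there. With stopping you cannot divide out \(e^{b(t\wedge\tau_R)}\), so It\^o gives only a \(t\)-dependent bound such as \(\E V(X_{t\wedge\tau_R})\le V(x_0)+(a+c)t\); this is all you actually use. The drift term is unchanged simply because \(\langle x,P_nb(x)\rangle=\langle x,b(x)\rangle\) for \(x\in P_n\Hh\) and any orthogonal \(P_n\). Hence \(\mathcal L_nV\le\mathcal LV\) on \(P_n\Hh\), and no competition between \(C(1+V)\) and \(-bV\) arises.

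The genuine gap is in Step~3. You do not apply Part~I to the process whose hypotheses are assumed. Instead you apply it to the truncated dynamics \(X^R\) and let \(R\to\infty\) in the integrated inequality. The stopped process is no alternative: its law charges the sphere \(\{\|x\|=R\}\), which is \(\mu_0\)-null for a Gaussian reference measure, so there is no density \(\rho^R_t\). Your route needs two things you do not have.

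\emph{Transfer of hypotheses.} Weighted closedness, the weak bridge, the chain rule and the drift-error condition in Assumption~\ref{ass:part1-base} are hypotheses on the original \(\rho_t\). Nothing in Assumption~\ref{ass:local-growth-D} transfers them to \(\rho^R_t\).

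\emph{Limit of the validation error.} \(\Eval\) enters with a plus sign, so you need \(\limsup_{R}\int_0^t\beta\,\Eval^R\le\int_0^t\beta\,\Eval\). That means convergence of the exact scores \(\nabla\log\rho^R_t\to\nabla\log\rho_t\) in the weighted \(A\)-norm. What you actually have is \(\mathbb P(\tau_R\le t)\to0\) (hence total-variation convergence of the laws), the \(V\)-moments, and the bound on \(\|D^{1/2}Q^{1/2}\|_{\LLtwo}\). None of these gives pointwise convergence of the scores or a majorant for \(\|\nabla\log\rho^R_t\|_A^2\). Fisher-information-type functionals are only lower semicontinuous under such convergence. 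That is the right direction for the \(-J_t\) term but the wrong one for \(\Eval\), so ``Fatou and dominated convergence'' does not close the limit.

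The repair is already contained in your own remark that Part~I uses global Lipschitz continuity only for construction and moment bounds. Steps~1--2 give a global, non-exploding solution with \(\sup_{t\le T}\E V(X_t)<\infty\) and uniform Galerkin moments. So run the Part~I derivation on the original process, whose hypotheses are assumed on the interval. Use the stopping times \(\tau_R=\inf\{t:\|X_t\|\ge R\}\) of that process only inside the derivation, to turn local martingales into martingales and to justify integrability in the It\^o and chain-rule steps. Then let \(R\to\infty\) there through the \(V\)-moments. This is what the paper's localization amounts to, and it requires no limit of scores or of \(\Eval\).
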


\begin{proof}
Local Lipschitz continuity gives a local strong solution. The Lyapunov estimate
for \(V\) rules out explosion and gives
\[
        \sup_{t\le T}\E V(X_t)<\infty .
\]
The remaining proofs use not global Lipschitz continuity as such, but the moment
estimates, \(A\)-compatibility, and closedness of the form. The arguments of
Parts~I--II therefore carry over after the standard localization by the stopping
times \(\tau_R=\inf\{t:\|X_t\|\ge R\}\) and the passage to the limit
\(R\to\infty\).
\end{proof}

\begin{proposition}[A counterexample when the lower \(A\)-compatibility fails]
\label{prop:nondegeneracy-counterexample}
Let \(\{e_n\}_{n\ge1}\) be an eigenbasis of \(Q\), \(Qe_n=q_ne_n\), \(q_n>0\).
Suppose there is a sequence of states \(x_n\) for which
\[
        D(x_n)^{1/2}e_n=n^{-1}e_n .
\]
Set \(\xi_n=q_n^{-1/2}e_n\). Then
\[
        \|Q^{1/2}\xi_n\|=1,
        \qquad
        \|Q^{1/2}D(x_n)^{1/2}\xi_n\|^2=n^{-2}\to0 .
\]
Consequently, global lower \(A\)-compatibility is impossible:
\[
        \inf_{x,\xi\ne0}
        \frac{\|Q^{1/2}D(x)^{1/2}\xi\|^2}{\|Q^{1/2}\xi\|^2}=0 .
\]
In particular, the dissipation \(J_t\) cannot uniformly control the entropy along
the directions \(\xi_n\), and the exponential estimate of Part~I breaks down in
general.
\end{proposition}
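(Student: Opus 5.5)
The plan is a direct computation followed by the definition of the infimum. I first check the normalization of \(\xi_n\). Since \(Q\) is self-adjoint and positive with \(Qe_n=q_ne_n\), the functional calculus gives \(Q^{1/2}e_n=q_n^{1/2}e_n\). Hence
\[
Q^{1/2}\xi_n=q_n^{-1/2}q_n^{1/2}e_n=e_n,
\qquad\text{so}\qquad
\|Q^{1/2}\xi_n\|=\|e_n\|=1 .
\]

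Next I compute the numerator. By linearity and the hypothesis \(D(x_n)^{1/2}e_n=n^{-1}e_n\), we get \(D(x_n)^{1/2}\xi_n=n^{-1}q_n^{-1/2}e_n\). Applying \(Q^{1/2}\) again yields \(Q^{1/2}D(x_n)^{1/2}\xi_n=n^{-1}e_n\), and therefore
\[
\|Q^{1/2}D(x_n)^{1/2}\xi_n\|^2=n^{-2}.
\]
One point needs care. In infinite dimensions, \(\xi_n\) is an admissible test vector: it lies in \(\Hh\) because \(\{e_n\}\) is orthonormal and each \(q_n>0\). So the only use of \(Q^{-1/2}\) is on a single eigenvector, and no unbounded-operator issue arises.

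For the infimum, the Rayleigh quotient at \((x_n,\xi_n)\) equals \(n^{-2}/1\). The infimum over all pairs \((x,\xi)\) is therefore at most \(n^{-2}\) for every \(n\). Since the quotient is nonnegative, the infimum is \(0\). This rules out any \(c_A>0\) with \(c_A\|Q^{1/2}\xi\|^2\le\|Q^{1/2}D(x)^{1/2}\xi\|^2\) for all \(x,\xi\). Such a constant would contradict the case \(n>c_A^{-1/2}\), for which \(n^{-2}<c_A\).

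The final interpretive claim, that \(J_t\) cannot control the entropy uniformly, is the only nonmechanical part and the likely obstacle. I would argue it by comparison with the constant estimates of Proposition~\ref{prop:nano} and Assumption~\ref{ass:part1-base}.
\begin{itemize}
\item In Proposition~\ref{prop:nano}, the bound \(C_{\mathrm{LSI}}^A\le C_Q/(c_A(k_0-\kappa_\Phi))\) degenerates as \(c_A\to0\).
\item To make this concrete, take test functions \(h\) whose gradient points along \(\xi_n\) near \(x_n\). The \(A\)-dissipation \(\Gamma_A(h,h)\) is then suppressed by the factor \(n^{-2}\) relative to the \(Q\)-energy.
\item The \(Q\)-LSI still forces the entropy to be of the order of that \(Q\)-energy.
\item Hence no inequality \(J_t\ge c\,\Ent\) with \(c\) uniform in \(n\) can hold.
\end{itemize}
I would state this part as the loss of the mechanism in general, which is all the statement claims, rather than prove a sharp lower bound.
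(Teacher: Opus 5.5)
Your computation of \(\|Q^{1/2}\xi_n\|=1\) and \(\|Q^{1/2}D(x_n)^{1/2}\xi_n\|^2=n^{-2}\), and the contradiction with any \(c_A>0\) once \(n^{-2}<c_A\), are exactly the paper's proof. Your first bullet matches the paper's one-line justification of the closing sentence: the passage from the \(Q\)-LSI to the \(A\)-LSI, \(C_{\mathrm{LSI}}^A\le C_Q/(c_A(k_0-\kappa_\Phi))\), needs \(c_A>0\). Reading ``breaks down in general'' as loss of this mechanism is correct.

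The concretization in your remaining bullets does not work, and you should drop it. It ends with a claim the hypotheses do not support, namely that no inequality \(J_t\ge c\,\Ent\) can hold.

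First, the \(Q\)-LSI bounds the entropy \emph{above} by the \(Q\)-energy. It cannot force the entropy to be as large as that energy. You would need explicit near-extremal test functions whose entropy is bounded below by a fixed multiple of their \(Q\)-energy, checked by hand.

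Second, and more seriously, the degeneracy is assumed only at the countably many states \(x_n\), whereas \(J_t=\int\Gamma_A(h_t,h_t)\rho_t\,d\mu_0\) is an integral. If \(\mu_0\) has no atoms, as for a nondegenerate Gaussian reference measure, then \(\{x_n\}\) is \(\rho_t\mu_0\)-null. Changing \(D\) there does not alter \(J_t\) at all. Even for continuous \(D\), the degeneracy only extends to small neighborhoods of the \(x_n\). Localizing \(h\) there introduces cutoff gradients in other directions and changes the entropy. Without quantitative control of the measure of the degeneracy region, an \(A\)-LSI may still hold by other means.

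From the stated hypothesis you can only conclude that the sufficient condition and the route of Part~I are lost. That is all the statement and the paper's proof assert.
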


\begin{proof}
By definition \(\xi_n=q_n^{-1/2}e_n\), so
\[
        \|Q^{1/2}\xi_n\|^2
        =\|q_n^{1/2}q_n^{-1/2}e_n\|^2=1 .
\]
On the other hand,
\[
        Q^{1/2}D(x_n)^{1/2}\xi_n
        =Q^{1/2}\bigl(n^{-1}q_n^{-1/2}e_n\bigr)
        =n^{-1}e_n,
\]
and therefore
\[
        \|Q^{1/2}D(x_n)^{1/2}\xi_n\|^2=n^{-2}\to0 .
\]
If there were a constant \(c_A>0\) such that
\[
        \|Q^{1/2}D(x)^{1/2}\xi\|^2
        \ge c_A\|Q^{1/2}\xi\|^2
\]
for all \(x,\xi\), then substituting \((x_n,\xi_n)\) would give
\(n^{-2}\ge c_A\), which is impossible as \(n\to\infty\). The last statement
follows because the \(A\)-LSI uses the lower control of the dissipation \(J_t\) by
the entropy; without a positive \(c_A\) this control disappears along the
constructed sequence of directions.
\end{proof}

\section{Remainders in the Final Entropy Estimate}
\label{sec:residual-lemmas}

Before the final synthesis theorem we separate three sources of remainder: the
global empirical process, the nonparametric closure, and the local parametric
step. This makes the proof of Theorem~\ref{thm:part2-main} self-contained.

\begin{lemma}[Global remainder]
\label{lem:global-residual}
Under the hypotheses of Theorem~\ref{thm:global-emp} there is an event of
probability at least \(1-\delta\) on which
\[
        \Eval(t)\le \widetilde q^{\,2}J_t+
        C\left(
        \frac{\mathfrak D(\mathcal S)^2+\log(1/\delta)}{M}
        +\varepsilon+r_n^2\right).
\]
\end{lemma}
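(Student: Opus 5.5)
The plan is to chain three earlier results on one high-probability event: Theorem~\ref{thm:global-emp} for the empirical fluctuation, Theorem~\ref{thm:denoise} for the bias of the training labels, and the contraction-plus-identifiability argument of Theorem~\ref{thm:closure} to convert the statistical error into the form \(\widetilde q^{\,2}J_t+\Delta_M\). The event \(\mathcal E_\delta\) is the one supplied by Theorem~\ref{thm:global-emp}. On it,
\[
        \sup_{s\in\mathcal S}|\widehat{\mathcal R}_{M,\varepsilon}(s)-\mathcal R(s)|
        \le C\frac{\mathfrak D(\mathcal S)+\sqrt{\log(1/\delta)}}{\sqrt M}+C\varepsilon .
\]
All remaining steps are deterministic on \(\mathcal E_\delta\), so the final probability \(1-\delta\) requires no further union bound.

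First I would identify the population risk actually minimized. The empirical risk is built from the cylindrical labels \(\widehat s_{\varepsilon,n}\), not from \(s_*\). I would therefore decompose
\[
        \|s-\widehat s_{\varepsilon,n}\|_A^2
        =\|s-P_ns_*\|_A^2+2\langle s-P_ns_*,P_ns_*-\widehat s_{\varepsilon,n}\rangle_A+\|P_ns_*-\widehat s_{\varepsilon,n}\|_A^2 .
\]
The last term is bounded in expectation by \(C\varepsilon+r_n^2\) by Theorem~\ref{thm:denoise}. The cross term is treated with Young's inequality with a small parameter, and its \(\|s-P_ns_*\|^2\) part is absorbed into the main quadratic term. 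Replacing \(P_ns_*\) by \(s_*\) costs another \(r_n^2\), since that is exactly the projection tail. The result is that the effective risk differs from \(\mathcal R(s)\) by at most an additive \(C(\varepsilon+r_n^2)\), uniformly in \(s\).

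Second, and this is the step I expect to be the main obstacle, I would upgrade the root-order uniform bound to the square-order plateau. The naive route, which combines the basic inequality \(\mathcal R(\widehat s)-\mathcal R(s_*)\le 2\sup|\widehat{\mathcal R}-\mathcal R|\) with quadratic growth, only yields \(M^{-1/2}\). I would instead follow the strongly convex clause of Theorem~\ref{thm:global-emp} and the localization in Theorem~\ref{thm:nonparametric-score-closure}. The idea is to apply the Dudley bound to the localized class \(\{s:d_A(s,s_*)\le\rho\}\) of gradient remainders, whose fluctuation scales like \(\rho\,\mathfrak D(\mathcal S)/\sqrt M\). Quadratic growth then gives \(c\rho^2\le C\rho(\mathfrak D(\mathcal S)+\sqrt{\log(1/\delta)})/\sqrt M+C(\varepsilon+r_n^2)\), and solving gives
\[
        \eta_M^2\lesssim(\mathfrak D(\mathcal S)^2+\log(1/\delta))/M .
\]
The delicate points are twofold: making the localized entropy integral scale linearly in \(\rho\), and checking that the truncation at level \(K\log M\) does not spoil this. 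If the peeling argument becomes messy, I would simply take the gradient-remainder bound \(\eta_M\) from the proof of Theorem~\ref{thm:global-emp} as given.

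Finally I would invoke Theorem~\ref{thm:closure} with this \(\eta_M\). Strong convexity gives the contraction factor \(\widetilde q\). Assumption~\ref{ass:J-ident} converts \(\|s_{\theta}-s_*\|^2_{L^2(\rho_t\mu_0;A)}\) into \(J_t\), which yields \(\widetilde q^{\,2}J_t+C\eta_M^2+C\varepsilon\). Adding the label bias \(C(\varepsilon+r_n^2)\) from the first step and inserting \(\eta_M^2\) from the second gives the stated inequality. To close the argument I would verify two side conditions: that \(\alpha\eta_M\le R/4\) for \(M\) large, so that the step stays in \(\Theta_R\), and that the constants \(C\) do not depend on \(t\), \(M\), \(\delta\) or \(n\).
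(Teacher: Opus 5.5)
You have the same ingredients as the paper's proof. You work on the event of Theorem~\ref{thm:global-emp}, take a square-order bound on \(d_A(\widehat s,s_*)^2\) from its strongly convex clause, and add \(\varepsilon+r_n^2\) from Theorem~\ref{thm:denoise}. You are also right that the bare basic inequality plus quadratic growth gives only \(M^{-1/2}\).

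\textbf{The gap is in your last step.} Theorem~\ref{thm:closure} is about one gradient step \(\theta^+=\theta-\alpha\nabla\widehat{\mathcal R}_{M,\varepsilon}(\theta)\) in a ball \(\Theta_R\subset\mathbb R^p\), under Assumptions~\ref{ass:risk} and~\ref{ass:J-ident}. Its \(\eta_M\) bounds the deviation of the empirical gradient from \(\nabla\mathcal R\) in parameter space.

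Under the hypotheses of Theorem~\ref{thm:global-emp}, the class \(\mathcal S\) has no parametrization, no step size and no ball \(\Theta_R\). So neither the contraction \(|\theta^+-\theta_*|\le\widetilde q|\theta-\theta_*|\) nor the invariance condition \(\alpha\eta_M\le R/4\) is available. Even for a parametrized class, your second step bounds the distance of the empirical minimizer \(\widehat s\) to \(s_*\). That is a different object from the gradient remainder and cannot be plugged in as \(\eta_M^2\).

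You are splicing two incompatible routes:
\begin{itemize}
\item On the gradient-step route no localization is needed at all. A root-order bound \(\eta_M\lesssim(\mathfrak D(\mathcal S)+\sqrt{\log(1/\delta)})/\sqrt M\) on the gradient coordinates is squared automatically by Theorem~\ref{thm:closure}.
\item On the empirical-minimizer route, Theorem~\ref{thm:closure} is the wrong tool.
\end{itemize}
The paper stays on the second route. It passes the \(L^2(\nu_*;A)\) bound on \(\widehat s\), together with the label bias, to \(J_t\) through dynamic identifiability (Assumption~\ref{ass:J-ident}), with no parametric step.

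\textbf{Drop the localization sketch rather than trying to repair it.} The condition \(\mathfrak D(\mathcal S)<\infty\) does not make the fluctuation over \(\{s:d_A(s,s_*)\le\rho\}\) scale like \(\rho\,\mathfrak D(\mathcal S)/\sqrt M\). With \(\log N(u,\mathcal S,d_A)\le C_0u^{-\alpha}\), the localized entropy integral is of order \(\rho^{1-\alpha/2}\). The fixed-point argument then yields \(M^{-2/(2+\alpha)}\), not \(M^{-1}\). Linear scaling in \(\rho\) holds only for locally finite-dimensional classes.

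The order \((\mathfrak D(\mathcal S)^2+\log(1/\delta))/M\) therefore has to be quoted from the strongly convex clause of Theorem~\ref{thm:global-emp}. That is your fallback, and it is what the paper's proof relies on.
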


\begin{proof}
Theorem~\ref{thm:global-emp} gives uniform control of the empirical risk.
Quadratic growth of the risk converts it into a bound on the distance between the
empirical score and the exact score. The cylindrical denoising label adds
\(\varepsilon+r_n^2\) by Theorem~\ref{thm:denoise}. Dynamic identifiability
converts the score distance into \(J_t\), which gives the factor
\(\widetilde q^{\,2}\) in front of the dissipation.
\end{proof}

\begin{lemma}[Nonparametric remainder]
\label{lem:nonparametric-residual}
If the hypotheses of Theorem~\ref{thm:nonparametric-score-closure} hold, then on
an event of probability at least \(1-\delta\)
\[
        \Eval(t)\le \widetilde q^{\,2}J_t+
        C\left(
        \frac{1+\log(1/\delta)}{M}+\varepsilon+r_n^2
        \right).
\]
In a more general form the numerator is replaced by the square of the entropy
integral of the class \(\mathcal S\).
\end{lemma}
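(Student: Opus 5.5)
The bound follows the proof of Lemma~\ref{lem:global-residual}, with the entropy integral \(\mathfrak D(\mathcal S)\) replaced by a constant. The first step uses the metric-entropy hypothesis \(\log N(u,\mathcal S,d_A)\le C_0u^{-\alpha}\) with \(0<\alpha<2\). As in the proof of Theorem~\ref{thm:nonparametric-score-closure}, this gives
\[
\mathfrak D(\mathcal S)\le C\int_0^{\operatorname{diam}\mathcal S}u^{-\alpha/2}\,du=:D_\alpha<\infty ,
\]
a constant depending only on \(C_0\), \(\alpha\) and \(\operatorname{diam}\mathcal S\). We then invoke Lemma~\ref{lem:global-residual}; its hypotheses are those of Theorem~\ref{thm:global-emp}, and they hold because \(\mathfrak D(\mathcal S)<\infty\). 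It yields an event of probability at least \(1-\delta\) on which
\[
\Eval(t)\le \widetilde q^{\,2}J_t+C\left(\frac{\mathfrak D(\mathcal S)^2+\log(1/\delta)}{M}+\varepsilon+r_n^2\right).
\]
Absorbing \(D_\alpha^2\) into the constant \(C\) turns the numerator into \(1+\log(1/\delta)\). Keeping \(\mathfrak D(\mathcal S)^2\) explicit gives the more general form mentioned in the statement.

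An alternative route, closer to the statement, goes directly through Theorem~\ref{thm:nonparametric-score-closure}. On the same event, the quadratic growth condition gives
\[
d_A(\widehat s,s_*)^2\le C\left(\frac{1+\log(1/\delta)}{M}+\varepsilon\right).
\]
Theorem~\ref{thm:denoise} then accounts for replacing the exact score by the cylindrical label, which costs at most \(C\varepsilon+r_n^2\). We split \(\widehat s-s_*\) into a part along the dissipation direction and a statistical remainder, using \(\|a+b\|^2\le(1+\eta)\|a\|^2+(1+\eta^{-1})\|b\|^2\). Assumption~\ref{ass:J-ident} (with \(C_J=1\)) then converts the dynamic part into \(J_t\). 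The contraction \(\widetilde q\) of the gradient step, taken from Theorem~\ref{thm:closure}, supplies the coefficient \(\widetilde q^{\,2}\). The parameter \(\eta\) is chosen small enough that \((1+\eta)\widetilde q^{\,2}\) is still below \(1\); strictly, the coefficient in front of \(J_t\) is then \(\widetilde q^{\,2}\) with a slightly enlarged \(\widetilde q<1\).

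The main obstacle is the localization step. Quadratic growth only holds near the set of minimizers. To apply it, we must know that the empirical minimizer lies in that neighborhood, and this requires \(M\) large enough (depending on \(\delta\)) for the uniform \(M^{-1/2}\) bound to place \(\widehat s\) inside it. The first attempt is a peeling argument over shells \(\{2^{k}r\le d_A(s,s_*)<2^{k+1}r\}\). On each shell the local entropy integral is at most \(C(2^{k+1}r)^{1-\alpha/2}\), which has exponent \(1-\alpha/2>0\) because \(\alpha<2\). The resulting fixed point gives a critical radius \(r_M^2\lesssim M^{-2/(2+\alpha)}\). This is smaller than the claimed \(M^{-1}\) rate, so the paper's stated order would have to be read with the constant absorbing that gap, or as the parametric-type plateau asserted in Theorem~\ref{thm:nonparametric-score-closure}. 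Short of that, we rely on that theorem's conclusion as given.
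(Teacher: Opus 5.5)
Your main route is the paper's proof in different packaging. The paper likewise uses \(\alpha<2\) only to make the Dudley integral finite. It then takes \(d_A(\widehat s,s_*)^2=O(M^{-1})\) from Theorem~\ref{thm:nonparametric-score-closure}, and adds \(\varepsilon+r_n^2\) and the \(\widetilde q^{\,2}J_t\) term by the bookkeeping of Lemma~\ref{lem:global-residual}. Absorbing \(\mathfrak D(\mathcal S)^2\) into \(C\) is exactly the ``more general form'' remark. Your alternative route is essentially the paper's route, with more detail on how \(\widetilde q^{\,2}\) enters. Two small points you share with the paper:
\begin{itemize}
\item Assumption~\ref{ass:globalS} also requires the common subexponential norm \(K\), not just \(\mathfrak D(\mathcal S)<\infty\).
\item Assumption~\ref{ass:J-ident} and the gradient-step factor \(\widetilde q\) are stated only for the local parametric class. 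Using them in the nonparametric regime is an implicit extra hypothesis.
\end{itemize}

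Your last paragraph, however, misreads your own computation. For \(\alpha>0\) one has \(M^{-2/(2+\alpha)}/M^{-1}=M^{\alpha/(2+\alpha)}\to\infty\). So the localized rate is \emph{slower} than \(M^{-1}\), not smaller, and no \(M\)-independent constant can absorb a gap in the exponent of \(M\).

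Read correctly, your peeling argument shows that standard localization gives only \(d_A(\widehat s,s_*)^2\lesssim M^{-2/(2+\alpha)}\), plus the \(\delta\)- and \(\varepsilon\)-terms. This is also the usual minimax order for classes whose entropy is genuinely of order \(u^{-\alpha}\). The \(M^{-1}\) numerator \(1+\log(1/\delta)\) is available only by accepting two cited steps as given: the second conclusion of Theorem~\ref{thm:nonparametric-score-closure}, and the parametric-style ``square of the gradient remainder'' step in Theorem~\ref{thm:global-emp}.

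The paper's proof rests on exactly that step, so your derivation is as complete as the paper's. But you should present the discrepancy as a limitation of the cited result, not as something a constant can fix.
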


\begin{proof}
The condition \(\log N(u,\mathcal S,d_A)\le C_0u^{-\alpha}\), \(0<\alpha<2\),
makes the Dudley integral finite. Theorem~\ref{thm:nonparametric-score-closure}
gives a bound on \(d_A(\widehat s,s_*)^2\) of order \(M^{-1}\) after the quadratic
absorption. The remaining terms, as in Lemma~\ref{lem:global-residual}, come from
the noising and the projection tail.
\end{proof}

\begin{lemma}[Local parametric remainder]
\label{lem:local-residual}
Under the hypotheses of Theorem~\ref{thm:closure},
\[
        \Eval(t)\le \widetilde q^{\,2}J_t+C\eta_M^2+C\varepsilon+r_n^2 .
\]
If \(\eta_M^2\le C[p+\log(1/\delta)]/M\), then the local remainder is of order
\((p+\log(1/\delta))/M+\varepsilon+r_n^2\).
\end{lemma}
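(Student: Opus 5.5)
The plan is to read Lemma~\ref{lem:local-residual} off Theorem~\ref{thm:closure}, with two additions: the projection remainder \(r_n^2\) from Theorem~\ref{thm:denoise}, and a concentration bound for \(\eta_M\) in the second claim.

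\textbf{Step 1: the bound from Theorem~\ref{thm:closure}.} Under its hypotheses (Assumptions~\ref{ass:risk} and~\ref{ass:J-ident}, \(0<\alpha\le 1/C_G\), \(\alpha\eta_M\le R/4\)), the step \(\theta^+\) stays in \(\Theta_R\). Moreover
\[
        \Eval(t)\le \widetilde q^{\,2}J_t+C\eta_M^2+C\varepsilon .
\]
That theorem, however, treats the empirical risk as if it were built from exact projected labels \(P_ns_*\). The actual labels are the cylindrical labels \(\widehat s_{\varepsilon,n}\).

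\textbf{Step 2: account for the labels.} I would decompose the empirical gradient as the population gradient plus a sampling fluctuation plus a label bias. The label bias is controlled in \(L^2(\nu_*;A)\) by Theorem~\ref{thm:denoise}:
\[
        \E\|\widehat s_{\varepsilon,n}-P_ns_*\|_A^2\le C\varepsilon+r_n^2 .
\]
The remaining discrepancy \(P_ns_*-s_*\) is exactly the projection tail, measured in the \(A\)-norm. Because the gradient of the quadratic risk is linear in the label, these terms enter the one-step parameter error additively. One application of \((a+b)^2\le(1+\kappa)a^2+(1+\kappa^{-1})b^2\) then gives
\[
        \Eval(t)\le \widetilde q^{\,2}J_t+C\eta_M^2+C\varepsilon+r_n^2 .
\]
Here I absorb the factor \(1+\kappa\) by slightly enlarging \(\widetilde q\); it stays below \(1\) for small \(\kappa\). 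The main care point is this bookkeeping: the \(J_t\)-coefficient must remain \(\widetilde q^{\,2}\) (or an arbitrarily close constant below \(1\)), while the bias terms land only in the additive plateau.

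\textbf{Step 3: the second claim.} This is pure substitution. If \(\eta_M^2\le C[p+\log(1/\delta)]/M\), then
\[
        C\eta_M^2\le C'\,\frac{p+\log(1/\delta)}{M},
\]
so the local remainder is of order \((p+\log(1/\delta))/M+\varepsilon+r_n^2\).

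\textbf{Where the assumed bound on \(\eta_M\) comes from.} When the gradient remainder is a centered average of \(p\)-dimensional vectors with common subexponential norm \(K\) (Assumption~\ref{ass:globalS}), a standard vector Bernstein bound with a net over the unit sphere of \(\mathbb R^p\) yields \(\eta_M^2\lesssim (p+\log(1/\delta))/M\) with probability \(1-\delta\). This is consistent with Theorem~\ref{thm:global-emp} applied to a \(p\)-dimensional class, whose entropy integral satisfies \(\mathfrak D^2\asymp p\). The only condition to check is that \(\alpha\eta_M\le R/4\), which holds for \(M\) large.
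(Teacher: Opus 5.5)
Your proposal is correct at the paper's level of rigor and follows essentially the same route as its proof: ball invariance and the contraction factor \(\widetilde q\) come from Theorem~\ref{thm:closure}, the empirical gradient error contributes \(C\eta_M^2\), Theorem~\ref{thm:denoise} contributes \(C\varepsilon+r_n^2\), and the second claim is pure substitution. Your explicit Young-inequality bookkeeping is more careful than the paper's one-line sketch, though you could avoid enlarging \(\widetilde q\). To do so, treat the label bias \(b\) (with \(b^2\lesssim\varepsilon+r_n^2\)) as part of the gradient remainder when invoking Theorem~\ref{thm:closure}; this leaves \(\widetilde q^{\,2}\) untouched at the price of requiring \(\alpha(\eta_M+b)\le R/4\). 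The closing paragraph on where the \(\eta_M\) bound comes from is not needed, since that bound is a hypothesis of the lemma.
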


\begin{proof}
Invariance of the ball \(\Theta_R\) follows from the condition
\(\alpha\eta_M\le R/4\). The contraction of the limiting gradient step gives the
factor \(\widetilde q\). The empirical gradient error adds \(C\eta_M^2\), and
Theorem~\ref{thm:denoise} adds \(C\varepsilon+r_n^2\).
\end{proof}

\begin{lemma}[Discrete remainder of the numerical test]
\label{lem:numerical-residual}
If, on the discrete grid,
\[
        |c_{A,N}-c_A|+|C_{A,N}-C_A|\le \rho_N,
        \qquad
        |C_{\mathrm{LSI},N}^A-C_{\mathrm{LSI}}^A|\le \rho_N,
\]
hold, then replacing the continuous constants in Theorem~\ref{thm:part2-main} by
the discrete ones changes the rate \(c_*\) by at most \(C\rho_N\), as long as
\(c_A>0\) and \(C_{\mathrm{LSI}}^A<\infty\).
\end{lemma}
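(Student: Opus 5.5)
The bound is obtained by treating the rate as an explicit function of the constants and applying a mean-value estimate on a compact set bounded away from the singularities. From Section~\ref{sec:explicit-constants}, the rate in Theorem~\ref{thm:part2-main} is
\[
        c_*=\frac{1-\widetilde q^{\,2}}{8C_{\mathrm{LSI}}^A},
\]
possibly combined with $c_A$ and $C_A$ through the drift-error and weak-bridge constants. We write this as $c_*=F(c_A,C_A,C_{\mathrm{LSI}}^A)$. The function $F$ is built from $1/C_{\mathrm{LSI}}^A$, from the coefficient $\widetilde q$ (which does not depend on the geometric constants), and, where $c_A$ enters the passage from the $Q$-LSI to the $A$-LSI as in Proposition~\ref{prop:nano}, from rational expressions in $c_A$ and $C_A$ with denominators involving only $c_A$ and $C_{\mathrm{LSI}}^A$. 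The first step is to fix this dependence explicitly. The second step is to check that $F$ is $C^1$ on the open set $\{c_A>0,\ C_{\mathrm{LSI}}^A>0,\ C_A<\infty\}$.

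Next, localize. Assume $\rho_N\le\rho_0:=\tfrac12\min\{c_A,C_{\mathrm{LSI}}^A\}$. This is possible because $c_A>0$, and $C_{\mathrm{LSI}}^A\ge$ (Poincaré constant) $>0$. Then the discrete triple $(c_{A,N},C_{A,N},C_{\mathrm{LSI},N}^A)$ lies in the compact box
\[
        K=[c_A/2,\,2c_A]\times[C_A-\rho_0,\,C_A+\rho_0]\times[C_{\mathrm{LSI}}^A/2,\,2C_{\mathrm{LSI}}^A],
\]
which stays away from the singular set. On $K$ the gradient of $F$ is bounded by a constant $C$ that depends only on $c_A$, $C_A$, $C_{\mathrm{LSI}}^A$ and $\widetilde q$. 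For example, $|\partial_{C_{\mathrm{LSI}}}F|\le (1-\widetilde q^{\,2})/(2(C_{\mathrm{LSI}}^A)^2)$ on $K$. The mean value theorem along the segment joining the two triples (the box is convex) then gives
\[
        |c_{*,N}-c_*|\le C\bigl(|c_{A,N}-c_A|+|C_{A,N}-C_A|+|C^A_{\mathrm{LSI},N}-C^A_{\mathrm{LSI}}|\bigr)\le 2C\rho_N .
\]
In the complementary regime $\rho_N>\rho_0$, the bound holds trivially after enlarging $C$: both rates are bounded by $c_*\le(8C_{\mathrm{LSI}}^A)^{-1}$, and $c_{*,N}$ is controlled as long as the discrete constants remain admissible. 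Alternatively, one can simply state the lemma for $\rho_N\le\rho_0$.

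The main obstacle is the first step: pinning down exactly how $c_*$ depends on $c_A$ and $C_A$, beyond the explicit factor $1/C_{\mathrm{LSI}}^A$. Those constants enter indirectly through the weak-bridge bound $C_t$ and through $C_P^A\le 2C_{\mathrm{LSI}}^A$. The plan is to verify that every occurrence is a rational function whose denominator vanishes only at $c_A=0$ or $C_{\mathrm{LSI}}^A=0$. That is precisely where the hypotheses $c_A>0$ and $C_{\mathrm{LSI}}^A<\infty$ are used. Without them the gradient bound on $K$ fails, consistent with Proposition~\ref{prop:nondegeneracy-counterexample}.
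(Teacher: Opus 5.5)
Your argument is correct and is essentially the paper's proof. The paper also notes that $c_*=(1-\widetilde q^{\,2})(8C_{\mathrm{LSI}}^A)^{-1}$ depends on the constants through $C\mapsto(8C)^{-1}$, which is Lipschitz on any compact interval in $(0,\infty)$, and it dismisses $c_A,C_A$ with one line: they enter only through the weak-bridge constants and the estimate of $C_{\mathrm{LSI}}^A$. Your localization $\rho_N\le\tfrac12\min\{c_A,C_{\mathrm{LSI}}^A\}$ makes explicit the interval the paper leaves implicit, and the ``main obstacle'' you flag is lighter than you fear: as defined, $c_*$ involves the geometric constants only through $C_{\mathrm{LSI}}^A$, which your hypothesis already controls directly.
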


\begin{proof}
The rate has the form
\[
        c_*=(1-\widetilde q^{\,2})(8C_{\mathrm{LSI}}^A)^{-1}.
\]
The function \(C\mapsto (8C)^{-1}\) is Lipschitz on any interval
\([C_0, C_1]\subset(0,\infty)\). A perturbation of \(C_{\mathrm{LSI}}^A\) by
\(\rho_N\) therefore changes \(c_*\) by \(O(\rho_N)\). Likewise \(c_{A,N},C_{A,N}\)
enter the weak-bridge constants and the estimate of \(C_{\mathrm{LSI}}^A\), which
gives the same order.
\end{proof}

\begin{proposition}[The entropy-closure principle]
\label{prop:entropy-closure-principle}
Suppose the analytic layer of Part~I gives
\[
        \dot H_t\le -\frac14\beta(t)J_t+\frac14\beta(t)\Eval(t),
        \qquad H_t=\Ent(\rho_t\|\hat\rho_t),
\]
and the statistical layer gives, for some \(q\in[0,1)\),
\[
        \Eval(t)\le q^2J_t+\Delta_M .
\]
If the approximate measure satisfies the \(A\)-LSI
\[
        H_t\le 2C_{\mathrm{LSI}}^AJ_t,
\]
then
\[
        H_t\le H_0e^{-c_q\int_0^t\beta(s)\,ds}
        +C_q\Delta_M,
        \qquad
        c_q=\frac{1-q^2}{8C_{\mathrm{LSI}}^A}.
\]
Moreover, all dependence on the chosen statistical procedure enters only through
the pair \((q,\Delta_M)\).
\end{proposition}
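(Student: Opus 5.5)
Substitute the statistical bound into the analytic inequality, which gives
\[
        \dot H_t\le -\tfrac14\beta(t)J_t+\tfrac14\beta(t)\bigl(q^2J_t+\Delta_M\bigr)
        =-\tfrac14(1-q^2)\beta(t)J_t+\tfrac14\beta(t)\Delta_M .
\]
Since \(q<1\), the coefficient in front of \(J_t\) is still negative. The \(A\)-LSI in the form \(J_t\ge (2C_{\mathrm{LSI}}^A)^{-1}H_t\) then converts the dissipation into entropy:
\[
        \dot H_t\le -\frac{1-q^2}{8C_{\mathrm{LSI}}^A}\beta(t)H_t+\tfrac14\beta(t)\Delta_M
        =-c_q\beta(t)H_t+\tfrac14\beta(t)\Delta_M .
\]
This is exactly the constant \(c_q\) in the statement. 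The factor \(1/8\) comes from the \(1/4\) of Part~I times the \(1/2\) from the LSI.

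Next, apply Gr\"onwall with the integrating factor \(e^{c_qB(t)}\), where \(B(t)=\int_0^t\beta(s)\,ds\). Integrating on \([0,t]\) gives
\[
        H_t\le H_0e^{-c_qB(t)}+\tfrac14\Delta_M\int_0^t\beta(r)e^{-c_q(B(t)-B(r))}\,dr .
\]
The integrand is \(c_q^{-1}\tfrac{d}{dr}e^{-c_q(B(t)-B(r))}\), so the integral equals \(c_q^{-1}\bigl(1-e^{-c_qB(t)}\bigr)\le c_q^{-1}\). Hence the claim holds with
\[
        C_q=\frac{1}{4c_q}=\frac{2C_{\mathrm{LSI}}^A}{1-q^2}.
\]
Since \(\beta\ge0\) is only assumed measurable and locally integrable, this step should be done in integrated form. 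Alternatively, \(H_t\) can be assumed absolutely continuous, which the chain rule for relative entropy in Assumption~\ref{ass:part1-base} provides.

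The last sentence of the statement follows by inspection. The procedure enters the differential inequality only through the hypothesis \(\Eval\le q^2J_t+\Delta_M\). The other constants \(C_{\mathrm{LSI}}^A\), \(\beta\) and \(H_0\) belong to the analytic layer, and they appear in \(c_q\) and \(C_q\) only together with \(q\).

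The main obstacle is regularity. The differential inequality must hold for a.e.\ \(t\), and \(H_t\) must be absolutely continuous, for Gr\"onwall to apply. I would handle this by working with the integrated version of Assumption~\ref{ass:part1-base}, or by mollifying in time. Every bound above is linear and monotone, so it survives this passage. The statistical bound must also hold simultaneously for all \(t\), which means on a single event in the probabilistic versions of Lemmas~\ref{lem:global-residual}--\ref{lem:local-residual}. I will state this explicitly as a hypothesis, since the proposition treats \((q,\Delta_M)\) as deterministic.
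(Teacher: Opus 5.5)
Your proof is correct and follows the paper's own argument: substitute the statistical bound, absorb the $q^2J_t$ term, pass from $J_t$ to $H_t$ via the $A$-LSI, and integrate. Your explicit constant $C_q=(4c_q)^{-1}=2C_{\mathrm{LSI}}^A/(1-q^2)$ matches the $(4c_*)^{-1}\Delta_M$ bound in Step~5 of the synthesis theorem. Your remarks on absolute continuity of $H_t$, and on the statistical bound holding on a single event for all $t$, make explicit points the paper leaves implicit.
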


\begin{proof}
Substituting \(\Eval(t)\le q^2J_t+\Delta_M\) into the analytic inequality gives
\[
        \dot H_t\le -\frac14\beta(t)(1-q^2)J_t+\frac14\beta(t)\Delta_M.
\]
By the \(A\)-LSI \(J_t\ge (2C_{\mathrm{LSI}}^A)^{-1}H_t\), whence
\[
        \dot H_t\le -\frac{1-q^2}{8C_{\mathrm{LSI}}^A}\beta(t)H_t
        +\frac14\beta(t)\Delta_M.
\]
Integration gives the stated bound. This proves that the synthesis theorem does
not depend on the particular form of the empirical remainder: it follows from the
universal mechanism of absorbing part of the dissipation.
\end{proof}

\begin{remark}[What the final theorem really says]
Proposition~\ref{prop:entropy-closure-principle} separates the new mechanism from
the technical bookkeeping of remainders. Theorem~\ref{thm:part2-main} below does
not merely list the estimates of the previous sections: it shows that all
admissible statistical procedures enter the backward entropy dynamics through one
and the same channel \(\Eval\le q^2J+\Delta_M\). Comparing the global,
nonparametric, local-parametric, and neural-network regimes therefore becomes a
direct comparison of their pairs \((q,\Delta_M)\), rather than a collection of
unrelated proofs.
\end{remark}

\section{The Synthesis Theorem of Part~II}

\begin{theorem}[Statistical--applied theorem]
\label{thm:part2-main}
Suppose the analytic conditions of Part~I hold, and the statistical class
satisfies either the global bound of Theorem~\ref{thm:global-emp}, or the
nonparametric closure of Theorem~\ref{thm:nonparametric-score-closure}, or the
local closure of Theorem~\ref{thm:closure}. Then with probability at least
\(1-\delta\)
\[
        \Ent(\rho_t\|\hat\rho_t)
        \le
        \Ent(\rho_0\|\hat\rho_0)
        e^{-c_*\int_0^t\beta(s)\,ds}
        +
        C\left(
        \frac{\mathfrak D(\mathcal S)^2+\log(1/\delta)}{M}
        +\varepsilon+r_n^2
        \right),
\]
where \(c_*=(1-\widetilde q^{\,2})(8C_{\mathrm{LSI}}^A)^{-1}\). For the
trace-smoothed class the lower bound of Theorem~\ref{thm:minimax} shows that the
statistical order cannot be improved.
\end{theorem}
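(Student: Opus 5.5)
The proof reduces everything to Proposition~\ref{prop:entropy-closure-principle}. That proposition needs two inputs: the analytic inequality \(\dot H_t\le-\frac14\beta(t)J_t+\frac14\beta(t)\Eval(t)\) together with the \(A\)-LSI, and a statistical bound \(\Eval(t)\le \widetilde q^{\,2}J_t+\Delta_M\). It then returns the exponential decay with rate \(c_q=(1-q^2)/(8C_{\mathrm{LSI}}^A)\) plus a term \(C_q\Delta_M\). The analytic input is exactly Assumption~\ref{ass:part1-base}, which is in force because the analytic conditions of Part~I are assumed. Taking \(q=\widetilde q\) gives \(c_*\) as stated. The plan is therefore to verify the statistical input in each of the three admissible regimes, on an event of probability at least \(1-\delta\). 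On that same event we must also check that \(\Delta_M\) is dominated by the single displayed remainder.

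\textbf{Case analysis.} In the global regime, Lemma~\ref{lem:global-residual} gives \(\Delta_M=C(\mathfrak D(\mathcal S)^2+\log(1/\delta))/M+C\varepsilon+Cr_n^2\) directly. In the nonparametric regime, Lemma~\ref{lem:nonparametric-residual} gives the same form with numerator \(1+\log(1/\delta)\). Since the entropy condition with \(\alpha<2\) makes \(\mathfrak D(\mathcal S)\) a finite constant, after enlarging \(C\) we have \(1\le C\mathfrak D(\mathcal S)^2\); the degenerate case \(\mathfrak D=0\) is a singleton class and can be absorbed by the convention that \(C\) multiplies \(1+\mathfrak D^2\). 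In the local parametric regime, Lemma~\ref{lem:local-residual} gives \(\Delta_M=C\eta_M^2+C\varepsilon+r_n^2\), and with \(\eta_M^2\le C(p+\log(1/\delta))/M\) this has the displayed form. The one identification needed is \(\mathfrak D(\mathcal S)^2\asymp p\) for a \(p\)-dimensional ball \(\Theta_R\) with a Lipschitz parametrization. This follows from \(\log N(u)\le p\log(CR/u)\), which gives \(\mathfrak D\lesssim\sqrt p\).

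\textbf{Integration.} With \(\Delta_M\) fixed, I apply Proposition~\ref{prop:entropy-closure-principle}. Its integration step is a Grönwall argument for \(\dot H\le -c_*\beta H+\frac14\beta\Delta_M\), which yields
\[
H_t\le H_0e^{-c_*B_t}+\tfrac14\Delta_M\int_0^t\beta(s)e^{-c_*(B_t-B_s)}ds,\qquad B_t=\int_0^t\beta.
\]
The integral is bounded by \(1/c_*\), uniformly in \(t\), since it equals \((1-e^{-c_*B_t})/c_*\). So \(C_q=1/(4c_*)\), which depends only on \(\widetilde q\) and \(C_{\mathrm{LSI}}^A\), and this is folded into \(C\).

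\textbf{Main obstacle.} The hard part is uniformity in time of the probabilistic event. The statistical bounds hold on an event of probability at least \(1-\delta\), and that event must not depend on \(t\). I would handle this by noting that the event is determined by the training sample alone: it is the uniform deviation bound of Theorem~\ref{thm:global-emp}, or the gradient-remainder bound \(\eta_M\). Assumption~\ref{ass:J-ident} and quadratic growth then convert this fixed event into the bound on \(\Eval(t)\) for every \(t\) simultaneously, so no union bound over \(t\) is needed.

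The final sentence of the theorem, that the order cannot be improved, is quoted directly from Theorem~\ref{thm:minimax} and Corollary~\ref{cor:minimax-score-risk}. In the trace-smoothed class the plateau term reduces to \(p/M\), which matches the minimax lower bound \(cp/M\).
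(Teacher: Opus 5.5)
Your overall route is the paper's: Proposition~\ref{prop:entropy-closure-principle}, the three remainder lemmas, Gr\"onwall with the bound \((4c_*)^{-1}\Delta_M\), and Theorem~\ref{thm:minimax} for the last sentence. You also note that the good event depends only on the training sample, so no union bound over \(t\) is needed; this is a useful point the paper leaves implicit.

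The step that fails is your treatment of the local parametric regime. Lemma~\ref{lem:local-residual} produces the remainder \(C(p+\log(1/\delta))/M\). To dominate it by the displayed \(C(\mathfrak D(\mathcal S)^2+\log(1/\delta))/M\), you need \(p\le C\,\mathfrak D(\mathcal S)^2\); the \(\log(1/\delta)\) term cannot help, since \(p\) may be much larger than \(\log(1/\delta)\). The covering bound \(\log N(u)\le p\log(CR/u)\) that you cite gives only \(\mathfrak D(\mathcal S)\lesssim\sqrt p\). That is the wrong direction, so the claimed \(\mathfrak D(\mathcal S)^2\asymp p\) is unsupported exactly where you use it.

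The reverse inequality needs a volumetric lower bound \(N(u,\mathcal S,d_A)\ge(cL_-R/u)^p\) for small \(u\). That in turn requires a uniform lower Lipschitz bound \(d_A(s_\theta,s_{\theta'})\ge L_-|\theta-\theta'|\) on \(\Theta_R\). It then gives \(\mathfrak D(\mathcal S)\ge c\sqrt p\,L_-R\), i.e.\ \(p\le C(L_-R)^{-2}\mathfrak D(\mathcal S)^2\).

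Such a bi-Lipschitz condition appears in Corollary~\ref{cor:minimax-score-risk}, but it is not a hypothesis of Theorem~\ref{thm:closure}. Assumption~\ref{ass:risk} is a growth condition relative to the single point \(\theta_*\), not a lower bound on \(d_A(s_\theta,s_{\theta'})\) for arbitrary pairs.

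The paper sidesteps the comparison entirely. In Step~6 of its proof the displayed remainder is read regime by regime, with \(\mathfrak D(\mathcal S)^2\) simply replaced by \(p\) in the local regime. Either adopt that reading or add the lower Lipschitz hypothesis; with that repair the rest of your argument goes through as written.
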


\begin{proof}
The proof uses the entropy-closure principle of Proposition~\ref{prop:entropy-closure-principle}. Below we check which pairs \(q,\Delta_M\) are produced by the four remainder lemmas. The proof is split formally into six steps.

\smallskip
\noindent\textbf{Step 1. Analytic reduction.}
By Theorem 9.2 of Part~I, for \(H_t=\Ent(\rho_t\|\hat\rho_t)\) we have
\[
        \frac{d}{dt}H_t
        \le
        -\frac14\beta(t)J_t+\frac14\beta(t)\Eval(t).
\]
This estimate already includes the weak bridge, the chain rule, and the
\(A,A^{-1}\) duality. No statistical structure of the approximation is used in this
step.

\smallskip
\noindent\textbf{Step 2. Choice of the closure regime.}
If the global empirical regime is used, we apply
Lemma~\ref{lem:global-residual}. If the nonparametric regime is used, we apply
Lemma~\ref{lem:nonparametric-residual}. If the local parametric regime is used, we
apply Lemma~\ref{lem:local-residual}. In all three cases we obtain the single form
\[
        \Eval(t)\le \widetilde q^{\,2}J_t+\Delta_M,
        \qquad 0\le \widetilde q<1,
\]
where \(\Delta_M\) is bounded uniformly in \(t\) by the sum of the statistical
remainder, the noising error \(\varepsilon\), and the projection tail \(r_n^2\).

\smallskip
\noindent\textbf{Step 3. Absorbing part of the dissipation.}
Substituting the previous inequality into the analytic reduction gives
\[
\begin{aligned}
        \dot H_t
        &\le
        -\frac14\beta(t)J_t
        +\frac14\beta(t)\bigl(\widetilde q^{\,2}J_t+\Delta_M\bigr)\\
        &=-\frac14\beta(t)(1-\widetilde q^{\,2})J_t
        +\frac14\beta(t)\Delta_M .
\end{aligned}
\]
Since \(\widetilde q<1\), the first term remains strictly dissipative.

\smallskip
\noindent\textbf{Step 4. Passing from \(J_t\) to the entropy.}
By the \(A\)-LSI for the approximate measure \(\hat\nu_t\),
\[
        H_t\le 2C_{\mathrm{LSI}}^A J_t.
\]
Consequently,
\[
        J_t\ge (2C_{\mathrm{LSI}}^A)^{-1}H_t
\]
and
\[
        \dot H_t
        \le
        -c_*\beta(t)H_t+\frac14\beta(t)\Delta_M,
        \qquad
        c_*=\frac{1-\widetilde q^{\,2}}{8C_{\mathrm{LSI}}^A}.
\]

\smallskip
\noindent\textbf{Step 5. Integration.}
Multiplying the last inequality by
\[
        \exp\left(c_*\int_0^t\beta(r)\,dr\right)
\]
and integrating, we obtain
\[
        H_t
        \le
        H_0\exp\left(-c_*\int_0^t\beta(r)\,dr\right)
        +\frac14\int_0^t
        e^{-c_*\int_s^t\beta(r)\,dr}\beta(s)\Delta_M\,ds .
\]
If \(\Delta_M\) does not depend on \(s\), the integral does not exceed
\((4c_*)^{-1}\Delta_M\). Hence
\[
        H_t
        \le
        H_0e^{-c_*\int_0^t\beta(r)\,dr}+C\Delta_M .
\]

\smallskip
\noindent\textbf{Step 6. Substituting the remainders.}
In the global regime
\[
        \Delta_M
        =C\left(
        \frac{\mathfrak D(\mathcal S)^2+\log(1/\delta)}{M}
        +\varepsilon+r_n^2\right).
\]
In the local parametric regime \(\mathfrak D(\mathcal S)^2\) is replaced by \(p\),
and in the neural-network regime by \(LW^2\log(CBLWRM)\) by
Corollary~\ref{cor:nn-closure}. This gives the stated bound. The minimax lower
bound of Theorem~\ref{thm:minimax} shows that the term of order \(p/M\) in the
parametric regime cannot be improved in general.
\end{proof}

\begin{corollary}[Explicit physical rate for the nanosystem model]
\label{cor:physical-rate}
Under the hypotheses of Proposition~\ref{prop:nano} and
Theorem~\ref{thm:part2-main},
\[
        c_*\ge
        \frac{d_-(k_0-\kappa_\Phi)(1-\widetilde q^{\,2})}{8C_Q}.
\]
Consequently, as \(M\to\infty\), \(\varepsilon\to0\), \(n\to\infty\), the rate of
the statistically approximated backward evolution stabilizes to the analytic rate
given by the \(A\)-LSI of the limiting model.
\end{corollary}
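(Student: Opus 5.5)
The bound on \(c_*\) follows by inserting the nanosystem \(A\)-LSI constant into the formula for the rate. The convergence statement then follows by letting the statistical remainder in Theorem~\ref{thm:part2-main} vanish.

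\textbf{Step 1: lower bound on \(c_*\).} Theorem~\ref{thm:part2-main} gives
\[
c_*=\frac{1-\widetilde q^{\,2}}{8C_{\mathrm{LSI}}^A}.
\]
Proposition~\ref{prop:nano} gives the Bakry--Émery bound
\[
C_{\mathrm{LSI}}^A\le \frac{C_Q}{d_-(k_0-\kappa_\Phi)}.
\]
This bound is positive and finite because \(k_0>\kappa_\Phi\) and \(d_-=c_A>0\). Since \(C\mapsto (8C)^{-1}\) is decreasing on \((0,\infty)\) and \(1-\widetilde q^{\,2}>0\), substitution yields
\[
c_*\ge \frac{d_-(k_0-\kappa_\Phi)(1-\widetilde q^{\,2})}{8C_Q}.
\]
This agrees with the formula already recorded in Section~\ref{sec:explicit-constants}.

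\textbf{Step 2: stabilization of the rate.} The point is that the right-hand side of Theorem~\ref{thm:part2-main} splits into two parts.
\begin{itemize}
\item The contraction term \(H_0e^{-c_*\int_0^t\beta}\) involves \(c_*\), which depends only on the physical constants and on \(\widetilde q\). It does not depend on \(M,\varepsilon,n\).
\item The plateau term is
\[
C\Bigl(\frac{\mathfrak D(\mathcal S)^2+\log(1/\delta)}{M}+\varepsilon+r_n^2\Bigr).
\]
\end{itemize}
For fixed \(\delta\) and a fixed class, the plateau tends to \(0\) as \(M\to\infty\) and \(\varepsilon\to0\). For \(n\to\infty\) we have \(r_n\to0\) by Theorem~\ref{thm:denoise}. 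Under the spectral decay \(q_k\le C_Q^{\mathrm{sp}}k^{-1-\gamma_Q}\) of Section~\ref{sec:explicit-constants} this holds with the explicit rate \(r_n^2\le (C_Q^{\mathrm{sp}}/\gamma_Q)n^{-\gamma_Q}\).

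Passing to the limit therefore leaves \(H_t\le H_0e^{-c_*\int_0^t\beta}\). This is exactly the decay produced by the \(A\)-LSI of the limiting model through Proposition~\ref{prop:entropy-closure-principle} with \(\Delta_M=0\).

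\textbf{Main obstacle: uniformity of \(\widetilde q\) and the constants.} The delicate point is that \(\widetilde q\), and the constant \(C\) in front of \(\Delta_M\), must stay uniform as \(M,\varepsilon,n\) vary.
\begin{itemize}
\item \(\widetilde q=\sqrt{1-2\alpha c_G+\alpha^2C_G^2}\) is fixed by the risk geometry of Assumption~\ref{ass:risk} and by the step size. It does not depend on the sample.
\item The invariance condition \(\alpha\eta_M\le R/4\) only becomes easier as \(M\) grows.
\item The prefactor \((4c_*)^{-1}\) from Step~5 of the proof of Theorem~\ref{thm:part2-main} is bounded by the reciprocal of the explicit lower bound from Step~1.
\end{itemize}
I would state these three facts explicitly, so that "stabilizes" means: the rate is the fixed quantity \(c_*\) and the plateau is \(o(1)\).

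If one additionally wants the rate to approach the \(A\)-LSI rate of the \emph{limiting} (homogenized) model, I would invoke Lemma~\ref{lem:numerical-residual}. Together with Proposition~\ref{prop:discrete-constants-rate}, it controls the change in \(c_*\) under replacement of the constants by \(C\rho_N\to0\).
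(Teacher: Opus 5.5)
Your Step~1 is exactly the paper's proof: substitute the bound \(C_{\mathrm{LSI}}^A\le C_Q/[d_-(k_0-\kappa_\Phi)]\) from Proposition~\ref{prop:nano} into \(c_*=(1-\widetilde q^{\,2})(8C_{\mathrm{LSI}}^A)^{-1}\). The paper gives no argument for the stabilization clause, and your Step~2 supplies one soundly, provided you phrase it through the uniform lower bound on \(c_*\): since \(C_{\mathrm{LSI}}^A\) is the constant of the approximate measure \(\hat\nu_t\), it is that bound, not \(c_*\) itself, which is manifestly independent of \(M,\varepsilon,n\).
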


\begin{proof}
From Proposition~\ref{prop:nano} we have
\(C_{\mathrm{LSI}}^A\le C_Q/[d_-(k_0-\kappa_\Phi)]\). Substituting this into
\(c_*=(1-\widetilde q^{\,2})(8C_{\mathrm{LSI}}^A)^{-1}\) gives the claim.
\end{proof}

\begin{corollary}[Asymptotics of the entropy plateau]
\label{cor:entropy-plateau-asymptotics}
If \(\beta(t)\equiv1\), and \(\Delta_M\) denotes the combined statistical,
discretization, and denoising remainder of Theorem~\ref{thm:part2-main}, then
\[
        \Ent(\rho_t\|\hat\rho_t)
        \le
        \Ent(\rho_0\|\hat\rho_0)e^{-c_*t}+C\Delta_M/c_* .
\]
The time to reach the plateau of order \(\Delta_M\) is estimated as
\[
        t_{\mathrm{rel}}\asymp c_*^{-1}\log(\Ent(\rho_0\|\hat\rho_0)/\Delta_M).
\]
\end{corollary}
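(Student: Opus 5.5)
The plan is to specialize Step~5 of the proof of Theorem~\ref{thm:part2-main}, or equivalently Proposition~\ref{prop:entropy-closure-principle}, to \(\beta\equiv1\), keeping track of the \(c_*\)-dependence instead of absorbing it into \(C\). With \(H_t=\Ent(\rho_t\|\hat\rho_t)\), Steps~3--4 give
\[
        \dot H_t\le -c_*H_t+\tfrac14\Delta_M,
        \qquad
        c_*=\frac{1-\widetilde q^{\,2}}{8C_{\mathrm{LSI}}^A}.
\]
Here \(\Delta_M\) is the uniform-in-\(t\) remainder produced by whichever of Lemmas~\ref{lem:global-residual}, \ref{lem:nonparametric-residual}, \ref{lem:local-residual} applies, on the common event of probability at least \(1-\delta\).

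I would then apply the integrating factor \(e^{c_*t}\) and integrate over \([0,t]\). This gives
\[
        H_t\le H_0e^{-c_*t}+\tfrac14\Delta_M\int_0^te^{-c_*(t-s)}\,ds
        \le H_0e^{-c_*t}+\frac{\Delta_M}{4c_*},
\]
which is the first claim with \(C=1/4\). The only care needed is to make the differential inequality rigorous: \(H_t\) should be absolutely continuous. This is supplied by the chain rule for relative entropy in Assumption~\ref{ass:part1-base}. A Grönwall argument in integral form then avoids any differentiability issue.

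For the relaxation time, I would define \(t_{\mathrm{rel}}\) as the first time the transient term falls to the plateau level, that is, \(H_0e^{-c_*t}\le \Delta_M/c_*\). Solving gives \(t=c_*^{-1}\log(c_*H_0/\Delta_M)\). When \(H_0\gg\Delta_M\) and \(c_*\) is fixed, the \(\log c_*\) contribution is lower order, so \(t_{\mathrm{rel}}\asymp c_*^{-1}\log(H_0/\Delta_M)\).

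The matching lower side of \(\asymp\) is the one genuinely delicate point, since the statement is only an upper bound. I would interpret \(t_{\mathrm{rel}}\) as the time at which the bound reaches the plateau, not the true entropy. With that reading, the two-sided asymptotic follows from the explicit exponential: for \(t\le \tfrac12c_*^{-1}\log(H_0/\Delta_M)\) the bound still exceeds \(\sqrt{H_0\Delta_M}\gg\Delta_M\). I expect fixing this interpretation, rather than any computation, to be the main obstacle.
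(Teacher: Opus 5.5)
Your proposal is correct and follows the paper's argument. The paper gives no separate proof of this corollary: it is the case $\beta\equiv1$ of Step~5 in the proof of Theorem~\ref{thm:part2-main}, where $\frac14\int_0^t e^{-c_*(t-s)}\Delta_M\,ds\le \Delta_M/(4c_*)$ gives the plateau $C\Delta_M/c_*$, and solving $H_0e^{-c_*t}\approx\Delta_M$ gives $t_{\mathrm{rel}}$. If you take the threshold to be $\Delta_M$ itself rather than $\Delta_M/c_*$, you get $c_*^{-1}\log(H_0/\Delta_M)$ directly, without the $\log c_*$ correction you had to discuss.
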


\begin{corollary}[Joint grid--statistics limit]
\label{cor:joint-limit}
Let \(N,M\to\infty\), \(\varepsilon\to0\), and
\[        M\gg N^{\rho},\qquad r_n^2=o(1),\qquad \varepsilon=o(1). \]
Then the discrete backward model with the trained score field converges, in the
entropy sense, to the limiting backward dynamics with the effective
\(A_{\mathrm{eff}}\)-geometry. If, in addition, the forward-SDE convergence
corollary of Part~I holds, then the forward marginals also converge in the \(W_2\)
metric \cite{Villani}.
\end{corollary}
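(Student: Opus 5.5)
The plan is a triangle-inequality argument in relative entropy. We split the error between the discrete backward model and the limiting dynamics into a statistical part and a grid part, and control each with results already established.

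Fix the grid level $N$. Let $\hat\rho_t^{N,M}$ be the discrete backward model with the trained score field, $\rho_t^N$ the exact discrete backward density, and $\rho_t$ the limiting backward density in the $A_{\mathrm{eff}}$-geometry.

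\textbf{Statistical part.} For each fixed $N$ we apply Theorem~\ref{thm:part2-main} to the discrete model. Its hypotheses are checked with the discrete constants $c_{A,N},C_{A,N},C^A_{\mathrm{LSI},N}$. By Theorem~\ref{thm:DN-homogenization} and Proposition~\ref{prop:discrete-constants-rate}, these stay uniformly in $[c_0/2,2C_0]$ and have uniformly bounded LSI constant for $N$ large. Lemma~\ref{lem:numerical-residual} then gives $c_{*,N}\ge c_*-C\rho_N\ge c_*/2$. Hence
\[
\Ent(\rho^N_t\|\hat\rho^{N,M}_t)\le \Ent(\rho^N_0\|\hat\rho^{N,M}_0)e^{-c_*t/2}+C\Delta_{M,N},
\]
with $\Delta_{M,N}=C\bigl((\mathfrak D(\mathcal S_N)^2+\log(1/\delta))/M+\varepsilon+r_n^2\bigr)$. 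The complexity $\mathfrak D(\mathcal S_N)^2$ of the grid-level score class grows at most polynomially in $N$, say $\lesssim N^{\rho}$. For neural or spectral classes this follows from Theorem~\ref{thm:nn-entropy}, with width tied to $N$. This is exactly where the hypothesis $M\gg N^{\rho}$ enters: it forces $\Delta_{M,N}\to0$, and together with $\varepsilon,r_n^2=o(1)$ the whole statistical remainder vanishes.

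\textbf{Grid part.} We show $\rho_t^N\to\rho_t$, so that the discrete backward dynamics converge to the limiting one. The plan is to use the Mosco convergence of the discrete $A$-forms to the $A_{\mathrm{eff}}$-form. This comes from Theorem~\ref{thm:DN-homogenization}, which gives weak convergence of fluxes on cylindrical functions and preserves the two-sided bounds, combined with the Mosco stability of Part~I. Mosco convergence yields strong convergence of the semigroups and hence of the marginals. Lower semicontinuity of relative entropy, together with uniform LSI bounds and uniform entropy bounds along the sequence, then lets us pass to the limit, with the initial entropies also converging.

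\textbf{Combining and $W_2$.} Relative entropy obeys no triangle inequality, so we combine the two parts through Pinsker in total variation, or through lower semicontinuity: any limit point of $\hat\rho^{N,M}_t$ has zero entropy relative to $\rho_t$. For the forward marginals, the $W_2$ convergence follows directly from the cited forward-SDE corollary of Part~I. Alternatively it follows from Talagrand's inequality, a consequence of the uniform $A$-LSI, which bounds $W_2^2$ by $C\,\Ent$.

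\textbf{Main obstacle.} The hard step is the uniformity in $N$ of the statistical bound, namely that the constant $C$ and the complexity growth are controlled jointly so that $M\gg N^{\rho}$ suffices. I would first try to make $\rho$ explicit from the $LW^2\log$ entropy of Corollary~\ref{cor:nn-closure}, with $W$ polynomial in $N$. A secondary difficulty is identifying the entropy limit, which goes through the Mosco argument and needs only lower semicontinuity.
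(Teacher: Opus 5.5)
The paper states this corollary without proof. The intended derivation is evidently to combine Theorem~\ref{thm:part2-main}, Lemma~\ref{lem:numerical-residual} and Theorem~\ref{thm:DN-homogenization}, and your statistical part follows that route. What that route delivers, however, is convergence of the \emph{bound}: $c_{*,N}$ stays bounded below, and $\Delta_{M,N}\to0$ once $M\gg N^{\rho}$ dominates the complexity growth. Your upgrade to convergence of the densities has a genuine gap in the grid part. Theorem~\ref{thm:DN-homogenization} gives weak flux convergence only along the particular sequence $P_Nu$. By weak lower semicontinuity of the norm, this yields $\mathcal E_{\mathrm{eff}}(u)\le\liminf_N\mathcal E_N(P_Nu)$ and nothing more. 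Mosco convergence needs the liminf inequality along \emph{every} weakly convergent sequence. Above all, it needs strongly convergent recovery sequences with $\limsup_N\mathcal E_N(u_N)\le\mathcal E_{\mathrm{eff}}(u)$, the reverse inequality, and nothing in the paper supplies this. In a homogenization setting the unmodified sequence is typically not a recovery sequence. Its energies tend to the arithmetic-mean form $\langle D\rangle\succeq D_{\mathrm{eff}}$ (cf.\ $D_{\mathrm{cell}}\preceq\langle D\rangle$ in Section~\ref{subsec:cell-homogenization-comparison}), so correctors are required. Hence ``Mosco convergence, hence strong convergence of the semigroups'' is unsupported. Even granting it, convergence of the symmetric semigroups of the $A$-forms does not by itself control the backward marginals, whose generator carries the time-dependent score drift. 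Your lower-semicontinuity step only shows that limits obey the same entropy bound; it does not give $\rho^N_t\to\rho_t$.

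The combination step has a second gap. Your statistical bound retains $\Ent(\rho^N_0\|\hat\rho^{N,M}_0)e^{-c_*t/2}$. This is an initialization mismatch, independent of $M,\varepsilon,n$, and it does not vanish in the stated limit. So ``every limit point of $\hat\rho^{N,M}_t$ has zero entropy relative to $\rho_t$'' requires an extra hypothesis such as $\Ent(\rho^N_0\|\hat\rho^{N,M}_0)\to0$. Even then, Pinsker gives total-variation convergence, and joint lower semicontinuity only identifies weak limit points; neither is convergence ``in the entropy sense''. The uniformity you flag is also not available from the cited results:
\begin{itemize}
\item Theorem~\ref{thm:DN-homogenization} \emph{assumes} $c_0\le c_{A,N}$ and $C_{A,N}\le C_0$ rather than proving them.
\item Proposition~\ref{prop:discrete-constants-rate} needs an extra quantitative homogenization hypothesis. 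It also does not give the bound on $|C^A_{\mathrm{LSI},N}-C^A_{\mathrm{LSI}}|$ that Lemma~\ref{lem:numerical-residual} requires. A uniform lower bound on $c_{A,N}$ plus the discrete Bakry--\'Emery estimate would suffice for $c_{*,N}$, but it must be assumed.
\item The constant $\widetilde q$, and the quadratic-growth, identifiability and $\psi_1$ constants hidden in $C\Delta_{M,N}$, must be uniform in $N$ as well.
\end{itemize}
Finally, your Talagrand alternative bounds $W_2$ between the \emph{backward} marginals whose entropy you control, not the forward ones. The forward $W_2$ claim rests solely on the cited Part~I corollary.
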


\section{Conclusion to Part~II}

The second part turns the analytic estimate of Part~I into a statistically
verifiable theory: we build the score from noised data, prove the global empirical
closure, obtain lower bounds, and verify a nonlocal nanosystem application.
Together the two papers form a series: the first part provides the
functional-analytic foundation, the second the statistical and applied layer. The
lower bounds show that the statistical order is unimprovable within the chosen
trace-smoothed parametric class.

\paragraph{Code availability.}\mbox{}\\
Code is available at\\
\url{https://gitlab.com/andrew_shev/noncommutative-anisotropic-diffusion}.

% ==========================================================================
\bibliographystyle{unsrt}
\bibliography{references}

\end{document}